\newtheorem{theorem}{Theorem}[section]
\newaliascnt{lemma}{theorem}
\newtheorem{lemma}[lemma]{Lemma}
\newaliascnt{proposition}{theorem}
\newtheorem{proposition}[proposition]{Proposition}
\newaliascnt{corollary}{theorem}
\newtheorem{corollary}[corollary]{Corollary}
\newaliascnt{conjecture}{theorem}
\newcommand{\B}{{\mathbb B}}
\newcommand{\D}{{\mathbb D}}
\newcommand{\e}{\varepsilon}
\newcommand{\Id}{\operatorname{Id}}
\newcommand{\Ray}{\operatorname{Ray}}
\newcommand{\R}{{\mathbb R}}
\newcommand{\Rd}{{\R^d}}
\newcommand{\Robin}{{\text{Robin}}}
\newcommand{\Sp}{{\mathbb S}}
\newcommand{\tr}{\operatorname{tr}}
\begin{document}

\title[]{Sharp spectral bounds on starlike domains}

\author[]{R. S. Laugesen and B. A. Siudeja}
\address{Department of Mathematics, Univ.\ of Illinois, Urbana,
IL 61801, U.S.A.}
\email{Laugesen\@@illinois.edu}
\address{Department of Mathematics, Univ.\ of Oregon, Eugene,
OR 97403, U.S.A.}
\email{Siudeja\@@uoregon.edu}
\date{\today}

\keywords{Isoperimetric, membrane, convex, spectral zeta, heat trace, partition function,sloshing.}
\subjclass[2010]{\text{Primary 35P15. Secondary 35J20,52A40}}

\begin{abstract}
We prove sharp bounds on eigenvalues of the Laplacian that complement the Faber--Krahn and Luttinger inequalities. In particular, we prove that the ball maximizes the first eigenvalue and minimizes the spectral zeta function and heat trace. The normalization on the domain incorporates volume and a computable geometric factor that measures the deviation of the domain from roundness, in terms of moment of inertia and a support functional introduced by P\'{o}lya and Szeg\H{o}.

Additional functionals handled by our method include finite sums and products of eigenvalues. The results hold on convex and starlike domains, and for Dirichlet, Neumann or Robin boundary conditions.
\end{abstract}

\maketitle

\vspace*{-12pt}

\section{\bf Introduction}
\label{sec:intro}

How do eigenvalues of the Laplacian depend on the shape of the domain? We will obtain new  quantitative estimates on the eigenvalues in terms of explicitly computable geometric functionals. 

Write $\lambda_j$ for the Dirichlet eigenvalues of the Laplacian on the bounded domain $\Omega$ in $\Rd, d \geq 2$, with corresponding $L^2$-orthonormal eigenfunctions $u_j$, so that
\[
\begin{cases}
-\Delta u_j = \lambda_j u_j \;\; \text{in $\Omega$} \\
\hfill u_j = 0 \;\; \text{on $\partial \Omega$}
\end{cases}
\]
and
\[
0 < \lambda_1 < \lambda_2 \leq \lambda_3 \leq \dots .
\]
These eigenvalues represent physical quantities such as frequencies of vibration, rates of decay to equilibrium in diffusion, and energy levels of quantum particles. 

The problem of understanding how eigenvalues are affected by the shape of the domain, and of identifying domains that extremize eigenvalues, is long-standing and difficult. Numerous monographs and survey articles summarize the state of knowledge in euclidean space \cite{AB07,B80,BL08,He06,K85,K06}. Important results have been obtained on closed surfaces too, for example, see \cite{JLNNP05,SY94}.

Let us first describe our main result in the special case of $2$ dimensions (\autoref{Deigen2}). Later we extend to all dimensions (\autoref{Deigen}) and to Neumann and Robin analogues (\autoref{Neigen} and \autoref{Reigen}), and finally to the sloshing eigenvalues (\autoref{sec:sloshing}). 

Consider a starlike domain as in \autoref{fig:starlike} and define scale-invariant geometric factors
\begin{equation} \label{Gin2dim}
G_0 = \frac{1}{2\pi} \int_{\partial \Omega} \frac{1}{x \cdot N(x)} \, ds(x) , \qquad
G_1 = \frac{2\pi I_\text{origin}}{A^2} ,
\end{equation}
where $N(x)$ is the outward unit normal vector, $A$ is the area of $\Omega$, and $I_\text{origin}=\int_\Omega |x|^2 \, dA$ is the polar moment of inertia about the origin. Note that $x \cdot N(x)>0$ because the domain is starlike. In higher dimensions we will later define $G_0$ and $G_1$ differently, although the definitions will reduce to \eqref{Gin2dim} in $2$ dimensions. 

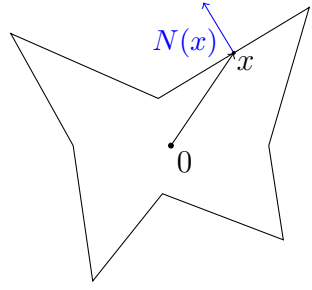
\begin{figure}
\begin{center}
\begin{tikzpicture}[scale=1.3]
  \draw (0,0) +(0:1) coordinate (g) -- +(45:2) coordinate (a) -- +(105:0.5) coordinate (b) -- +(145:2) coordinate (e) -- +(180:1) coordinate (f) -- +(240:1.6) coordinate (c) -- +(260:0.5) coordinate (d) -- +(320:1.5) -- +(0:1);

  \draw[black,->] (0,0) -- ($(a)!0.5!(b)$) coordinate (x) node [below right=-3pt] {$x$};
  \draw[blue,->] (x) -- ($(x)!0.6cm!90:(a)$) node[pos=0.5,below left=-4pt] {\small $N(x)$};
  \fill (x) circle (0.02);




  \fill (0,0) circle (0.03) node [below right=-2pt] {$0$};
\end{tikzpicture}
\end{center}
\caption{A starlike domain with outer normal $N(x)$.}
\label{fig:starlike}
\end{figure}

Let $G = \max \{ G_0,G_1 \}$. Then $G \geq 1$ for all starlike domains with equality for centered disks, by \autoref{le:leq1} below. Thus one may regard the value of $G$ as measuring the deviation of the domain from roundness. This deviation can arise in two ways: a highly oscillatory starlike boundary would make $G_0$ large, while an elongated boundary (such as an eccentric ellipse) would make $G_1$ large.

Now we can state the main result in the plane. We show that the disk maximizes eigenvalues of the Laplacian under suitable geometric scaling. 
\begin{theorem}[Dirichlet in $2$ dimensions] \label{Deigen2}
Suppose the function $R(\theta)$ is $2\pi$-periodic, positive, and Lipschitz continuous, and consider the starlike  domain $\Omega = \{ re^{i\theta} : 0 \leq r < R(\theta) \}$. Let $n \geq 1$. 

Then each of the following scale invariant eigenvalue functionals achieves its maximum value when the domain $\Omega$ is a centered disk:
\[
\lambda_1 A/G_0, \qquad \lambda_2 A/G_0, \qquad
(\lambda_1^s + \cdots + \lambda_n^s)^{1/s} \, A/G, \qquad
\sqrt[n]{\lambda_1 \lambda_2 \cdots \lambda_n} \, A/G,
\]
for each exponent $0 < s \leq 1$. Further, if $\Phi : \R_+ \to \R$ is concave and increasing then $\sum_{j=1}^n \Phi(\lambda_j A/G)$ is maximal when $\Omega$ is a disk centered at the origin

Hence the partial sums of the spectral zeta function and trace of the heat kernel are minimal when $\Omega$ is a centered disk. That is, the functionals
\[
\sum_{j=1}^n (\lambda_j A/G)^s \qquad \text{and} \qquad
\sum_{j=1}^n \exp(-\lambda_j At/G)
\]
attain their smallest value when $\Omega$ is a centered disk, for each $s<0<t$.
\end{theorem}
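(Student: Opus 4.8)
The plan is to use scale invariance to reduce every claim to a comparison with the unit disk $\D$, where $A=\pi$ and $G_0=G_1=G=1$: it suffices to show that each functional on an arbitrary starlike $\Omega=\{re^{i\theta}:r<R(\theta)\}$ is at most its value on $\D$. For trial functions I would push forward along the bi-Lipschitz radial transplantation $T\colon\D\to\Omega$, $T(\rho e^{i\theta})=\rho R(\theta)e^{i\theta}$. Writing $\rho=r/R(\theta)$, one checks that for a separated $w(\rho e^{i\theta})=f(\rho)h(\theta)$ with $f(1)=0$ the pushforward $v=w\circ T^{-1}\in H^1_0(\Omega)$ obeys $\int_\Omega v^2\,dA=\bigl(\int_0^{2\pi}h^2R^2\,d\theta\bigr)\bigl(\int_0^1 f^2\rho\,d\rho\bigr)$ and, when $f(0)=0$ or $h$ is constant so that a cross term drops, $\int_\Omega|\nabla v|^2\,dA=\bigl(\int_0^{2\pi}h^2(1+R'^2/R^2)\,d\theta\bigr)\bigl(\int_0^1 f'^2\rho\,d\rho\bigr)+\bigl(\int_0^{2\pi}h'^2\,d\theta\bigr)\bigl(\int_0^1 f^2/\rho\,d\rho\bigr)$. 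These are precisely the radial and angular pieces of the eigenfunctions $J_m(j_{m,k}\rho)\{\cos m\theta,\sin m\theta\}$ of $\D$.

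The first eigenvalue is then immediate. Taking $w=J_0(j_{0,1}\rho)$ gives $h\equiv1$, $\int_0^{2\pi}(1+R'^2/R^2)\,d\theta=2\pi G_0$, $\int_0^{2\pi}R^2\,d\theta=2A$, and radial ratio $\int_0^1 f'^2\rho\,d\rho/\int_0^1 f^2\rho\,d\rho=\lambda_1(\D)$; so the Rayleigh quotient of $v$ equals $(\pi G_0/A)\lambda_1(\D)$ and $\lambda_1(\Omega)A/G_0\le\pi\lambda_1(\D)$, the disk value. Equality forces $v$ to be the ground state of $\Omega$, and substituting its explicit transplanted form into $-\Delta v=\lambda_1 v$ forces $R$ constant.

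For the $n$-term functionals I would run the multidimensional min--max with the trial space spanned by the transplants of the first $n$ eigenfunctions of $\D$ (after a suitable rotation of $\Omega$): the function and gradient Gram matrices $M,D$ give $\lambda_j(\Omega)\le\mu_j$ for the generalized eigenvalues $\mu_1\le\cdots\le\mu_n$ of $(D,M)$, and the factorization above splits $M$ and $D$ into angular Gram factors times radial integrals. The angular parts of $D$ collapse through $\cos^2m\theta+\sin^2m\theta=1$ onto $\int_0^{2\pi}(R'^2/R^2)\,d\theta=2\pi(G_0-1)$, while the small angular masses $\int_0^{2\pi}h^2R^2\,d\theta$ in $M$ are compensated by the polar moment of inertia, hence by $G_1$; with $G=\max\{G_0,G_1\}$ this yields the family of bounds $\sum_{j=1}^k\lambda_j(\Omega)A/G\le\pi\sum_{j=1}^k\lambda_j(\D)$, $k=1,\dots,n$ --- a weak-majorization relation between the sorted normalized eigenvalue vectors. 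Since $x^s$ (for $0<s\le1$), $\log x$, and every concave increasing $\Phi$ are concave and increasing on $\R_+$, the standard Schur-concavity inequality then gives $\sum_j\Phi(\lambda_j(\Omega)A/G)\le\sum_j\Phi(\lambda_j(\D)\pi)$, and likewise $(\lambda_1^s+\cdots+\lambda_n^s)^{1/s}A/G$ and $\sqrt[n]{\lambda_1\cdots\lambda_n}\,A/G$ are maximal on the centered disk; equality propagates back through the Rayleigh and Cauchy--Schwarz (i.e.\ $G_0,G_1\ge1$) equality cases to $R$ constant. The sharper statement that $\lambda_2 A/G_0$ alone is maximal at the disk needs a refinement: keep the radial ground state of $\D$ in the trial space and subtract the Faber--Krahn bound $\mu_1\ge\lambda_1(\Omega)\ge(\pi/A)\lambda_1(\D)$ from the $k=2$ estimate, so that $G$ is replaced by $G_0$.

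The closing assertion now follows at once from the concave-increasing case: apply it with $\Phi(x)=-x^s$, which is concave and increasing on $\R_+$ exactly because $s<0$, and with $\Phi(x)=-\exp(-tx)$, which is concave and increasing exactly because $t>0$; then ``$\sum_j\Phi(\lambda_jA/G)$ is maximal when $\Omega$ is a centered disk'' says precisely that $\sum_j(\lambda_jA/G)^s$ and $\sum_j\exp(-\lambda_jAt/G)$ are minimal there. The main obstacle in this program is the bookkeeping behind the third paragraph --- arranging the trial space and the orientation of $\Omega$ so that the constant multiplying the disk eigenvalues is exactly $G$ (and only $G_0$ in the $\lambda_1$ and $\lambda_2$ cases) rather than the larger factor a crude bound on the angular integrals would give --- together with checking that the particular functionals in the statement, notably the concave ones (sums of powers with exponent at most $1$, and geometric means), are genuinely monotone under the weak-majorization relation the min--max produces, and not merely under termwise comparison.
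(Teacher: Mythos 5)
Your first paragraph and your treatment of $\lambda_1$ are exactly the P\'{o}lya--Szeg\H{o} argument, and they are correct; the closing reduction of the zeta/heat-trace statements to $\Phi(x)=-x^s$ and $\Phi(x)=-e^{-tx}$ is also fine. The gap is in the third paragraph, which is the heart of the theorem. Your map $T(\rho e^{i\theta})=\rho R(\theta)e^{i\theta}$ does not distort angles and therefore does not have constant Jacobian ($\mathrm{Jac}=R(\theta)^2$), so the transplanted disk eigenfunctions are not orthogonal in $L^2(\Omega)$. You correctly fall back on the generalized eigenvalue problem for the pencil $(D,M)$, but the quantity Rayleigh--Poincar\'{e} then controls is (after orthonormalizing) $\tr(M^{-1}D)$, not anything in which ``the angular parts of $D$ collapse through $\cos^2 m\theta+\sin^2 m\theta=1$.'' Both $M$ and $D$ are full matrices because every angular Gram entry carries the weight $R(\theta)^2$ or $1+R'^2/R^2$, and the assertion that the angular masses in $M$ ``are compensated by the polar moment of inertia, hence by $G_1$'' is exactly the step that cannot be carried out with this map. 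The paper's proof exists precisely to circumvent this: it builds a transformation that is linear on rays \emph{and} has constant Jacobian, by composing with a circle (sphere) homeomorphism $H$ satisfying $\mathrm{Jac}_H\propto R^d$ (\autoref{homexist}), so that $M$ is automatically a multiple of the identity; the price is that $DH$ enters the gradient, and the resulting terms are tamed by composing with an arbitrary rotation $U$ and averaging over $O(d)$ via Schur's lemma (\autoref{le:averaging}, \autoref{pr:Q23}), which is what produces the factors $G_0$ and $G_1$ exactly. A single ``suitable rotation of $\Omega$,'' as you propose, does not substitute for this average.

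Two smaller points. Your route to the $\lambda_2 A/G_0$ bound (keep the radial ground state in the trial space and subtract Faber--Krahn from the $k=2$ partial-sum estimate) does not produce $\lambda_2 A/G_0\le \pi\lambda_2(\D)$: an upper bound for $\lambda_1+\lambda_2$ minus a lower bound for $\lambda_1$ leaves mismatched constants in front of $\lambda_1(\D)$ and $\lambda_2(\D)$. The paper instead writes $\lambda_2 A/G_0=(\lambda_1 A/G_0)\cdot(\lambda_2/\lambda_1)$ and quotes the Ashbaugh--Benguria sharp PPW inequality for the second factor. Finally, once the partial-sum inequalities $\sum_{j\le k}\lambda_j(\Omega)A/G\le\pi\sum_{j\le k}\lambda_j(\D)$ are actually established, your majorization step (Hardy--Littlewood--P\'{o}lya for concave increasing $\Phi$) is the same as the paper's and is correct.
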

The theorem is better for the first and second eigenvalues than for other functionals, in the sense that we normalize $\lambda_1 A/G_0$ and $\lambda_2 A/G_0$ with the quantity $G_0$ instead of with $G$, where obviously $G_0 \leq G$ by definition. 

It is natural in the theorem to multiply $\lambda_j$ by $A$, because $\lambda_j$ scales like $1/A$. (Intuitively, low frequencies come from large drums.)

Note the result for $\lambda_2$ follows immediately from the one for $\lambda_1$, because the ratio $\lambda_2/\lambda_1$ is maximal on the disk by Ashbaugh and Benguria's ``sharp PPW inequality'' \cite{AB92}.   

The theorem improves on the standard ``inradius bounds'' for $\lambda_1$ and $\lambda_2$, on convex domains, as we now show. Write $\D_{in}$ for the largest open disk centered at the origin and contained in $\Omega$. Then $\lambda_j(\Omega) \leq \lambda_j(\D_{in})$ for all $j$, by domain monotonicity of the Dirichlet spectrum. \autoref{Deigen2} implies this inradius bound for $j=1,2$, as one checks by using that $A_{in} \leq A/G_0$ (\autoref{le:alt}); here $A_{in}$ is the area of $\D_{in}$.

Our theorem significantly extends the only known result of its type, which is the case ($n=1$) of the fundamental tone $\lambda_1$ with Dirichlet boundary conditions. That case is due to P\'{o}lya and Szeg\H{o} in $2$ dimensions and Freitas and Krej{\v{c}}i{\v{r}}{\'{\i}}k in higher dimensions, as explained after \autoref{Deigen}. 

To treat higher eigenvalues, we need a fundamentally new idea: we need to transform $\Omega$ into a disk while controlling  \emph{angular} information in the Rayleigh quotients of the eigenfunctions. Any such transformation will change the Rayleigh quotients substantially, and so we must devise a scheme for extracting the geometric effect and leaving behind the portion of the Rayleigh quotient that corresponds to the eigenfunction of the disk.

We construct a geometric transformation that maps linearly on rays and has constant Jacobian. As \autoref{fig:transplant} indicates, wherever the transformation stretches radially it must compress angularly. \autoref{sec:setup} gives the precise definition. We will extract the geometric contribution to the Rayleigh quotient of the trial function on $\Omega$ by composing the transplanted eigenfunction with an arbitrary orthogonal transformation $U$ of the ball and then averaging over all such $U$ (see \autoref{pr:Q23} and \autoref{Deigen_proof}). The constant Jacobian requirement is used here to guarantee that transplanting orthogonal eigenfunctions from the ball will yield orthogonal trial functions on $\Omega$.

\begin{figure}
\begin{center}
\begin{tikzpicture}[scale=1.5]
  \begin{scope}[xshift=5cm]
        \draw[red,->] (145:1.2) node[above=24pt] {rotation $U$} arc (145:90:1.2);
    \draw (0,0) circle (0.95);
    \fill (1.05,0.3) circle (0) node [right] {\small $u(x)$};
        \fill (1.05,-0.1) circle (0) node [right] {\small eigenfunction};
    \draw[loosely dashed] (0,0) circle (0.95*2/3);
    \draw[dashed] (0,0) circle (0.95/3);
    \draw [green!50!black] (0,0) -- (0.95,0);
    \begin{scope}
      \draw [green!50!black](0,0) -- (30:0.95) node [right=-2pt,pos=1,scale=1.2] {\tiny $1$};
      \clip (0,0) -- (30:0.95) -- (0.95,0) --cycle;
      \draw [green!50!black] (0,0) circle (0.57);
      \draw[green!50!black] (0.42,0.1) node[scale=1.2] {\tiny $\phi$};
    \end{scope}
    \fill (0,0) circle (0.03) node [below,scale=0.8] {\tiny $0$};
  \end{scope}
    \draw[->,blue] (1.3,0.4) .. controls (2.1,0.7) and (3,0.7) .. (3.8,0.4) node [above,pos=0.5] {$T$} node[below=5pt,pos=0.5] {\small linear on each ray,} node[below=14pt,pos=0.5] {\small area preserving};
  \begin{scope}[xscale=-1,xshift=-5.5cm]
    \draw[smooth] (4.5,0) .. controls (4.5,1) .. (5,1) .. controls (6,1) .. (6,0) .. controls (6,-1) .. (5,-1) .. controls (4.5,-1) .. (4.5,0);
    \draw[smooth,loosely dashed,xshift=-5cm,scale=2/3,xshift=10cm] (4.5,0) .. controls (4.5,1) .. (5,1) .. controls (6,1) .. (6,0) .. controls (6,-1) .. (5,-1) .. controls (4.5,-1) .. (4.5,0);
    \draw[smooth,dashed,xshift=-5cm,scale=1/3,xshift=25cm] (4.5,0) .. controls (4.5,1) .. (5,1) .. controls (6,1) .. (6,0) .. controls (6,-1) .. (5,-1) .. controls (4.5,-1) .. (4.5,0);
  \begin{scope}[xscale=-1,xshift=-10cm]
    \draw[green!50!black] (5,0) -- (5.5,0);
    \fill (5,0) circle (0.03) node [below] {\tiny $0$};
        \fill (2.4,0.3) circle (0) node [right] {\small $u \big( UT(x) \big)$};
         \fill (2.1,-0.1) circle (0) node [right] {\small trial function};
   \draw[green!50!black] (5,0) -- ++(60:0.95) node [right=-2pt,pos=1,scale=1.2] {\tiny $R(\theta)$};
    \clip (5,0) -- +(1,0) -- +(60:1) -- cycle;
    \draw[green!50!black] (5,0) circle (0.4);
    \draw[green!50!black] (5.22,0.1) node [scale=1.2] {\tiny $\theta$};
  \end{scope}
  \end{scope}
\end{tikzpicture}
\end{center}
   \vspace{-0.3cm}
\caption{A linear-on-rays transformation from a domain $\Omega$ of area $\pi$ to the unit disk. To insure that the mapping preserves area locally, we require $R(\theta)^2 \, d\theta = d\phi$.}
\label{fig:transplant}
\end{figure}
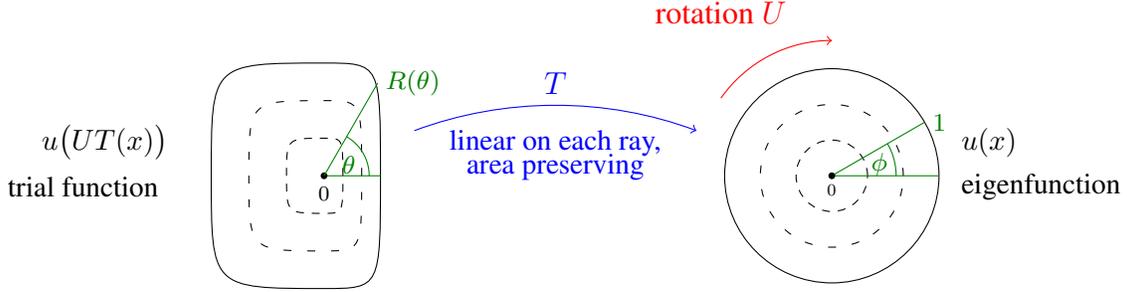

Note that P\'{o}lya and Szeg\H{o}'s result on the first eigenvalue was proved by a linear-on-rays transformation that does \emph{not} distort angles and hence does not preserve area. In other words, they took $\theta=\phi$ in \autoref{fig:transplant}. They also did not average over rotations. Instead they relied on the special fact that the first Dirichlet eigenfunction of the disk is radial.

\subsection*{Perturbations of the disk}
To make the theorem more concrete, we examine the case of nearly circular domains.
Suppose $P(\theta)$ is a Lipschitz continuous, $2\pi$-periodic function. Define a plane domain $\Omega_\e = \{ re^{i\theta} : 0 \leq r < 1+\e P(\theta) \}$, and assume $\e$ is small enough that the radius $1+\e P(\theta)$ is positive for all $\theta$. We may regard $\Omega_\e$ as a perturbation of the unit disk $\D$, since $\Omega_0=\D$. Let $j_{0,1} \simeq 2.4$ be the first zero of the Bessel function $J_0$, and recall that $\lambda_1(\D)=j_{0,1}^2$.
\begin{corollary}[Nearly circular domains] \label{perturb2}
The first eigenvalue of the domain $\Omega_\e$ is bounded above and below in terms of the boundary perturbation $P$:
\begin{align}
1 \leq \frac{\lambda_1(\Omega_\e) A(\Omega_\e)}{j_{0,1}^2 \pi}
& \leq 1 + \e^2 \int_0^{2\pi} \frac{P^\prime(\theta)^2}{\big( 1+\e P(\theta) \big)^2} \, \frac{d\theta}{2\pi} \label{eq:pertest} \\
& = 1 + \e^2 \left( \int_0^{2\pi} P^\prime(\theta)^2 \, \frac{d\theta}{2\pi} \right) + O(\e^3) \notag
\end{align}
as $\e \to 0$ with $P$ fixed. 

Eigenvalue sums satisfy a similar upper bound, for $n \geq 1$ and $s \in (0,1]$:
\begin{align*}
& \frac{\big(\sum_{j=1}^n \lambda_j(\Omega_\e)^s \big)^{\! 1/s} A(\Omega_\e)}{\big( \sum_{j=1}^n \lambda_j(\D)^s \big)^{\! 1/s} A(\D)} \\
& \leq \max \left\{ 1 + \int_0^{2\pi} \frac{\e^2 P^\prime(\theta)^2}{\big( 1+\e P(\theta) \big)^2} \, \frac{d\theta}{2\pi} \, , \frac{\int_0^{2\pi} \big( 1 + \e P(\theta)\big)^{\! 4} \, d\theta/2\pi}{\big[ \int_0^{2\pi} \big( 1 + \e P(\theta)\big)^{\! 2} \, d\theta/2\pi \big]^{\! 2}}\right\} \\
& = 1 + O(\e^2) .
\end{align*}
\end{corollary}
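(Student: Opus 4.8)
The plan is to reduce the whole corollary to \autoref{Deigen2}, applied to the domain $\Omega_\e$ (which is admissible because its radius $R(\theta)=1+\e P(\theta)$ is positive and Lipschitz once $\e$ is small and $P$ is Lipschitz), together with two short computations of the geometric factors $G_0$ and $G=\max\{G_0,G_1\}$ of $\Omega_\e$. Since a centered disk $\D$ has $\lambda_1(\D)=j_{0,1}^2$, $A(\D)=\pi$ and $G_0(\D)=G(\D)=1$, \autoref{Deigen2} gives at once
\[
\frac{\lambda_1(\Omega_\e)\,A(\Omega_\e)}{j_{0,1}^2\pi}\le G_0(\Omega_\e),
\qquad
\frac{\bigl(\sum_{j=1}^n\lambda_j(\Omega_\e)^s\bigr)^{1/s}A(\Omega_\e)}{\bigl(\sum_{j=1}^n\lambda_j(\D)^s\bigr)^{1/s}A(\D)}\le G(\Omega_\e).
\]
For the lower bound $1\le\lambda_1(\Omega_\e)A(\Omega_\e)/(j_{0,1}^2\pi)$ I would simply invoke the Faber--Krahn inequality: comparing $\Omega_\e$ with the disk of equal area and using that $\lambda_1$ scales like $1/A$ gives $\lambda_1(\Omega_\e)\ge j_{0,1}^2\pi/A(\Omega_\e)$. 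So everything reduces to evaluating $G_0(\Omega_\e)$ and $G_1(\Omega_\e)$ and expanding in $\e$.

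To compute $G_0$ I would parametrize $\partial\Omega$ by $\theta\mapsto R(\theta)e^{i\theta}$, so that $ds=\sqrt{R^2+(R')^2}\,d\theta$ and the elementary starlike identity $x\cdot N(x)=R^2/\sqrt{R^2+(R')^2}$ holds; hence $ds/(x\cdot N)=\bigl(1+(R')^2/R^2\bigr)\,d\theta$ and
\[
G_0=\frac{1}{2\pi}\int_0^{2\pi}\Bigl(1+\frac{R'(\theta)^2}{R(\theta)^2}\Bigr)d\theta
=1+\int_0^{2\pi}\frac{R'(\theta)^2}{R(\theta)^2}\,\frac{d\theta}{2\pi}.
\]
Putting $R=1+\e P$ and $R'=\e P'$ turns this into exactly the integral appearing in \eqref{eq:pertest}. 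To compute $G_1$ I would pass to polar coordinates: $A=\tfrac12\int_0^{2\pi}R^2\,d\theta$ and $I_{\text{origin}}=\int_0^{2\pi}\!\int_0^{R(\theta)}r^3\,dr\,d\theta=\tfrac14\int_0^{2\pi}R^4\,d\theta$, so that
\[
G_1=\frac{2\pi I_{\text{origin}}}{A^2}
=\frac{\int_0^{2\pi}R(\theta)^4\,d\theta/2\pi}{\bigl[\int_0^{2\pi}R(\theta)^2\,d\theta/2\pi\bigr]^2},
\]
which with $R=1+\e P$ is precisely the second entry of the maximum in the corollary. Combining the two evaluations identifies $G(\Omega_\e)$ with the stated right-hand side and so proves both upper bounds.

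The last step is the Taylor expansion. With $P$ fixed and $\e$ small, $1+\e P(\theta)$ is bounded away from $0$ uniformly in $\theta$, so $(1+\e P)^{-2}=1-2\e P+O(\e^2)$ uniformly; integrating the formula for $G_0(\Omega_\e)$ then gives $G_0(\Omega_\e)=1+\e^2\int_0^{2\pi}P'(\theta)^2\,d\theta/2\pi+O(\e^3)$, which is the displayed equality in \eqref{eq:pertest}, and in particular $G_0(\Omega_\e)=1+O(\e^2)$. An analogous second-order expansion of the ratio defining $G_1(\Omega_\e)$ gives $G_1(\Omega_\e)=1+O(\e^2)$ as well, so $\max\{G_0(\Omega_\e),G_1(\Omega_\e)\}=1+O(\e^2)$, which finishes the estimate. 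I do not expect a real obstacle here: the substance lives entirely in \autoref{Deigen2}, and the only point deserving care is the uniformity of the Taylor remainders, which is guaranteed by holding $P$ fixed as $\e\to0$.
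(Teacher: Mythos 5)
Your proposal is correct and follows exactly the route the paper intends: the upper bounds come from \autoref{Deigen2} with $R=1+\e P$ substituted into the formulas for $G_0$ and $G_1$ (your boundary-integral derivation of $G_0$ is precisely the content of \autoref{le:alt}/\autoref{le:G2dim}, and your polar-coordinate evaluation of $2\pi I_{\text{origin}}/A^2$ matches \eqref{eq:G2dim2}), while the lower bound is Faber--Krahn. The Taylor expansions are also handled correctly, so there is nothing to add.
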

The lower bound on the first eigenvalue in \eqref{eq:pertest} is the famous Faber--Krahn inequality. The upper bounds are immediate from \autoref{Deigen2}, by taking $R=1+\e P$ in the formula for $G_0$ in \autoref{le:G2dim} and remembering that our two definitions of $G_0$ agree in $2$ dimensions (\autoref{le:alt}).

The upper bound on the first eigenvalue in \eqref{eq:pertest} is equivalent to the estimate of P\'{o}lya and Szeg\H{o} \cite[pp.~14--15, 91--92]{PS51}. The upper bound on eigenvalue sums is new.

Let us compare the upper bound on the first eigenvalue with recent work of van den Berg \cite[Theorem~1(ii)]{vdB12}. He obtained an estimate of the form 
\begin{equation} \label{eq:vdB}
1 + C_2 \big( \lVert P \rVert_2 \lVert P^\prime \rVert_2 + \lVert P \rVert_2^2 \big) \e^2 + C_3 \lVert P^\prime \rVert_2^2 \, \e^3 ,
\end{equation}
for perturbations normalized by $\lVert P \rVert_\infty = 1$. Note his formula involves $\lVert P \rVert_2 \lVert P^\prime \rVert_2$ at second order whereas our corollary has $\lVert P^\prime \rVert_2^2$.  When applied to the main example in van den Berg's paper, his estimate beats our estimate \eqref{eq:pertest} by a factor of $\e^{1/2}$ because his $P$ depends on $\e$ with $\lVert P \rVert_2 = O(\e^{1/4})$ and $\lVert P^\prime \rVert_2 = O(\e^{-1/4})$. On the other hand, when applied to the ``uniform'' perturbation $P \equiv -1$, equality holds in our estimate \eqref{eq:pertest} whereas \eqref{eq:vdB} is not exact, due to the contribution of $\lVert P \rVert_2^2$.

Next let us contrast with Rayleigh's second order perturbation expansion of $\lambda_1$. See either his formal derivation \cite[{\S}210]{S94}, or P\'{o}lya and Szeg\H{o}'s account \cite[pp.~132--133]{PS51}, or Henry's more modern approach in all dimensions \cite[p.~35]{H05}. This expansion gives no error bounds, and it holds only for $P$ fixed with $\e$ tending to $0$. Both our corollary and van den Berg's work establish approximation bounds, and allow $P$ to vary with $\e$. Rayleigh's perturbation formula is better in one respect, though, because it gives an exact second order term ($\e^2$-term) for $\lambda_1 A$. This term behaves like $\sum |n| |\widehat{P}(n)|^2$, that is, like the square of the $H^{1/2}$-norm of the boundary perturbation.  In contrast, both our corollary and van den Berg's work have second order terms that are bigger, being controlled by the square of the $H^1$-norm, $\sum n^2 |\widehat{P}(n)|^2$.

\subsection*{Prior work, and the new methods} The idea of obtaining eigenvalue bounds by transforming a domain and averaging over rotations appeared already in Laugesen and Morpurgo's  $2$-dimensional conformal mapping approach \cite{LM98}. Their averaging task was much easier, though, since it needed only subharmonicity of the modulus of an analytic function. Further, their ``reciprocal eigenvalue'' results are inherently less powerful than the methods of this paper since, for example, they cannot yield the heat trace for all $t>0$.

More recently, sharp eigenvalue bounds on linear images of rotationally symmetric domains (such as regular polygons) were obtained by the authors and collaborators \cite{L12,LLR12,LS11b,LS11c}. For example, they showed that the centered equilateral triangle maximizes $(\lambda_1 + \cdots + \lambda_n)A/G_1$ among all triangles. The averaging in those papers takes place over discrete groups of rotations (such as $3$-fold rotations for triangles), and relies on ``tight frame'' identities which are special cases of Schur's Lemma from representation theory. The transformations in those papers are globally linear, and so are simpler than the linear-on-rays transformations constructed in this paper. This simplicity comes at the cost of a severely restricted class of image domains.

We must push beyond these existing averaging methods, in this paper. One serious obstacle is the nonlinear nature of our transformation, which causes the rotation matrix $U$ to appear multiple times in the transformed Rayleigh quotient, both inside and outside the derivative of the transformation. We describe how to overcome these obstacles in \autoref{Deigen_proof}.

\subsection*{Faber--Krahn and Luttinger bounds in the reverse direction} Rayleigh conjectured in 1877, and Faber and Krahn proved in the the 1920s, that
\[
\text{$\lambda_1 A$ is minimal for the disk.}
\]
Many proofs and extensions are known \cite{B80,BL08,K06,PS51}. The result holds trivially for the first Neumann eigenvalue $\mu_1$, which equals zero for each domain. The result holds also for the first Robin eigenvalue, assuming a positive Robin parameter, by work of Bossel and Daners \cite{B86,D06} that was improved to irregular domains by Bucur and Giacomini \cite{BG10}.

This Rayleigh--Faber--Krahn inequality does not extend to sums and products of eigenvalues, as the results in this paper do. It does extend to the spectral zeta function and heat trace, for $s<-1$ and $t>0$ respectively, each of them being maximal for the disk of the same area under Dirichlet boundary conditions. This important extension is due to Luttinger \cite{L73}, whose multiple integral rearrangement techniques proved remarkably fertile in joint work with Brascamp and Lieb \cite{BLL74}. Note that letting $t \to \infty$ in the heat trace inequality yields a new proof of the Faber--Krahn theorem. 

Luttinger's methods and results do \emph{not} extend to Neumann boundary conditions. In fact, for large $t$ the area-normalized Neumann heat trace equals approximately $1+e^{-\mu_2 At}$, which is minimal for the disk (not maximal) by Szeg\H{o}--Weinberger's result on the second Neumann eigenvalue \cite{W56}. One naturally conjectures that the Neumann heat trace is minimal for the disk of the same area, for each $t>0$. This problem remains open. For the analogous problem of the heat trace on the sphere, Morpurgo \cite{M96} has proved local minimality at the round metric. 

To put this paper in context, then, one may regard our results as being analogous to the classical Faber--Krahn and Luttinger results except with the direction of their inequalities reversed. Such reversal is made possible by introducing the geometric factor $G$ into the geometric scaling. Further, our theorems improve in three respects on the Faber--Krahn and Luttinger inequalities, because they hold: for finite sums and products of eigenvalues, for each \emph{partial sum} of the spectral zeta function and heat trace, and for Neumann and Robin boundary conditions in addition to Dirichlet.

\section{\bf The volume preserving transformation, and geometric factors}
\label{sec:setup}

Write $\B=\B^d$ for the unit ball centered at the origin, $\Sp=\Sp^{d-1}$ for the unit sphere, and $\R_+=(0,\infty)$ for the positive half-axis. 

We say a domain $\Omega$ in $\Rd$ is \emph{Lipschitz-starlike} if it can be expressed in the form
\[
\Omega = \{ r\xi : \xi \in \Sp, 0 \leq r < R(\xi) \}
\]
for some positive, Lipschitz continuous function $R(\xi)$ on $\Sp$. Call $R$ the \emph{radius function} of $\Omega$. The \emph{gauge function} is its reciprocal,
\[
\Gamma = \frac{1}{R} .
\]
Write $V$ for volume in $\Rd$. Notice $V(\Omega) = \frac{1}{d} \int_\Sp R(\xi)^d \, dS(\xi)$. 

\subsection*{The volume preserving (constant Jacobian) transformation} Our work relies on a map from the Lipschitz-starlike domain $\Omega$ to the ball that preserves volume locally (up to a scale factor), and is linear on each ray from the origin. See \autoref{fig:transplant} for an example in $2$ dimensions. First we construct a ``boundary homeomorphism'' $H$ associated with the radius function $R$ of $\Omega$. 
\begin{lemma} \label{homexist}
There exists a bi-Lipschitz homeomorphism $H : \Sp \to \Sp$ that distorts surface area in proportion to the $d$-th power of the radius function:
\begin{equation} \label{eq:distort}
\text{Jac}_H(\xi) = \frac{V(\B)}{V(\Omega)} R(\xi)^d .
\end{equation}
\end{lemma}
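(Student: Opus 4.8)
The plan is to recognize this as a prescribed-Jacobian problem on the sphere and to solve it by an explicit construction, inducting on the dimension. Set $f(\xi) = \dfrac{V(\B)}{V(\Omega)}\, R(\xi)^d$; since $R$ is positive and Lipschitz on the compact sphere, $f$ is Lipschitz and pinched between two positive constants, and we must produce a bi-Lipschitz self-homeomorphism of $\Sp$ whose Jacobian is $f$. The only obstruction one expects is a total-mass condition, and it holds:
\[
\int_\Sp f\, dS = \frac{V(\B)}{V(\Omega)}\int_\Sp R^d\, dS = \frac{V(\B)}{V(\Omega)}\,d\,V(\Omega) = d\,V(\B) = |\Sp^{d-1}| ,
\]
using $V(\Omega) = \tfrac1d\int_\Sp R^d\,dS$ and $V(\B) = |\Sp^{d-1}|/d$, where $|\Sp^{k}|$ denotes the surface area of the unit $k$-sphere.

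First, the base case $d = 2$. Identify $\Sp^1$ with $\R/2\pi\Z$ and integrate: $\phi(\theta) = \int_0^\theta f(t)\,dt$ is an increasing bijection of $\R/2\pi\Z$ (the mass identity makes it carry $2\pi$ to $2\pi$), its derivative is $f$, and since $f$ is pinched between positive constants both $\phi$ and $\phi^{-1}$ are Lipschitz. This is precisely the normalization $R(\theta)^2\,d\theta = d\phi$ of \autoref{fig:transplant}.

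Next, the inductive step $d\ge 3$. Write $\xi = \big((\sin\beta)\,\omega,\cos\beta\big)$ with $\beta\in[0,\pi]$ and $\omega\in\Sp^{d-2}$, so that surface measure factors as $dS = (\sin\beta)^{d-2}\,d\beta\,dS_{d-2}(\omega)$. Let $m(\beta) = \int_{\Sp^{d-2}} f\big((\sin\beta)\omega,\cos\beta\big)\,dS_{d-2}(\omega)$ be the longitude-integrated density, and define the increasing bijection $\Theta$ of $[0,\pi]$ by $|\Sp^{d-2}|\int_0^{\Theta(\beta)}(\sin t)^{d-2}\,dt = \int_0^\beta m(t)(\sin t)^{d-2}\,dt$ (both sides run from $0$ to $|\Sp^{d-1}|$ by the mass identity). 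For fixed $\beta$ the conditional density $g_\beta(\omega) = |\Sp^{d-2}|\,f\big((\sin\beta)\omega,\cos\beta\big)/m(\beta)$ is Lipschitz on $\Sp^{d-2}$, pinched between positive constants uniformly in $\beta$, and of total mass $|\Sp^{d-2}|$; so the inductive hypothesis supplies a bi-Lipschitz $\Psi_\beta:\Sp^{d-2}\to\Sp^{d-2}$ with $\text{Jac}_{\Psi_\beta} = g_\beta$. Put $H(\xi) = \big((\sin\Theta(\beta))\,\Psi_\beta(\omega),\cos\Theta(\beta)\big)$. This map is triangular in the $(\beta,\omega)$ variables, so in view of the measure factorization its Jacobian is $\dfrac{(\sin\Theta(\beta))^{d-2}\,\Theta'(\beta)}{(\sin\beta)^{d-2}}\,\text{Jac}_{\Psi_\beta}(\omega) = \dfrac{m(\beta)}{|\Sp^{d-2}|}\cdot g_\beta(\omega) = f(\xi)$, using the defining relation for $\Theta$ in the first factor.

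The hard part is the bi-Lipschitz bookkeeping, in three respects: (i) $\Theta$ must be bi-Lipschitz up to the degenerate poles $\beta=0,\pi$, which works because $m(\beta)\to|\Sp^{d-2}|\,f(\text{pole})$ forces $\Theta(\beta)\sim f(\text{pole})^{1/(d-1)}\beta$ near the north pole, and symmetrically at the south; (ii) the inductive bi-Lipschitz constants for $\Psi_\beta$ must be uniform in $\beta$, which holds because the Lipschitz and pinching bounds on $g_\beta$ are; and, most delicately, (iii) $\Psi_\beta(\omega)$ must depend Lipschitz-continuously on $\beta$ as well as on $\omega$, so that $H$ is Lipschitz on all of $\Sp^{d-1}$ and not merely on each latitude sphere. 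Item (iii) is settled by verifying that the explicit construction depends Lipschitz-continuously on its data ($f$ is jointly Lipschitz, hence so are $\beta\mapsto m(\beta)$ and $\beta\mapsto g_\beta$, and every rearrangement step inherits this); a bi-Lipschitz bijection is then automatically a homeomorphism. A coordinate-free alternative is Moser's deformation argument: connect $dS$ to $f\,dS$ by the equal-mass path $\omega_t = ((1-t)+tf)\,dS$, solve $d\eta_t = (1-f)\,dS$ for a $(d-2)$-form $\eta_t$ by Hodge theory (the right side is exact by the mass identity), let $X_t$ satisfy $\iota_{X_t}\omega_t = \eta_t$, and take $H$ to be the inverse of the time-$1$ flow of $X_t$; with Lipschitz data $\eta_t$ is of class $C^{1,\alpha}$, so $X_t$ is Lipschitz in space and its flow is bi-Lipschitz.
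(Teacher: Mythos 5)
Your construction is essentially the paper's own: the same one-dimensional integration on the circle as base case, the same latitude--longitude splitting with a mass-matched polar map $\Theta$ and a conditional-density rearrangement on each latitude sphere as the inductive step, and even the same Dacorogna--Moser deformation argument offered as an alternative. You carry out the bi-Lipschitz bookkeeping (pole behavior, uniformity in $\beta$, joint Lipschitz dependence) that the paper explicitly leaves to the reader, so the proposal is correct and, if anything, more complete.
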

We prove the lemma in \autoref{sec:Hexist}. Simply note at this stage that the left and right sides of \eqref{eq:distort} both integrate over $\Sp$ to yield $|\Sp|$. 

Now define the mapping $T : \Omega \to \B$ by mapping linearly in each direction and transforming directions with $H$; that is, define 
\begin{equation} \label{eq:Tdef}
T(r\xi) = \frac{r}{R(\xi)} H(\xi)
\end{equation}
for vectors $\xi \in \Sp$ and numbers $r \in [0,R(\xi))$. One can check that $T$ is a bi-Lipschitz homeomorphism of $\Omega$ to $\B$. Its Jacobian determinant is constant, with
\[
\text{Jac}(T) \equiv V(\B)/V(\Omega) , \qquad
\text{Jac}(T^{-1}) \equiv V(\Omega)/V(\B) ,
\]
as one deduces from the definition \eqref{eq:Tdef} and Jacobian formula \eqref{eq:distort}. 

The constant Jacobian property of $T$ will be essential later, when we transplant an orthonormal collection of eigenfunctions on the ball to a collection of functions on $\Omega$. The transplanted functions will remain  orthogonal, thanks to the constant Jacobian condition, and so we can use them as trial functions in the Rayleigh principle for the eigenvalue sum. 

%
%

\subsection*{The geometric factors} From now on, we extend $R$ and $H$ by homogeneity to be defined not just on the unit sphere but on all nonzero vectors:
\[
R(r\xi) = R(\xi), \qquad H(r\xi) = H(\xi),
\]
for all $r>0$. Thus it makes sense to speak of the gradient vector $\nabla R$, and the derivative matrix $DH$.

Given a real matrix $M$, write its Hilbert--Schmidt norm as
\[
\lVert M \rVert_{HS}=\big( \sum_{j,k} M_{jk}^2 \big)^{\! 1/2} = (\tr M^\dagger M)^{1/2} ,
\]
where $M^\dagger$ denotes the transposed matrix. All matrices in this paper will be real.

Now define two geometric quantities
\begin{align}
G_0(\Omega) & =  \frac{\frac{1}{|\Sp|} \int_\Sp \big[ R(\xi)^{d-2} + \big|\nabla R(\xi) \big|^2 R(\xi)^{d-4} \big] \, dS(\xi)}{\Big( \frac{1}{|\Sp|} \int_\Sp R(\xi)^d \, dS(\xi) \Big)^{\! (d-2)/d} } , \label{eq:G0def} \\
G_1(\Omega) & =   \frac{\frac{1}{|\Sp|} \int_\Sp \frac{\lVert DH(\xi) \rVert_{HS}^2}{d-1} R(\xi)^{d-2} \, dS(\xi)}{\Big( \frac{1}{|\Sp|} \int_\Sp R(\xi)^d \, dS(\xi) \Big)^{\! (d-2)/d} } . \label{eq:G1def}
\end{align}
Clearly $G_0$ and $G_1$ are scale invariant, meaning that $G_i(\Omega)=G_i(a\Omega)$ for all $a>0$, since $a\Omega$ has radius function $aR$.

Alternative formulas for $G_0$ and $G_1$ of a geometric nature will be developed in \autoref{sec:understanding}. There we express $G_0$ in terms of a support-type integral over the boundary that was employed previously by P\'{o}lya and Szeg\H{o}, and we show in two dimensions that $G_1=2\pi I_\text{origin}/A^2$. Thus these alternative formulas recover the definitions \eqref{Gin2dim} that we used in the plane, and show that in $2$ dimensions, both $G_0$ and $G_1$ depend only on the \emph{shape} of $\Omega$ and on the choice of origin. 

In higher dimensions, $G_1$ depends also on the choice of homeomorphism $H$.

\smallskip
\emph{Example.} If $\Omega$ is a centered ball one has $R \equiv \text{const.}$, so that $G_0=1$. By convention, for a centered ball we choose the homeomorphism $H$ to be the identity on the sphere, so that  $G_1=1$.
\begin{lemma} \label{le:leq1}
The geometric quantities are always at least $1$ in value:
\[
G_0 \geq 1 \qquad \text{and} \qquad G_1 \geq 1 .
\]
Equality statement: $G_0=1$ if and only if $\Omega$ is a centered ball, and $G_1=1$ if and only if $\Omega$ is a centered ball and $H$ is an orthogonal transformation of the sphere.
\end{lemma}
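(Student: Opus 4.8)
The strategy is to prove each inequality separately by recognizing its numerator as an average of a quantity that dominates the relevant power mean of $R$, and then invoking a Hölder/Jensen inequality together with the power-mean inequality against the normalizing denominator. For $G_0\ge 1$: the numerator is $\frac{1}{|\Sp|}\int_\Sp \bigl[R^{d-2}+|\nabla R|^2 R^{d-4}\bigr]\,dS \ge \frac{1}{|\Sp|}\int_\Sp R^{d-2}\,dS$ since $|\nabla R|^2 R^{d-4}\ge 0$. By the power-mean inequality applied to the positive function $R$ with exponents $d-2\le d$ (and $d\ge 2$),
\[
\Bigl(\tfrac{1}{|\Sp|}\int_\Sp R^{d-2}\,dS\Bigr)^{1/(d-2)} \le \Bigl(\tfrac{1}{|\Sp|}\int_\Sp R^{d}\,dS\Bigr)^{1/d},
\]
which rearranges exactly to $\frac{1}{|\Sp|}\int_\Sp R^{d-2}\,dS \ge \bigl(\frac{1}{|\Sp|}\int_\Sp R^{d}\,dS\bigr)^{(d-2)/d}$, i.e.\ $G_0\ge 1$. (The case $d=2$ is trivial since the numerator is $\ge 1$ and the denominator is $1$.) The equality analysis: we need $|\nabla R|\equiv 0$ a.e.\ and equality in the power-mean inequality, both of which force $R$ to be constant, i.e.\ $\Omega$ a centered ball; conversely a centered ball gives $G_0=1$ directly.

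For $G_1\ge 1$: here the key pointwise input is a lower bound on $\lVert DH\rVert_{HS}^2$ in terms of the Jacobian. Since $H:\Sp\to\Sp$, at each point $DH$ maps the $(d-1)$-dimensional tangent space to $\Sp$ into itself (modulo the radial direction, on which $DH$ acts appropriately given the homogeneous extension); by the arithmetic–geometric mean inequality applied to the singular values $\sigma_1,\dots,\sigma_{d-1}$ of the tangential part of $DH$, one has $\frac{1}{d-1}\lVert DH\rVert_{HS}^2 = \frac{1}{d-1}\sum \sigma_k^2 \ge \bigl(\prod \sigma_k^2\bigr)^{1/(d-1)} = (\mathrm{Jac}_H)^{2/(d-1)}$. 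Plugging in the formula $\mathrm{Jac}_H = \bigl(V(\B)/V(\Omega)\bigr)R^d$ from \autoref{homexist}, the numerator of $G_1$ is bounded below by
\[
\Bigl(\tfrac{V(\B)}{V(\Omega)}\Bigr)^{2/(d-1)} \tfrac{1}{|\Sp|}\int_\Sp R^{2d/(d-1)}\,R^{d-2}\,dS = \Bigl(\tfrac{V(\B)}{V(\Omega)}\Bigr)^{2/(d-1)} \tfrac{1}{|\Sp|}\int_\Sp R^{(d^2)/(d-1)}\,dS,
\]
and now one applies the power-mean inequality (exponent $d^2/(d-1)\ge d$) plus the identity $V(\Omega)=\frac{1}{d}\int_\Sp R^d\,dS$ and $V(\B)=|\Sp|/d$ to reduce everything to the claimed bound $\ge \bigl(\frac{1}{|\Sp|}\int R^d\,dS\bigr)^{(d-2)/d}$, i.e.\ $G_1\ge 1$. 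Equality requires $\sigma_1=\cdots=\sigma_{d-1}$ at each point (so $DH$ is a conformal, hence by the constant-Jacobian normalization an \emph{orthogonal}, map of the tangent space) together with equality in the power mean, forcing $R\equiv$ const; conversely $R$ constant with $H$ orthogonal gives $G_1=1$ by the convention in the Example.

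The main obstacle I expect is the careful bookkeeping in the $G_1$ equality case: translating "$DH$ conformal with constant Jacobian at every point" into "$H$ is (the restriction of) an orthogonal transformation of $\Sp$." This is a rigidity statement — a diffeomorphism of the round sphere whose differential is everywhere an isometry must be a global isometry, and one must check that under the bi-Lipschitz (a priori only a.e.\ differentiable) hypothesis of \autoref{homexist} this conclusion still holds, e.g.\ by showing $H$ is $1$-Lipschitz in both directions hence distance-preserving. A secondary subtlety is being precise about how the homogeneous extension of $R$ and $H$ interacts with $DH$ and $\nabla R$ — specifically that $\nabla R$ is purely tangential and that the $d\times d$ matrix $DH$ restricted to the sphere direction has the stated singular-value structure — which must be spelled out so that the AM–GM step is legitimate.
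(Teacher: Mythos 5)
Your treatment of $G_1$ is essentially the paper's own proof: the quadratic--geometric mean inequality on the singular values of $DH$, substitution of the Jacobian formula \eqref{eq:distort}, and Jensen's inequality, with the equality case reduced to ``$DH$ conformal with unit Jacobian.'' (Two small points: the exponent should be $2d/(d-1)+d-2=(d^2-d+2)/(d-1)$, not $d^2/(d-1)$; and the rigidity step you flag as the main obstacle is settled in the paper by invoking Liouville's theorem for $d\ge 3$, orientation/Jacobian considerations sufficing when $d=2$.)

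The $G_0$ part, however, has a genuine gap: your power-mean inequality points the wrong way. Since $d-2<d$, the inequality
\[
\Bigl(\tfrac{1}{|\Sp|}\int_\Sp R^{d-2}\,dS\Bigr)^{\!1/(d-2)} \le \Bigl(\tfrac{1}{|\Sp|}\int_\Sp R^{d}\,dS\Bigr)^{\!1/d}
\]
rearranges (for $d\ge 3$) to $\frac{1}{|\Sp|}\int_\Sp R^{d-2}\,dS \le \bigl(\frac{1}{|\Sp|}\int_\Sp R^{d}\,dS\bigr)^{(d-2)/d}$ --- the reverse of what you assert. Discarding the gradient term therefore cannot work: in $d=3$, taking $R$ near a function equal to $1$ and $2$ on two hemispheres gives first term $\approx 3/2$ while the denominator is $\approx (9/2)^{1/3}>3/2$; only the gradient contribution from the transition region restores the inequality. (Your observation that $d=2$ is trivial is correct, but that case is not the issue.) The paper takes a different route that uses the gradient term essentially: by \autoref{le:alt} the numerator combination $R^{d-2}+|\nabla R|^2R^{d-4}$ is exactly the density of $dS(x)/(x\cdot N(x))$ on $\partial\Omega$, so $G_0$ is a support-type boundary functional; Cauchy--Schwarz against $\int_{\partial\Omega}x\cdot N\,dS=d\,V(\Omega)$ gives $G_0\ge G_\Robin$, and the isoperimetric inequality gives $G_\Robin\ge 1$. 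The equality statement ($x\cdot N$ constant from Cauchy--Schwarz, ball from isoperimetric equality) also falls out of that argument, whereas your equality analysis rests on the reversed power-mean step and does not go through. You need to either adopt the boundary-integral route or find another way to exploit the $|\nabla R|^2R^{d-4}$ term.
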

This lemma helps us interpret the main theorem below. We do not otherwise need the lemma though, and so we defer its proof to \autoref{sec:understanding}.

\section{\bf Main results}
\label{sec:main}

First we extend the eigenvalues estimates in \autoref{Deigen2} to all dimensions. Let
\[
G = \max \{ G_0 , G_1 \} .
\]
\begin{theorem}[Dirichlet] \label{Deigen}
Assume $\Omega$ is a Lipschitz-starlike domain in $\Rd, d \geq 2$, and let $n \geq 1$. Then the scale invariant eigenvalue functionals
\[
\lambda_1 V^{2/d}/G_0, \quad \lambda_2 V^{2/d}/G_0, \quad
(\lambda_1^s + \cdots + \lambda_n^s)^{1/s} \, V^{2/d}/G, \quad
\sqrt[n]{\lambda_1 \lambda_2 \cdots \lambda_n} \, V^{2/d}/G,
\]
are maximal when $\Omega$ is a centered ball, for each exponent $0 < s \leq 1$. Further, if $\Phi : \R_+ \to \R$ is concave and increasing then $\sum_{j=1}^n \Phi(\lambda_j V^{2/d}/G)$ is maximal when $\Omega$ is a centered ball. Hence for $s<0< t$ the functionals
\[
\sum_{j=1}^n (\lambda_j V^{2/d} / G)^s \qquad \text{and}  \qquad
\sum_{j=1}^n \exp(-\lambda_j V^{2/d} \, t/G) 
\]
are minimal when $\Omega$ is a centered ball.  

Equality statement for the first eigenvalue: if $\lambda_1 V^{2/d} /G_0 \big|_\Omega = \lambda_1 V^{2/d} /G_0 \big|_\B$ and  $R$ is $C^2$-smooth then $\Omega$ is a centered ball.
\end{theorem}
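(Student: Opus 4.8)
The plan is to transplant eigenfunctions of the ball to trial functions on $\Omega$ using the volume-preserving map $T$ of \eqref{eq:Tdef}, precompose with an arbitrary rotation, and average over the rotation group. Fix $n$, and let $u_1,\dots,u_n$ be $L^2(\B)$-orthonormal Dirichlet eigenfunctions of $\B$, with $u_1$ radial. For $U\in O(d)$ set $v_j^U=u_j\circ U\circ T\in H^1_0(\Omega)$. Since $\operatorname{Jac}(T)$ is constant and $U$ is orthogonal, the substitution $y=T(x)$ gives $\int_\Omega v_i^U v_j^U\,dx=\tfrac{V(\Omega)}{V(\B)}\delta_{ij}$, so on the $n$-dimensional span of the $v_j^U$ the $L^2$ form is a fixed multiple of the identity. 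Writing $Q(U)_{ij}=\tfrac{V(\B)}{V(\Omega)}\int_\Omega\nabla v_i^U\cdot\nabla v_j^U\,dx$ for the associated normalized Rayleigh matrix, with eigenvalues $q_1(U)\le\dots\le q_n(U)$, the Courant--Fischer min-max principle gives $\lambda_j(\Omega)\le q_j(U)$ for every $j$ and every $U$.

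The heart of the argument, which I expect to be the main obstacle, is the $U$-average of the diagonal entries of $Q(U)$. By the chain rule $\nabla v_j^U(x)=DT(x)^\dagger U^\dagger(\nabla u_j)(UT(x))$, whence $\lvert\nabla v_j^U(x)\rvert^2=(\nabla u_j)(UT(x))^\dagger\,U\,A(x)\,U^\dagger(\nabla u_j)(UT(x))$ with $A=DT\,DT^\dagger$. Since $T$ is linear on rays, $A$ is constant along rays; the substitutions $y=T(x)$ and $z=Uy$ then turn $\int_{O(d)}Q(U)_{jj}\,dU$ into $\int_\B(\nabla u_j)(z)^\dagger M(z/\lvert z\rvert)(\nabla u_j)(z)\,dz$, where $M(\eta)=\int_{O(d)}U\,\widetilde A(U^\dagger\eta)\,U^\dagger\,dU$ and $\widetilde A(\eta)=A(H^{-1}\eta)$. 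Because $M(\eta)$ commutes with the stabilizer $O(d-1)$ of $\eta$ and transforms equivariantly under $O(d)$, Schur's lemma forces $M(\eta)=\alpha\,\eta\eta^\dagger+\beta(\Id-\eta\eta^\dagger)$ with $\alpha,\beta\ge0$ independent of $\eta$, where $\alpha$ is the spherical average of $\langle\widetilde A(\zeta)\zeta,\zeta\rangle$ and $\alpha+(d-1)\beta$ that of $\tr\widetilde A(\zeta)$. Changing variables back through $H$ (whose Jacobian is $\tfrac{V(\B)}{V(\Omega)}R^d$ by \autoref{homexist}) and computing $DT$ in radial and tangential components, these two averages collapse exactly to the numerators of \eqref{eq:G0def} and \eqref{eq:G1def}, so $\alpha=G_0\,(V(\B)/V(\Omega))^{2/d}$ and $\beta=G_1\,(V(\B)/V(\Omega))^{2/d}$. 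Splitting $\lvert\nabla u_j\rvert^2$ into its radial part (fraction $\kappa_j:=\int_\B\lvert\partial_\rho u_j\rvert^2/\lambda_j(\B)\in[0,1]$) and its tangential part then yields
\[
\int_{O(d)}Q(U)_{jj}\,dU=\big(V(\B)/V(\Omega)\big)^{2/d}\big(G_0\kappa_j+G_1(1-\kappa_j)\big)\lambda_j(\B)\le\big(V(\B)/V(\Omega)\big)^{2/d}\,G\,\lambda_j(\B),
\]
with $\kappa_1=1$ because $u_1$ is radial. The delicate point is disentangling the rotation $U$, which appears simultaneously conjugating $A$ and inside its argument, and justifying the changes of variables given that $H$ (from \autoref{homexist}) is only bi-Lipschitz.

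Granting this, the rest is assembly. Let $\Phi:\R_+\to\R$ be concave and increasing and $c>0$. From $\lambda_j(\Omega)\le q_j(U)$ and monotonicity, $\sum_j\Phi(c\lambda_j(\Omega))\le\sum_j\Phi(c\,q_j(U))$; by Schur's majorization inequality (the diagonal of $Q(U)$ is majorized by its spectrum, and $x\mapsto\Phi(cx)$ is concave) this is $\le\sum_j\Phi(c\,Q(U)_{jj})$; averaging over $U$ and applying Jensen's inequality for the concave $\Phi(c\,\cdot\,)$ gives $\le\sum_j\Phi\big(c\int_{O(d)}Q(U)_{jj}\,dU\big)$; the display above and monotonicity then give $\le\sum_j\Phi\big(c\,(V(\B)/V(\Omega))^{2/d}G\,\lambda_j(\B)\big)$. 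Choosing $c=V(\Omega)^{2/d}/G$ produces the master inequality $\sum_{j=1}^n\Phi\big(\lambda_j(\Omega)V(\Omega)^{2/d}/G\big)\le\sum_{j=1}^n\Phi\big(\lambda_j(\B)V(\B)^{2/d}\big)$. Each listed functional is a special case: $\Phi(x)=x^s$, $0<s\le1$ (then raise to the power $1/s$), for the sum $(\lambda_1^s+\dots+\lambda_n^s)^{1/s}$; $\Phi(x)=\log x$ for the geometric mean; and $\Phi(x)=-x^s$ with $s<0$, respectively $\Phi(x)=-e^{-tx}$ with $t>0$ fixed, which after negating give that the spectral-zeta and heat-trace partial sums are minimized at the ball. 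For $\lambda_1$ alone the single trial function $u_1\circ T$ (insensitive to the choice of $H$ since $u_1$ is radial) has Rayleigh quotient exactly $(V(\B)/V(\Omega))^{2/d}G_0\lambda_1(\B)$ — the $\kappa_1=1$ computation — so $\lambda_1 V^{2/d}/G_0\le\lambda_1(\B)V(\B)^{2/d}$; multiplying through by the ratio $\lambda_2/\lambda_1$, largest for the ball by the sharp PPW inequality of Ashbaugh and Benguria \cite{AB92}, gives the bound for $\lambda_2 V^{2/d}/G_0$.

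For the equality statement, suppose $\lambda_1(\Omega)V(\Omega)^{2/d}/G_0=\lambda_1(\B)V(\B)^{2/d}$ and $R\in C^2$. Then $v_1=u_1\circ T$ equals $f(\gamma)$, where $u_1(y)=f(\lvert y\rvert)$ and $\gamma(x)=\lvert x\rvert\,\Gamma(x/\lvert x\rvert)$ is the gauge function, which is $C^2$ on $\Rd\setminus\{0\}$; by the computation above $v_1$ achieves the Rayleigh minimum, hence is a first eigenfunction, so $-\Delta v_1=\lambda_1(\Omega)v_1$ on $\Omega\setminus\{0\}$ (using interior elliptic regularity). Writing $\Delta f(\gamma)=f''(\gamma)\lvert\nabla\gamma\rvert^2+f'(\gamma)\Delta\gamma$ and eliminating $f''$ through the radial eigenvalue ODE $f''+\tfrac{d-1}{\rho}f'=-\lambda_1(\B)f$, the equation becomes
\[
\lambda_1(\B)\big(\lvert\nabla\gamma\rvert^2-\mu\big)f(\gamma)+\big(\tfrac{d-1}{\gamma}\lvert\nabla\gamma\rvert^2-\Delta\gamma\big)f'(\gamma)=0,\qquad\mu:=\lambda_1(\Omega)/\lambda_1(\B).
\]
By homogeneity, along the ray in direction $\xi$ this reads $c_1(\xi)f(\gamma)+c_2(\xi)f'(\gamma)/\gamma=0$ for $\gamma\in(0,1)$, with $c_1(\xi)=\lambda_1(\B)(\lvert\nabla\gamma\rvert^2-\mu)$; since $f$ and $\rho\mapsto f'(\rho)/\rho$ are linearly independent on $(0,1)$ (a short argument from the ODE), both coefficients vanish, so $\lvert\nabla\gamma\rvert^2\equiv\mu$, i.e.\ $\lvert\nabla_\Sp\Gamma\rvert^2+\Gamma^2\equiv\mu$ on $\Sp$. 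Evaluating this at a maximum and at a minimum point of $\Gamma$, where the spherical gradient vanishes, forces $\max\Gamma=\min\Gamma=\sqrt\mu$; hence $\Gamma$ is constant and $\Omega$ is a centered ball.
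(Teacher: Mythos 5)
Your proposal is correct, and its core mechanism is the paper's: transplant ball eigenfunctions through the constant-Jacobian, linear-on-rays map $T$, precompose with a rotation, average over $O(d)$, and recognize $G_0$ and $G_1$ as the coefficients of the radial and tangential energies. Two steps are packaged differently. First, where the paper splits the energy into three explicit terms $Q_1,Q_2,Q_3$ and disposes of them via \autoref{pr:Q23}, you form the single equivariant matrix field $M(\eta)=\int_{O(d)}U\widetilde A(U^\dagger\eta)U^\dagger\,d\gamma(U)$ with $A=DT\,DT^\dagger$ and invoke Schur's lemma for the stabilizer $O(d-1)$ to get $M(\eta)=\alpha\,\eta\eta^\dagger+\beta(\Id-\eta\eta^\dagger)$; this is equivalent (the vanishing cross term is exactly the off-diagonal block killed by the stabilizer, i.e.\ the first half of \autoref{pr:Q23}), and your identification of $\alpha,\beta$ with $G_0,G_1$ times $(V(\B)/V(\Omega))^{2/d}$ checks out since $DT^\dagger H=\Gamma\xi+\nabla\Gamma$ and $\tr(DT\,DT^\dagger)=\Gamma^2+|\nabla\Gamma|^2+\Gamma^2\lVert DH\rVert_{HS}^2$ by \autoref{le:orthog}. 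Your formulation is arguably cleaner conceptually, at the cost of hiding the pointwise cancellations inside an abstract representation-theoretic statement whose measurability/a.e.\ justifications (for the merely bi-Lipschitz $H$) you rightly flag but do not carry out. Second, where the paper proves the eigenvalue-sum inequality via the Rayleigh--Poincar\'e principle and then upgrades to concave $\Phi$ by Hardy--Littlewood--P\'olya majorization (Appendix~\ref{sec:major}), you go directly to the $\Phi$-form via Courant--Fischer ($\lambda_j(\Omega)\le q_j(U)$), Schur--Horn majorization of the diagonal by the spectrum, and Jensen in $U$; this chain is valid and yields the identical master inequality, with the Schur--Horn step playing precisely the role of HLP. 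Your equality analysis is also essentially the paper's ODE argument, run pointwise on rays rather than after integrating over $\Sp$, with the same key fact ($f'=asf$ contradicts the radial Bessel equation) and a slightly slicker finish from $\Gamma^2+|\nabla_\Sp\Gamma|^2\equiv\mu$ evaluated at extrema of $\Gamma$.
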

The proof is in \autoref{Deigen_proof}.

The only part of the theorem known previously was the estimate on the first eigenvalue. This extremal result for $\lambda_1 V^{2/d} /G_0$ was proved by P\'{o}lya and Szeg\H{o} \cite[pp.~14--15, 91--92]{PS51} in $2$ dimensions, and by Freitas and Krej{\v{c}}i{\v{r}}{\'{\i}}k \cite[Theorem~3]{FK08} in higher dimensions. The geometric factors in those papers look different from our $G_0$, but they are equivalent, as we explain in \autoref{sec:understanding}.

Note that for the first (and second) eigenvalue, the conclusion of our theorem is stronger than for the general case, because it uses $G_0$ instead of $G$. The underlying reason is that the first eigenfunction of a ball is purely radial, so that our proof does not depend on the angular information encoded in the homeomorphism $H$ and factor $G_1$.

We strengthen the theorem in \autoref{sec:improvements} by adapting the geometric factor to each eigenvalue. There we replace $G$ with a convex combination of $G_0$ and $G_1$ (rather than their maximum).

\subsection*{Perturbations of the ball}
Let us see what \autoref{Deigen} says for nearly spherical domains. Suppose $P(\xi)$ is a Lipschitz continuous function on the sphere $\Sp$, and define a domain $\Omega_\e = \{ r\xi : 0 \leq r < 1+\e P(\xi) \}$, assuming $0<1+\e P(\xi)$ for all $\xi$. 
\begin{corollary}[Nearly spherical domains] \label{perturbhigher}
The first eigenvalue of the domain $\Omega_\e$ can be bounded above and below in terms of the radial perturbation $P$:
\begin{align*}
1 \leq \frac{\lambda_1(\Omega_\e) V(\Omega_\e)^{2/d}}{\lambda_1(\B) V(\B)^{2/d}} & \leq G_0(\Omega_\e) \\
& = 1 + \Big( \int_\Sp |\nabla P|^2 \, \frac{dS}{|\Sp|} - (d-2) \int_\Sp (P-\overline{P})^2 \, \frac{dS}{|\Sp|} \Big) \e^2 + O(\e^3)
\end{align*}
as $\e \to 0$ with $P$ fixed, where $\overline{P}=\int_\Sp P \, dS/|\Sp|$ is the mean value of the perturbation.
\end{corollary}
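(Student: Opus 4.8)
The plan is to assemble the corollary from three inputs, two of which are immediate and one of which is a routine perturbative expansion. The lower bound $\lambda_1(\Omega_\e) V(\Omega_\e)^{2/d} \ge \lambda_1(\B) V(\B)^{2/d}$ is exactly the Faber--Krahn inequality, since $\lambda_1(\B)V(\B)^{2/d}$ is the minimum value of $\lambda_1 V^{2/d}$ over all domains. The upper bound is immediate from \autoref{Deigen}: that theorem says $\lambda_1 V^{2/d}/G_0$ is maximal when $\Omega$ is a centered ball, and $G_0 = 1$ on a centered ball (the Example preceding \autoref{le:leq1}), so
\[
\frac{\lambda_1(\Omega_\e)\, V(\Omega_\e)^{2/d}}{G_0(\Omega_\e)} \le \lambda_1(\B)\, V(\B)^{2/d},
\]
which rearranges to the claimed bound. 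The only actual computation is the Taylor expansion of $G_0(\Omega_\e)$ as $\e \to 0$, carried out from the explicit formula \eqref{eq:G0def}.

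For that expansion I would put $R = 1 + \e P$ in \eqref{eq:G0def}. Since $R$ is extended homogeneously of degree zero, its gradient at a point of $\Sp$ is the tangential (spherical) gradient, so $\nabla R = \e\nabla P$ and $|\nabla R|^2 = \e^2 |\nabla P|^2$; as $P$ is Lipschitz we have $\nabla P \in L^\infty(\Sp)$, so all integrals in \eqref{eq:G0def} are finite and the remainders are uniform for $\e$ in a fixed small interval. Then I would expand the three integrands $R^{d-2}$, $|\nabla R|^2 R^{d-4}$, and $R^d$ to second order in $\e$, integrate over the normalized sphere, and raise the $R^d$-average to the power $(d-2)/d$ using the binomial series. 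One finds that the numerator and denominator of $G_0$ carry the same first-order term $(d-2)\overline{P}$ (where $\overline{f} := \int_\Sp f\,dS/|\Sp|$), so it cancels in the quotient; collecting the $\e^2$ terms gives
\[
G_0(\Omega_\e) = 1 + \Big( \overline{|\nabla P|^2} - (d-2)\,\overline{(P - \overline{P})^2} \Big)\e^2 + O(\e^3),
\]
which is the asserted expansion.

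There is no essential obstacle here; the only point that needs care is the bookkeeping of the quadratic coefficients. When one inverts the denominator, squaring its linear part produces a cross term, and because the exponent $(d-2)/d$ is strictly less than $1$ one picks up an extra $-(d-2)\overline{P}^2$ contribution. Together with the Pascal identity $\binom{d-2}{2} - \binom{d-1}{2} = -(d-2)$, this is exactly what recombines the $\overline{P^2}$ and $\overline{P}^2$ terms into the variance $\overline{(P - \overline{P})^2}$. Thus the corollary follows from \autoref{Deigen} and a direct perturbative evaluation of the explicit geometric factor $G_0$.
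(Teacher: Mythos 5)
Your proposal is correct and follows exactly the route the paper indicates: Faber--Krahn for the lower bound, \autoref{Deigen} with $G_0(\B)=1$ for the upper bound, and the Taylor expansion of $G_0(\Omega_\e)$ obtained by substituting $R=1+\e P$ into \eqref{eq:G0def}. Your bookkeeping of the quadratic terms (cancellation of the first-order terms, the identity $\binom{d-2}{2}-\binom{d-1}{2}=-(d-2)$, and the $-(d-2)\overline{P}^2$ from the binomial expansion of the denominator) checks out and reproduces the variance term $\overline{(P-\overline{P})^2}$, supplying the details the paper leaves as ``straightforward calculations.''
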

The lower bound is simply the Faber--Krahn result. The upper bound appears not to have been stated before. It follows by straightforward calculations from \autoref{Deigen}, simply substituting $R=1+\e P$ into the definition \eqref{eq:G0def} of $G_0$. It can be compared with van den Berg's result \cite[Theorem~1(ii)]{vdB12}, just like in $2$ dimensions --- see the remarks after \autoref{perturb2}.

Amusingly, the corollary implies a Poincar\'{e} inequality on the sphere, since the $\e^2$-term is necessarily nonnegative. 

\subsection*{Neumann and Robin boundary conditions}
Denote the Neumann eigenvalues by $\mu_j$, assuming that $\partial \Omega$ is Lipschitz so that the spectrum exists and is discrete. Write $u_j$ for corresponding orthonormal eigenfunctions. Then
\[
\begin{cases}
-\Delta u_j = \mu_j u_j \;\; \text{in $\Omega$} \\
\hfill \frac{\partial u_j}{\partial n}  = 0 \;\; \text{on $\partial \Omega$}
\end{cases}
\]
and
\[
0 = \mu_1 < \mu_2 \leq \mu_3 \leq \dots .
\]
We will ignore the first eigenvalue, in the next theorem, since $\mu_1=0$ for every domain.
\begin{theorem}[Neumann] \label{Neigen}
Assume $\Omega$ is a Lipschitz-starlike domain in $\Rd, d \geq 2$. Suppose $\Phi : \R_+ \to \R$ is concave and increasing, and let $n \geq 2$. Then the scale invariant eigenvalue functional $\sum_{j=2}^n \Phi(\mu_j V^{2/d} /G)$ is maximal when $\Omega$ is a centered ball. 

In particular, for $0 < s \leq 1$ the functionals
\[
\mu_2 V^{2/d} /G, \qquad
(\mu_2^s + \cdots + \mu_n^s)^{1/s} \, V^{2/d} /G, \qquad
\sqrt[n-1]{\mu_2 \cdots \mu_n} \, V^{2/d} /G,
\]
are maximal when $\Omega$ is a centered ball. For $s<0<t$ the functionals
\[
\sum_{j=2}^n (\mu_j V^{2/d} /G)^s \qquad \text{and} \qquad 
\sum_{j=2}^n \exp(-\mu_j V^{2/d} \, t/G) 
\]
are minimal when $\Omega$ is a centered ball.

Equality statement for the first nonzero eigenvalue: if $R$ and $H$ are $C^2$-smooth and $\mu_2 V^{2/d} /G \big|_\Omega = \mu_2 V^{2/d} /G \big|_\B$ then $\Omega$ is a centered ball.
\end{theorem}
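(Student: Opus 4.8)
\emph{Proof idea.} The plan is to run the Dirichlet argument of \autoref{Deigen_proof} almost verbatim, with three changes forced by the free boundary: transplant the \emph{Neumann} eigenfunctions of the ball instead of the Dirichlet ones; use trial functions lying only in $H^1(\Omega)$, since no boundary condition is required; and keep the constant function on hand as an extra trial function so as to absorb the zero eigenvalue $\mu_1=0$. Concretely, fix $n\geq 2$, take an $L^2(\B)$-orthonormal system $w_2,\dots,w_n$ of Neumann eigenfunctions of the ball for $\mu_2(\B)\leq\cdots\leq\mu_n(\B)$ (each smooth up to $\partial\B$ and orthogonal to constants), and for $U\in O(d)$ set $v_j^U=w_j\circ(UT)$, where $T:\Omega\to\B$ is the constant-Jacobian map of \autoref{sec:setup}. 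Since $T$ is bi-Lipschitz, $v_j^U\in H^1(\Omega)$, and since $\operatorname{Jac}(T)\equiv V(\B)/V(\Omega)$ is constant, the substitution $y=UT(x)$ shows $\int_\Omega v_j^Uv_k^U\,dx=\frac{V(\Omega)}{V(\B)}\delta_{jk}$ and $\int_\Omega v_j^U\,dx=0$, so $\{1,v_2^U,\dots,v_n^U\}$ is an orthogonal family in $H^1(\Omega)$. The Rayleigh--Ritz principle for the sum of the first $n$ Neumann eigenvalues then gives, for each $U$,
\[
\sum_{j=2}^n\mu_j(\Omega)=\sum_{j=1}^n\mu_j(\Omega)\leq R_\Omega(1)+\sum_{j=2}^n R_\Omega(v_j^U)=\sum_{j=2}^n R_\Omega(v_j^U),
\]
using $\mu_1(\Omega)=0$ and $R_\Omega(1)=0$.

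Next, because the denominators $\int_\Omega(v_j^U)^2\,dx=V(\Omega)/V(\B)$ do not depend on $U$, I would average the displayed inequality over $U\in O(d)$ and apply \autoref{pr:Q23}, which bounds the rotation-average of $\int_\Omega|\nabla(w\circ UT)|^2\,dx$, for any function $w$ on $\B$, by $G_0$ times the radial part of $\int_\B|\nabla w|^2$ plus $G_1$ times its angular part. Both parts are nonnegative, so $G=\max\{G_0,G_1\}$ dominates the combination, and summing over $j$ yields
\[
\sum_{j=2}^n\mu_j(\Omega)\,V(\Omega)^{2/d}\leq G\sum_{j=2}^n\mu_j(\B)\,V(\B)^{2/d}.
\]
Performing the same computation with any $k$ in place of $n$ gives the whole ladder of partial-sum bounds $\sum_{j=2}^k\mu_j(\Omega)V(\Omega)^{2/d}/G\leq\sum_{j=2}^k\mu_j(\B)V(\B)^{2/d}$ for $2\leq k\leq n$. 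Since $G(\B)=1$, the right side is the value of the functional on the ball.

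From here the argument is identical to the Dirichlet case. As $(\mu_j(\Omega))$ and $(\mu_j(\B))$ are nondecreasing, the partial-sum ladder passes through any concave increasing $\Phi$ by the Hardy--Littlewood--P\'olya majorization lemma used in \autoref{Deigen_proof}, giving $\sum_{j=2}^n\Phi(\mu_j(\Omega)V(\Omega)^{2/d}/G)\leq\sum_{j=2}^n\Phi(\mu_j(\B)V(\B)^{2/d})$. Choosing $\Phi(x)=x^s$ for $0<s\leq1$ and $\Phi(x)=\log x$ yields the power-mean and geometric-mean statements; choosing $\Phi(x)=-x^s$ for $s<0$ and $\Phi(x)=-e^{-tx}$ for $t>0$, both concave and increasing on $\R_+$, yields the spectral-zeta and heat-trace statements with the inequality reversed. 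For the equality claim I would take $n=2$; if $\mu_2V^{2/d}/G$ agrees on $\Omega$ and $\B$ then every inequality above is an equality, so in particular the step $G_0(\text{radial})+G_1(\text{angular})\leq G(\text{radial}+\text{angular})$ is sharp even though the $\mu_2$-eigenfunction of the ball has nonzero radial \emph{and} nonzero angular part, which forces $G_0=G_1=G$; and equality survives in \autoref{pr:Q23} itself. Using the $C^2$-smoothness of $R$ and $H$, the equality analysis of \autoref{pr:Q23} combined with the equality case of \autoref{le:leq1} then forces $R$ to be constant and $H$ orthogonal, i.e.\ $\Omega$ a centered ball.

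The step I expect to be most delicate is not the analysis --- \autoref{pr:Q23} is already available and applies here unchanged --- but the structural bookkeeping: correctly accommodating the zero mode $\mu_1=0$ through the constant trial function, and recognizing that the angular factor $G_1$ is unavoidable here. Unlike the first two \emph{Dirichlet} eigenvalues, no Neumann eigenfunction of the ball other than the constant is radial, so $\mu_2$ cannot be normalized by $G_0$ alone; this is why the Neumann theorem carries $G$ rather than $G_0$ throughout. The other place needing care is the equality discussion, where the $C^2$ hypothesis is what rules out degenerate near-ball configurations.
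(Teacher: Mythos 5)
Your proof of the inequality itself is essentially the paper's: the paper likewise transplants Neumann eigenfunctions of the ball via the constant-Jacobian map $T$, runs the $Q_1+Q_2+Q_3$ averaging from \autoref{Deigen_proof} with $H^1(\Omega)$ trial functions, and disposes of the zero mode by noting $\mu_1=0$ on both sides (your device of carrying the constant function explicitly as an extra orthogonal trial function is an equivalent bookkeeping). The partial-sum ladder, the majorization step, and the particular choices of $\Phi$ all match. Your observation that the second Neumann eigenfunction of the ball has nonzero angular part, so that $G_1$ cannot be avoided even for $\mu_2$, is also the correct explanation for why the theorem carries $G$ rather than $G_0$.

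The equality statement is where your argument has a genuine gap. Tracing equality through your chain yields two facts: the trial function is an actual eigenfunction (equality in Rayleigh--Ritz), and $(1-\alpha_2)G_0+\alpha_2 G_1=\max\{G_0,G_1\}$ with $0<\alpha_2<1$, which forces $G_0=G_1$ --- but \emph{not} $G_0=G_1=1$. Two domains can have $G_0=G_1$ both large. Your appeal to ``the equality analysis of \autoref{pr:Q23}'' does not help, because \autoref{pr:Q23} is an exact identity, not an inequality; and the equality case of \autoref{le:leq1} takes $G_0=1$ as its \emph{hypothesis}, which you have not established. To close the argument along your lines you would have to exploit the eigenfunction identity $\Delta(w_2\circ T)=-\mu_2(\Omega)\,w_2\circ T$ as in the Dirichlet equality proof, which is harder here because $w_2$ is not radial and $H$ enters. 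The paper instead sidesteps all of this: by Szeg\H{o}--Weinberger, $\mu_2 V^{2/d}\big|_\Omega\leq\mu_2 V^{2/d}\big|_\B$ already holds with no geometric factor, so equality of $\mu_2V^{2/d}/G$ together with $G(\Omega)\geq 1=G(\B)$ forces $G(\Omega)=1$, hence $G_0(\Omega)=1$, and \autoref{le:leq1} then gives that $\Omega$ is a centered ball. You should replace your equality paragraph with this argument (or supply the missing PDE analysis).
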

\autoref{Neigen_proof} has the proof. Because $G \geq 1$, the bound on $\mu_2 V^{2/d} /G$ in \autoref{Neigen} follows from the Szeg\H{o}--Weinberger theorem that $\mu_2 V^{2/d}$ is maximal for the ball (see \cite{W56}, or  \cite[Theorem 7.1.1]{He06}). Note that \autoref{Neigen} holds for higher eigenvalue functionals too, which the Szeg\H{o}--Weinberger theorem does not.

\autoref{Neigen} might hold with $G$ replaced by the smaller quantity $G_1$ (the moment of inertia type functional), as we have conjectured elsewhere \cite[\S4]{LS11b}.  

Next we turn to Robin boundary conditions. The Robin eigenvalue problem is
\[
\begin{cases}
-\hbar^2 \Delta u_j = \rho_j u_j \;\; \text{in $\Omega$,} \\
\hfill \hbar^2 \frac{\partial u_j}{\partial n} + \sigma u_j = 0 \;\; \text{on $\partial \Omega$,}
\end{cases}
\]
with eigenvalues
\[
\rho_1 < \rho_2 \leq \rho_3 \leq \dots ,
\]
where $\sigma \in L^\infty(\partial \Omega)$ is the \emph{Robin parameter} and $\hbar>0$ is the \emph{Planck constant}. Existence and discreteness of the Robin spectrum under these assumptions
follows from the usual quadratic form approach; see for example \cite[Chapter~5]{L12a}. Note the Robin parameter $\sigma$ will be fixed in our work. For interesting asymptotic behavior of Robin eigenvalues as $\sigma$ varies and approaches $\pm \infty$, see \cite{DK10,Ko12} and references therein. 

The Robin eigenvalues reduce to Neumann when $\hbar=1, \sigma \equiv 0$.

In the Dirichlet and Neumann eigenvalue problems we took $\hbar=1$. That causes no loss of generality, since one can always adjust the value of $\hbar$ by rescaling the domain. In our Robin result below, though, the Planck constant and Robin parameter will be multiplied by different geometric factors. Accordingly we write $\rho_j = \rho_j(\Omega,\hbar,\sigma)$ to display the dependence of the $j$th Robin eigenvalue on the domain, Planck constant and Robin parameter.

Our theorem will involve a new geometric factor,
\begin{equation} \label{eq:GRobindef}
G_\Robin =  \left( \frac{|\partial \Omega|/V(\Omega)^{(d-1)/d}}{|\partial \B|/V(\B)^{(d-1)/d}} \right)^{\! \! 2} .
\end{equation}
Clearly $G_\Robin \geq 1$ by the isoperimetric inequality, with equality if and only if $\Omega$ is a ball. And $G_0$ is larger than $G_\Robin$:
\begin{lemma} \label{le:leq1Robin}
$G_0 \geq G_\Robin \geq 1$.
\end{lemma}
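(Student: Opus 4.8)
The plan is to prove the two inequalities $G_0 \geq G_\Robin$ and $G_\Robin \geq 1$ separately, the latter being immediate from the isoperimetric inequality as already noted in the text. So the real content is $G_0 \geq G_\Robin$. First I would rewrite both quantities using the same normalization. Recall $V(\Omega) = \frac{1}{d}\int_\Sp R^d \, dS$ and note the surface area admits the representation
\[
|\partial \Omega| = \int_\Sp \sqrt{R^{2} + |\nabla R|^2}\, R^{d-2} \, dS,
\]
which follows from the standard area formula for the graph $\xi \mapsto R(\xi)\xi$ over the sphere (here $\nabla R$ is the spherical gradient). Then, after cancelling the common factor $|\partial\B|^2/V(\B)^{2(d-1)/d}$ that sits in the denominator of $G_\Robin$ and using $|\partial\B| = d\,V(\B)$, the inequality $G_0 \geq G_\Robin$ reduces to comparing two expressions whose denominators are both powers of $\int_\Sp R^d\, dS$.

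The key step is then a pointwise/Cauchy--Schwarz comparison of the numerators. After clearing the volume normalizations, $G_0 \geq G_\Robin$ becomes (up to dimensional constants that I would track carefully)
\[
\Big(\tfrac{1}{|\Sp|}\int_\Sp \big(R^{d-2} + |\nabla R|^2 R^{d-4}\big)\, dS\Big)\Big(\tfrac{1}{|\Sp|}\int_\Sp R^d \, dS\Big) \;\geq\; \Big(\tfrac{1}{|\Sp|}\int_\Sp \sqrt{R^2+|\nabla R|^2}\, R^{d-2}\, dS\Big)^{\!2}.
\]
This has exactly the shape of Cauchy--Schwarz: write $\sqrt{R^2+|\nabla R|^2}\,R^{d-2} = \sqrt{R^{d-2}+|\nabla R|^2 R^{d-4}}\cdot\sqrt{R^d}$, so the right-hand side is $\big(\int \sqrt{a}\sqrt{b}\big)^2$ with $a = R^{d-2}+|\nabla R|^2 R^{d-4}$ and $b = R^d$, which is $\leq (\int a)(\int b)$. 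That closes the argument, with equality iff $a$ is proportional to $b$, i.e.\ $|\nabla R|^2 R^{d-4}$ is a constant multiple of $R^{d-2} - cR^d$; combined with $G_\Robin = 1$ forcing $R$ constant, one recovers the centered-ball equality case, though the lemma as stated does not demand it.

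The main obstacle I anticipate is purely bookkeeping: making sure the dimensional constants ($d$, $|\Sp|$, $|\partial\B| = d\,V(\B)$, and the exponents $(d-2)/d$ versus $2(d-1)/d$) line up so that the volume-normalization factors genuinely cancel and leave precisely the clean Cauchy--Schwarz inequality above. One has to check that the power of $\int R^d\,dS$ produced on the $G_0$ side after multiplying numerator by $\big(\tfrac1{|\Sp|}\int R^d\big)^{2/d}$ matches the power $2(d-1)/d$ appearing implicitly in $G_\Robin$ once $|\partial\B|/V(\B)^{(d-1)/d}$ is expanded — a short computation, but the place where a sign or exponent error would be easy to make. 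A secondary minor point is justifying the area representation for $|\partial\Omega|$ for merely Lipschitz $R$; this is standard (Rademacher plus the area formula), and since $G_0$ is defined in the text with $|\nabla R|$ appearing anyway, no new regularity is needed.
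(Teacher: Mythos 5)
Your proof is correct and is essentially the paper's own argument written in spherical coordinates: the paper first converts $G_0$ into the boundary integral $\frac{1}{|\Sp|}\int_{\partial\Omega}\frac{dS}{x\cdot N(x)}\big(V(\B)/V(\Omega)\big)^{(d-2)/d}$ and applies Cauchy--Schwarz to $\int_{\partial\Omega}\frac{dS}{x\cdot N}\cdot\int_{\partial\Omega}x\cdot N\,dS\geq|\partial\Omega|^2$ together with the divergence theorem $\int_{\partial\Omega}x\cdot N\,dS=dV(\Omega)$, which under the parametrization $x=R(\xi)\xi$ is exactly your splitting $a=R^{d-2}+|\nabla R|^2R^{d-4}$, $b=R^d$. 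The dimensional constants do line up as you anticipate (using $|\Sp|=dV(\B)$ and $\int_\Sp R^d\,dS=dV(\Omega)$), and the final step $G_\Robin\geq 1$ is, as you say, just the isoperimetric inequality.
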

The lemma was proved in $2$ dimensions by Aissen \cite[Theorem~1]{A58}. Our proof appears in \autoref{sec:understanding}, and is valid in all dimensions.

Now we can state our sharp upper bound on Robin eigenvalues.
\begin{theorem}[Robin] \label{Reigen}
Assume $\Omega$ is a Lipschitz-starlike domain in $\Rd, d \geq 2$. Suppose $\Phi : \R \to \R$ is concave and increasing, and let $n \geq 1$. Then
\[
\sum_{j=1}^n \Phi \big( \rho_j(\Omega,\hbar V^{1/d} /G^{1/2},\sigma V^{1/d} /G_\Robin^{1/2}) \big)
\]
is maximal when $\Omega$ is a centered ball and $\sigma$ is replaced by its average value.

For the first eigenvalue one has a stronger result (with $G_0$ instead of $G$):
\begin{equation} \label{eq:Robinfirst}
\rho_1 \big( \Omega,\hbar \overline{R}/G_0^{1/2},\sigma \overline{R}/G_\Robin^{1/2} \big) \\
\leq \rho_1( \B,\hbar,\overline{\sigma})
\end{equation}
where $\overline{R}$ is the radius of a ball having the same volume as $\Omega$ and $\overline{\sigma}=\int_{\partial \Omega} \sigma \, dS/|\partial \Omega|$ is the average value of the Robin parameter. If equality holds in \eqref{eq:Robinfirst} and if $R$ is $C^2$-smooth, then $\Omega$ is a centered ball.
\end{theorem}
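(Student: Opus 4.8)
\emph{Setup.} The plan is to run the transplantation-and-averaging scheme behind \autoref{Deigen}, using the volume-preserving map $T:\Omega\to\B$ of \autoref{sec:setup}. Let $v_1,\dots,v_n$ be orthonormal eigenfunctions of the Robin problem on the unit ball $\B$ with Planck constant $\hbar$ and \emph{constant} parameter $\overline{\sigma}$; recall that $v_1$ is radial and positive. For $U\in O(d)$ set $w_j:=v_j\circ(UT)$, which is Lipschitz on $\Omega$; since $T$ has constant Jacobian $V(\B)/V(\Omega)$, the $w_j$ are mutually orthogonal in $L^2(\Omega)$ with common norm $\lVert w_j\rVert_{L^2(\Omega)}^2=V(\Omega)/V(\B)$, for every $U$. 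These will be the trial functions. The only structurally new feature relative to the Dirichlet proof is the boundary integral $\int_{\partial\Omega}\sigma\,w_j^2\,dS$ in the Robin quadratic form, and dealing with it is what forces the extra, isoperimetric factor $G_\Robin$.

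\emph{Variational reduction and the energy term.} Fix $U$. The eigenvalues of the $n\times n$ matrix of the scaled Robin form in the basis $\{w_j\}$, normalized by $V(\Omega)/V(\B)$, dominate the eigenvalues $\rho_1,\dots,\rho_n$ of the scaled problem $\rho_j(\Omega,\hbar\overline{R}/G^{1/2},\sigma\overline{R}/G_\Robin^{1/2})$ --- here $\overline{R}=(V(\Omega)/V(\B))^{1/d}$ is the radius of the ball of volume $V(\Omega)$ --- while the diagonal entries of that matrix are the scaled Rayleigh quotients $Q_\Omega[w_j]$. Since the eigenvalues of a symmetric matrix majorize its diagonal, monotonicity and concavity of $\Phi$ give $\sum_{j=1}^n\Phi(\rho_j)\le\sum_{j=1}^n\Phi(Q_\Omega[w_j])$, just as for the Dirichlet eigenvalue sums (\autoref{Deigen_proof}). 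Averaging over $U\in O(d)$ and applying Jensen's inequality for the concave $\Phi$, it is enough to show
\[
\operatorname{avg}_{U\in O(d)}Q_\Omega[w_j]\;\le\;\rho_j(\B,\hbar,\overline{\sigma})\qquad(j=1,\dots,n),
\]
with $G_0$ in place of $G$ whenever $v_j$ is radial. For the averaged Dirichlet energy I would quote the computation underlying \autoref{Deigen} (see \autoref{pr:Q23}): decomposing $\nabla v_j$ into radial and spherical components, rotation-averaging combined with the tight-frame (Schur's Lemma) identities gives $\operatorname{avg}_U\int_\Omega|\nabla w_j|^2\,dV=(A_jG_0+B_jG_1)(V(\Omega)/V(\B))^{(d-2)/d}$ with $A_j,B_j\ge0$ and $A_j+B_j=\int_\B|\nabla v_j|^2$; bounding $G_0,G_1\le G$ and using the $G$ built into the scaling of $\hbar$ makes this contribution collapse to $\hbar^2(V(\Omega)/V(\B))\int_\B|\nabla v_j|^2$.

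\emph{The boundary term.} The map $T$ sends $\partial\Omega$ onto $\partial\B$ through the homeomorphism $H$, so $w_j=v_j\circ(UH)$ on $\partial\Omega$; since $H(\xi)$ is a unit vector and $U$ sweeps out all of $O(d)$, the average $\operatorname{avg}_U w_j(x)^2=\frac{1}{|\partial\B|}\int_{\partial\B}v_j^2\,dS$ is \emph{constant} on $\partial\Omega$. Hence
\[
\operatorname{avg}_{U\in O(d)}\int_{\partial\Omega}\sigma\,w_j^2\,dS\;=\;\frac{|\partial\Omega|}{|\partial\B|}\,\overline{\sigma}\int_{\partial\B}v_j^2\,dS ,
\]
and after the scaling $\sigma\mapsto\sigma\,\overline{R}/G_\Robin^{1/2}$ the surface-area ratio $|\partial\Omega|/|\partial\B|$ becomes, by the definition \eqref{eq:GRobindef}, exactly the volume factor $V(\Omega)/V(\B)$ that already accompanies the energy term. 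Adding the two contributions and dividing by $\lVert w_j\rVert^2=V(\Omega)/V(\B)$ yields $\operatorname{avg}_U Q_\Omega[w_j]\le\hbar^2\int_\B|\nabla v_j|^2+\overline{\sigma}\int_{\partial\B}v_j^2=\rho_j(\B,\hbar,\overline{\sigma})$, the average of $\sigma$ appearing of its own accord. This establishes the claimed maximality of $\sum_{j=1}^n\Phi(\rho_j(\cdot))$. The two distinct geometric factors are forced here: the energy term needs $G=\max\{G_0,G_1\}$ to absorb the angular distortion of $T$, whereas the boundary integral feels only the isoperimetric deficit $G_\Robin$.

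\emph{Equality case.} For $n=1$ the eigenfunction $v_1$ is radial, so $B_1=0$, the factor $G_0$ may be used, and $w:=v_1\circ T$ does not depend on $U$; following the chain above shows that in fact $Q_\Omega[w]=\rho_1(\B,\hbar,\overline{\sigma})$ \emph{identically}, the only inequality being $\rho_1(\Omega,\hbar\overline{R}/G_0^{1/2},\sigma\overline{R}/G_\Robin^{1/2})\le Q_\Omega[w]$, which is \eqref{eq:Robinfirst}. Consequently, equality in \eqref{eq:Robinfirst} forces the positive Lipschitz function $w(x)=v_1(|x|/R(x))$ to be a first Robin eigenfunction of $\Omega$, hence smooth in $\Omega$ by interior elliptic regularity and a solution of the Robin equation there. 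Writing $w=v_1\circ f$ with $f(x)=|x|/R(x)$, one has $|\nabla f|^2=R^{-2}+R^{-4}|\nabla R|^2$, a function of direction alone by Euler's identity for the homogeneous-degree-zero extension of $R$; inserting this together with the radial ODE and boundary condition satisfied by $v_1$ turns the eigenvalue equation into an overdetermined relation for the $C^2$ radius function $R$, which --- just as in the equality discussion for \autoref{Deigen} --- forces $|\nabla R|\equiv0$, i.e.\ $\Omega$ is a ball centered at the origin. I expect the main obstacle to be this rigidity step, together with the rotation-averaging of the energy term inherited from \autoref{Deigen}; the genuinely new piece, the boundary term, should cause no difficulty, precisely because averaging over $O(d)$ radializes the boundary trace exactly.
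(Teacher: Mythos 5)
Your proposal is correct and follows essentially the same route as the paper: transplant the ball's Robin eigenfunctions (with averaged parameter) via the constant-Jacobian map $T$, average over $O(d)$ so that the gradient term is handled exactly as in \autoref{Deigen_proof} and the boundary term radializes with $G_\Robin$ cancelling the ratio $|\partial\Omega|/|\partial\B|$, then use radiality of $u_1$ for the $G_0$-version and the ODE rigidity argument for equality. The only cosmetic difference is bookkeeping for concave $\Phi$ (form-matrix majorization plus Jensen, versus the paper's linear sum inequality followed by Hardy--Littlewood--P\'olya majorization), which is an equivalent reduction.
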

See \autoref{Reigen_proof} for the proof. 

Note that if $\sigma>0$ then the Robin eigenvalues are all positive, in which case $\Phi$ need only be concave and increasing on the half-axis $\R_+$. 
   
A particularly simple corollary holds for the ball: averaging the Robin parameter increases the eigenvalue functionals on a ball, with 
\[
\sum_{j=1}^n \Phi \big( \rho_j(\B,\hbar,\sigma) \big) \leq \sum_{j=1}^n \Phi \big( \rho_j(\B,\hbar,\overline{\sigma}) \big) .
\]

\medskip
\noindent \emph{Remark.} Bareket \cite[Appendix~A]{B77} applied the P\'{o}lya--Szeg\H{o} trial function technique with a constant parameter $\sigma<0$ and got an upper bound on $\rho_1$. She did not attach geometric factors to the Planck constant or Robin parameter, though, and so her bound looks different from ours in \eqref{eq:Robinfirst}.

Bareket raised an analogue of Rayleigh's Conjecture for negative Robin parameter, namely that 
\[
\rho_1(\Omega,1,\overline{\sigma}) \leq \rho_1(\B,1,\overline{\sigma}) ,
\]
whenever $\overline{\sigma}<0$ is constant and $\Omega$ has the same volume as $\B$. Notice the ball is the maximizer here, which is the opposite of the Faber--Krahn type result by Bossel and Daners that holds when $\overline{\sigma} \geq 0$. Let us compare Bareket's conjecture with our result \eqref{eq:Robinfirst} for the first eigenvalue, which says (when $\hbar=1, \overline{R}=1$ and $\sigma \equiv \overline{\sigma} < 0$) that 
\[
\rho_1 \big( \Omega,1/G_0^{1/2},\sigma/G_\Robin^{1/2} \big) \leq \rho_1( \B,1,\overline{\sigma}) .
\]
The factors $G_0$ and $G_\Robin$ are both greater than $1$, and so the Planck constant and Robin parameter are smaller in magnitude on the left side of our inequality than on the left side of Bareket's conjecture. In particular, our Robin parameter is less negative than Bareket's. Thus while our Planck constant tends to make the Rayleigh quotient smaller than it would be with Bareket's Planck constant, our Robin parameter tends to make it bigger. Hence our result is most likely not comparable to her conjecture.

\section{\bf Averaging over rotations and reflections}

Our proofs will involve averaging over the group of all orthogonal transformations.

Write $\gamma=\gamma_d$ for the Haar probability measure on the group $O(d)$ of orthogonal, real, $d \times d$ matrices. Let ``$\Id$'' denote the identity matrix.
\begin{lemma}[Averaging in a conjugacy class] \label{le:averaging}
Supose $M$ is a $d \times d$ real symmetric matrix, for some $d \geq 1$. Then
\begin{equation} \label{eq:traceav}
  \int_{O(d)} U^{-1} M U \, d\gamma(U) = \frac{1}{d} \tr(M) \Id .
\end{equation}
Hence for each column vector $m \in \Rd$,
\[
  \int_{O(d)} \begin{pmatrix} U & 0 \\ 0 & 1 \end{pmatrix}^{\! \! -1} \begin{pmatrix} M & m \\ m^\dagger & 0 \end{pmatrix} \begin{pmatrix} U & 0 \\ 0 & 1 \end{pmatrix} \, d\gamma(U) = \frac{1}{d} \tr(M) \begin{pmatrix} \Id & 0 \\ 0 & 0 \end{pmatrix} .
\]
\end{lemma}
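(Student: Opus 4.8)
The two claims are both statements about averaging a matrix over the orthogonal group, and the second follows from the first by a block-matrix bookkeeping argument together with one auxiliary averaging identity about the off-diagonal vector $m$. So I would first prove \eqref{eq:traceav}, then deduce the block version.

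\textbf{Step 1: the trace-averaging identity \eqref{eq:traceav}.} Let $A = \int_{O(d)} U^{-1} M U \, d\gamma(U)$. The key observation is that $A$ commutes with every $V \in O(d)$: replacing $U$ by $VU$ in the integral (legitimate since $\gamma$ is Haar, hence invariant under left translation) and also using invariance under $U \mapsto UV$ one checks $V^{-1} A V = A$, i.e. $AV = VA$ for all orthogonal $V$. Since the orthogonal group acts irreducibly on $\R^d$ (for $d \geq 1$ this is elementary: any subspace invariant under all rotations and reflections is $\{0\}$ or all of $\R^d$), Schur's Lemma forces $A = c\,\Id$ for some scalar $c$. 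Taking traces and using $\tr(U^{-1}MU) = \tr(M)$ gives $d\,c = \tr(M)$, hence $c = \tr(M)/d$. (For $d=1$ the claim is trivial, $O(1) = \{\pm 1\}$ and conjugation is the identity.) I should note that $M$ need not actually be symmetric for this argument — symmetry is only needed so that $M$ genuinely arises as $DH^\dagger DH$ in the applications — but I will state it as given.

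\textbf{Step 2: the block version.} Write $\widetilde U = \begin{pmatrix} U & 0 \\ 0 & 1 \end{pmatrix}$, so $\widetilde U^{-1} = \begin{pmatrix} U^{-1} & 0 \\ 0 & 1 \end{pmatrix}$. Multiplying out the block product,
\[
\widetilde U^{-1} \begin{pmatrix} M & m \\ m^\dagger & 0 \end{pmatrix} \widetilde U = \begin{pmatrix} U^{-1} M U & U^{-1} m \\ m^\dagger U & 0 \end{pmatrix}.
\]
Now integrate entrywise over $O(d)$. The top-left block gives $\frac{1}{d}\tr(M)\,\Id$ by Step 1. The bottom-right entry is identically $0$. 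For the off-diagonal blocks I claim $\int_{O(d)} U^{-1} m \, d\gamma(U) = 0$ for every fixed vector $m$: the vector $v = \int_{O(d)} U^{-1} m \, d\gamma(U)$ is fixed by every orthogonal transformation (again by left-invariance of Haar measure, $V^{-1}v = \int (VU)^{-1} m \, d\gamma(U) = v$), and the only such vector is $0$ (apply $V = -\Id$, which is orthogonal, to get $v = -v$). Likewise $\int m^\dagger U \, d\gamma = 0$. Assembling the blocks yields exactly the stated right-hand side $\frac{1}{d}\tr(M)\begin{pmatrix}\Id & 0 \\ 0 & 0\end{pmatrix}$.

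\textbf{Expected obstacle.} There is no real obstacle here — this is a clean application of Schur's Lemma (or equivalently the ``$\frac{1}{d}\tr$'' projection onto scalars). The only point requiring the slightest care is justifying the manipulations of the Haar integral (measurability of the integrand, finiteness of $O(d)$-volume so that translation-invariance applies termwise), and making sure the irreducibility claim is invoked for the full orthogonal group $O(d)$ rather than $SO(d)$ — harmless, since $O(d)$ acts irreducibly on $\R^d$ for all $d \geq 1$, with $-\Id$ available to kill invariant vectors even when $SO(d)$ is trivial ($d=1$) or abelian ($d=2$).
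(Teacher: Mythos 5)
Your proposal is correct and follows essentially the same route as the paper: both show the averaged matrix commutes with every element of $O(d)$, conclude it is a scalar multiple of $\Id$, fix the scalar by taking the trace, and dispose of the off-diagonal blocks in the second identity via $\int_{O(d)} U \, d\gamma(U) = 0$. The only difference is that where you cite irreducibility and Schur's Lemma, the paper carries out that step by hand using the symmetry of the averaged matrix (a real eigenvector whose $O(d)$-orbit spans $\R^d$) — which incidentally also sidesteps the small real-versus-complex Schur subtlety that irreducibility over $\R$ alone does not force the commutant to be the scalars.
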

The lemma is a special case of Schur's Lemma from representation theory. We give a short proof, for the sake of completeness.
\begin{proof}[Proof of \autoref{le:averaging}]
Denote the left side of formula \eqref{eq:traceav} by
\[
L= \int_{O(d)} U^{-1} M U \, d\gamma(U) .
\]
For any $U \in O(d)$, we have $LU=UL$ by invariance of Haar measure. Let $\alpha$ be a real eigenvalue of $L$ with eigenvector $w \in \Rd$ (using here that $L$ is symmetric, by symmetry of $M$ and orthogonality of $U$). Then
\[
  L(Uw)=U(Lw)=U(\alpha w)=\alpha(Uw).
\]
Hence each vector in the orbit $\{ Uw : U \in O(d) \}$ is an eigenvector of $L$ with eigenvalue $\alpha$. The orbit spans all of $\Rd$, and so $L$ equals $\alpha$ times the identity. Taking the trace yields
\[
\alpha d = \tr L=\int_{O(d)} \tr(U^{-1} M U) \, d\gamma(U) =\int_{O(d)} \tr(M) \, d\gamma(U) = \tr(M) .
\]
Hence $L$ equals $\frac{1}{d} \tr(M)$ times the identity. 

The second formula in the lemma follows immediately, by multiplying out the matrices and using that $\int_{O(d)} U \, d\gamma(U) = 0$.
\end{proof}

\section{\bf Averaging and spherical homeomorphisms}
\label{sec:Hprop}

Consider a bi-Lipschitz homeomorphism $H : \Sp \to \Sp$, extended by homogeneity to $\Rd \setminus \{ 0 \}$ so that $H(r\xi) = H(\xi)$ for all $r>0, \xi \in \Sp$.
\begin{lemma}[Orthogonality relation] \label{le:orthog}
For all $\xi \in \Sp$, we have $(DH)^\dagger (\xi) H(\xi) = 0$. 
\end{lemma}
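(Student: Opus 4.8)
The plan is to exploit the fact that $H$ maps into the unit sphere, so $|H(\xi)|^2 \equiv 1$ as a function on $\R^d \setminus \{0\}$, together with the homogeneity $H(r\xi) = H(\xi)$.

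First I would record that since $H(\xi) \in \Sp$ for every $\xi$, the scalar function $\xi \mapsto |H(\xi)|^2 = H(\xi)^\dagger H(\xi)$ is identically equal to $1$ on $\R^d \setminus \{0\}$. Differentiating this constant function in $\xi$ and using the chain rule gives $2 (DH)^\dagger(\xi) H(\xi) = 0$ at every point of differentiability, which is almost every point since $H$ is bi-Lipschitz. Dividing by $2$ yields the claimed relation $(DH)^\dagger(\xi) H(\xi) = 0$ at almost every $\xi$, and hence at every $\xi \in \Sp$ where $DH$ is defined --- which is all that is needed, since the derivative matrix only appears inside integrals in the rest of the paper.

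I should also note the consistency of the dimensions: $H$ is defined on $\R^d \setminus \{0\}$ and takes values in $\R^d$, so $DH(\xi)$ is a $d \times d$ matrix, $H(\xi)$ is a column vector in $\R^d$, and $(DH)^\dagger(\xi) H(\xi)$ is a column vector in $\R^d$, so the equation makes sense. A complementary (and essentially equivalent) way to see the same thing, which I would mention as a geometric interpretation, is that the range of $DH(\xi)$ lies in the tangent space $T_{H(\xi)}\Sp$, which is exactly the orthogonal complement of $H(\xi)$; dually this says $H(\xi)$ annihilates the range of $DH(\xi)$, i.e. lies in the kernel of $(DH)^\dagger(\xi)$.

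There is essentially no obstacle here: the only mild technical point is that $H$ is merely Lipschitz rather than $C^1$, so the differentiation of $|H|^2 \equiv 1$ is justified only almost everywhere (Rademacher's theorem), but this is exactly the sense in which $DH$ is used throughout, so nothing is lost. One small bookkeeping item is to confirm that the chain rule for the composition $\xi \mapsto H(\xi)^\dagger H(\xi)$ is valid a.e.\ for Lipschitz maps, which is standard.
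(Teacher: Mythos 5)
Your proof is correct and is essentially identical to the paper's: both differentiate the identity $|H(x)|^2\equiv 1$ and read off $(DH)^\dagger H=0$. Your added remarks about almost-everywhere differentiability of the Lipschitz map and the tangent-space interpretation are accurate but not a different argument.
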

\begin{proof}[Proof of \autoref{le:orthog}]
We have $|H(x)|^2 \equiv 1$ for all $x \neq 0$, because $H$ takes values in the unit sphere. Taking the gradient of this last identity yields that $2H(x)^\dagger DH(x) \equiv 0$. Applying the transpose and evaluating at $x=\xi$ completes the proof.
\end{proof}
The lemma implies that $DH(\xi) y \cdot H(\xi)=0$ for all vectors $y$, so that the range of the derivative operator $DH$ at $\xi$ lies orthogonal to the image vector $H(\xi)$, as one would expect since $H$ maps into a sphere.

Some complicated expressions involving $H$ can be simplified considerably, after we average over all orthogonal matrices $U$. We will encounter expressions of the following type when we prove our main theorem.
\begin{proposition} \label{pr:Q23}
Let $H : \Sp \to \Sp$ be a bi-Lipschitz homeomorphism and let $\zeta \in \Sp$ be a fixed unit vector. 

If $F \in L^\infty(\Sp;\Rd)$ is a bounded, row-vector valued function on the sphere, then
\[
\int_{O(d)} F \big( H^{-1}(U\zeta) \big) (DH)^\dagger \big( H^{-1}(U\zeta) \big) U \, d\gamma(U) = 0 .
\]

If $f \in L^\infty(\Sp;\R)$ is a bounded, real-valued function on the sphere, then 
\[
\int_{O(d)} f \big( H^{-1}(U\zeta) \big) \, U^{-1} DH \big( H^{-1}(U\zeta) \big) (DH)^\dagger \big( H^{-1}(U\zeta) \big) U \, d\gamma(U) = c \, (\Id - \zeta \zeta^\dagger) 
\]
where
\[
c = \frac{1}{|\Sp|} \int_\Sp f(\xi) \frac{\lVert DH(\xi) \rVert_{HS}^2}{d-1} \text{Jac}_H(\xi) \, dS(\xi) .
\]
\end{proposition}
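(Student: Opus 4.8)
The plan is to reduce both identities to the averaging lemma (\autoref{le:averaging}) by a change of variables that converts the integrals over $O(d)$ into integrals over the sphere, and then exploit the orthogonality relation of \autoref{le:orthog}. The key observation is that as $U$ ranges over $O(d)$ with Haar measure, the point $U\zeta$ ranges over $\Sp$ with (normalized) surface measure, and conditioned on $U\zeta = \eta$ the ``remaining freedom'' in $U$ is a copy of $O(d-1)$ acting on the hyperplane $\eta^\perp$. Concretely, I would write $U = R_\eta U'$ where $R_\eta$ is a fixed rotation taking $\zeta$ to $\eta$ and $U'$ fixes $\zeta$; integrating over $U'$ first and $\eta$ afterward. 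After substituting $\xi = H^{-1}(\eta)$ (using \autoref{homexist} to pick up the Jacobian factor $\text{Jac}_H(\xi)$ when we pass from $d\eta$ to $dS(\xi)$), the inner $O(d-1)$-average will be the only place the averaging lemma is invoked.

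For the \textbf{first identity}: after the change of variables the integrand is $F(\xi)(DH)^\dagger(\xi)\,U$, and integrating $U$ over the fiber above $\eta=H(\xi)$ gives a matrix proportional to the projection onto the line $\R\eta$ — more precisely, $\int U'\,d\gamma(U')$ over rotations fixing $\zeta$ equals a multiple of $\zeta\zeta^\dagger$, so after conjugating by $R_\eta$ the $U$-average lands in $\R\,\eta\eta^\dagger$, i.e.\ every column is a multiple of $\eta = H(\xi)$. But $(DH)^\dagger(\xi)H(\xi) = 0$ by \autoref{le:orthog}, so $F(\xi)(DH)^\dagger(\xi)$ annihilates exactly the vector $H(\xi)$; hence the whole thing vanishes. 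I should double-check the bookkeeping that the $U$ sitting on the right is the one being averaged and that $F$, being evaluated at $H^{-1}(U\zeta)$, is a function of $\eta$ only, not of the fiber variable — that is what makes the fiberwise average legitimate.

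For the \textbf{second identity}: here the integrand after the change of variables is $f(\xi)\,U^{-1}\,[DH(\xi)(DH)^\dagger(\xi)]\,U$ with $\eta = H(\xi) = U\zeta$. The bracketed matrix $M(\xi) := DH(\xi)(DH)^\dagger(\xi)$ is symmetric, positive semidefinite, with $\tr M(\xi) = \lVert DH(\xi)\rVert_{HS}^2$, and by \autoref{le:orthog} its null space contains $H(\xi) = \eta$, so $M(\xi)$ ``lives on'' the hyperplane $\eta^\perp$. Writing $U = R_\eta U'$ and averaging over $U' \in O(d-1)$ acting on $\zeta^\perp$: the conjugation $U'^{-1}(R_\eta^{-1}M(\xi)R_\eta)U'$ averages, by \autoref{le:averaging} applied in dimension $d-1$ on the subspace $\zeta^\perp$, to $\frac{1}{d-1}\tr(M(\xi))\,(\Id - \zeta\zeta^\dagger)$ (the trace is unchanged by conjugation, and the identity is the identity of the $(d-1)$-dimensional subspace, i.e.\ $\Id - \zeta\zeta^\dagger$, since $\eta^\perp$ is mapped to $\zeta^\perp$ by $R_\eta^{-1}$ and $M$ already kills the complementary line). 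This is independent of $U'$, and it is also independent of the choice of $R_\eta$, so it survives the outer integration over $\eta$ unchanged except for the scalar weight. Collecting the scalars: integrating $f(\xi)\cdot\frac{1}{d-1}\tr(M(\xi))$ against the pushed-forward measure, which carries the factor $\text{Jac}_H(\xi)$, and dividing by $|\Sp|$ for the normalization, yields exactly the claimed constant $c = \frac{1}{|\Sp|}\int_\Sp f(\xi)\frac{\lVert DH(\xi)\rVert_{HS}^2}{d-1}\text{Jac}_H(\xi)\,dS(\xi)$.

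The main obstacle, and the step to be careful about, is the \emph{disintegration of Haar measure} $U = R_\eta U'$: I must verify that the fiberwise measure really is Haar measure on the stabilizer $O(d-1)$ of $\zeta$, that the map $\eta \mapsto U\zeta$ pushes $\gamma$ forward to normalized surface measure on $\Sp$, and that \autoref{le:averaging} is being applied in the correct ambient dimension on the subspace $\zeta^\perp$ so that its conclusion reads $\frac{1}{d-1}\tr(\cdot)(\Id-\zeta\zeta^\dagger)$ rather than $\frac{1}{d}\tr(\cdot)\Id$. An alternative that sidesteps explicit disintegration: define $L(\xi) := \int_{O(d)} \mathbbm{1}[U\zeta\in \cdot]\,(\text{integrand})\,d\gamma(U)$ as a matrix-valued measure, note by invariance of $\gamma$ that $L$ transforms equivariantly under the stabilizer of $\zeta$, conclude $L$ is a scalar multiple of $(\Id - \zeta\zeta^\dagger)$ plus a multiple of $\zeta\zeta^\dagger$, kill the $\zeta\zeta^\dagger$ part using \autoref{le:orthog}, and pin down the scalar by taking a trace — structurally identical to the proof of \autoref{le:averaging} itself. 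Either route, once the measure-theoretic point is nailed down, reduces the proposition to routine trace computations and the already-proved lemmas.
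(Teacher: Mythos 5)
Your proposal is correct and follows essentially the same route as the paper: fiber the Haar integral over the sphere via $U \mapsto U\zeta$, average over the $O(d-1)$ stabilizer using \autoref{le:averaging} in dimension $d-1$, kill the complementary direction with \autoref{le:orthog}, and evaluate the constant by pushing forward to surface measure and changing variable with $H$. The only difference is implementation: the disintegration $U = R_\eta U'$ that you flag as the delicate point is avoided in the paper by the change of variable $U \mapsto UVW$ (with $W\zeta=\nu$ and $V$ fixing $\nu$) followed by averaging over $V$, which uses only the invariance of Haar measure and the orbital-equals-spatial-average identity of \autoref{le:haar}.
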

\begin{proof}[Proof of \autoref{pr:Q23}]
Choose an orthogonal matrix $W$ that maps $\zeta$ to the north pole, meaning $W \zeta = \nu$ where $\nu=(0,\ldots,0,1)^\dagger$ is the north pole column vector.

We have that
\begin{align}
& \int_{O(d)} F \big( H^{-1}(U\zeta) \big) (DH)^\dagger \big( H^{-1}(U\zeta) \big) U \, d\gamma(U) \label{eq:Q23left} \\
& = \int_{O(d)} F \big( H^{-1}(U\nu) \big) (DH)^\dagger \big( H^{-1}(U\nu) \big) UVW \, d\gamma(U) \label{eq:Q23right}
\end{align}
by changing variable with $U \mapsto UVW$, where $V$ is an arbitrary matrix in $O(d)$ that fixes the north pole (that is, $V\nu=\nu$). We may write
\begin{equation} \label{eq:Vdecomp}
V = \begin{pmatrix} \widetilde{V} & 0 \\ 0 & 1 \end{pmatrix}
\end{equation}
where $\widetilde{V} \in O(d-1)$. 

We are permitted to average expression \eqref{eq:Q23right} with respect to $\widetilde{V}$, since formula \eqref{eq:Q23left} does not depend on $\widetilde{V}$. Notice $V$ appears only once in \eqref{eq:Q23right}. Averaging it yields 
\[
\int_{O(d-1)} V \, d\gamma_{d-1}(\widetilde{V}) = \begin{pmatrix} 0 & 0 \\ 0 & 1 \end{pmatrix}  = \nu \nu^\dagger .
\]
Multiplying on the right by $W$ gives
\[
\int_{O(d-1)} VW \, d\gamma_{d-1}(\widetilde{V})
= \nu \zeta^\dagger
\]
because $W\zeta=\nu$ by construction. Hence after averaging \eqref{eq:Q23right} with respect to $\widetilde{V}$ we obtain that expression \eqref{eq:Q23left} equals
\[
\int_{O(d)} F \big( H^{-1}(U\nu) \big) (DH)^\dagger \big( H^{-1}(U\nu) \big) U \nu \zeta^\dagger \, d\gamma(U) = 0 ,
\]
because
\begin{equation} \label{eq:orthogappl}
(DH)^\dagger \big( H^{-1}(U\nu) \big) U \nu = 0
\end{equation}
by \autoref{le:orthog} applied with $\xi=H^{-1}(U\nu)$.

Now we prove the second formula in the proposition. We find
\begin{align}
& \int_{O(d)} f \big( H^{-1}(U\zeta) \big) \, U^{-1} DH \big( H^{-1}(U\zeta) \big) (DH)^\dagger \big( H^{-1}(U\zeta) \big) U \, d\gamma(U) \label{eq:Q33left} \\
& = \int_{O(d)} f \big( H^{-1}(U\nu) \big) \, W^{-1} (V^{-1} M V)W \, d\gamma(U) \label{eq:Q33right}
\end{align}
by changing variable with $U \mapsto UVW$ as before, where we have defined a matrix
\[
M = U^{-1} DH \big( H^{-1}(U\nu) \big) (DH)^\dagger \big( H^{-1}(U\nu) \big) U .
\]
Note that $M\nu=0$ by \eqref{eq:orthogappl}, and similarly $\nu^\dagger M = 0$ by symmetry of $M$. Hence $M$ has the form
\[
M = \begin{pmatrix} \widetilde{M} & 0 \\ 0 & 0  \end{pmatrix} .
\]
Thus using the decomposition \eqref{eq:Vdecomp} for $V$, we conclude from \autoref{le:averaging} that 
\[
\int_{O(d-1)} V^{-1} M V \, d\gamma_{d-1}(\widetilde{V}) = \frac{1}{d-1} (\tr \widetilde{M}) \begin{pmatrix} \Id_{d-1} & 0 \\ 0 & 0 \end{pmatrix} .
\]

Notice formula \eqref{eq:Q33left} does not depend on $V$. Thus after averaging expression \eqref{eq:Q33right} with respect to $\widetilde{V}$ and using the last formula, we find that expression \eqref{eq:Q33left} equals
\[
\widetilde{c} \ W^{-1} \begin{pmatrix} \Id_{d-1} & 0 \\ 0 & 0 \end{pmatrix} W \quad 
\text{where} \quad \widetilde{c} = \frac{1}{d-1} \int_{O(d)} f \big( H^{-1}(U\nu) \big) (\tr \widetilde{M}) \, d\gamma(U)  .
\]
Since
\[
W^{-1} \begin{pmatrix} \Id_{d-1} & 0 \\ 0 & 0 \end{pmatrix} W
= W^{-1} (\Id - \nu \nu^\dagger) W = \Id - \zeta \zeta^\dagger ,
\]
we deduce that expression \eqref{eq:Q33left} equals $\widetilde{c} \, (\Id - \zeta \zeta^\dagger)$. 

Note that
\begin{align*}
\tr \widetilde{M} = \tr M
& = \tr \Big( DH \big( H^{-1}(U\nu) \big) (DH)^\dagger \big( H^{-1}(U\nu) \big) \Big) \\
& = \lVert DH \big( H^{-1}(U\nu) \big) \rVert_{HS}^2 .
\end{align*}
Hence $\widetilde{c}$ can be evaluated by using the equivalence between orbital and spatial averages (\autoref{app:averages}), which gives that
\[
\widetilde{c} = \frac{1}{d-1} \frac{1}{|\Sp|} \int_\Sp f \big( H^{-1}(\zeta^\prime) \big) \lVert DH \big( H^{-1}(\zeta^\prime) \big) \rVert_{HS}^2 \, dS(\zeta^\prime) .
\]
Lastly, changing variable with $\zeta^\prime=H(\xi)$ shows that $\widetilde{c}$ equals the constant $c$ defined in the Proposition. 
\end{proof}

\section{\bf Dirichlet eigenvalues --- proof of \autoref{Deigen}}
\label{Deigen_proof}

The idea is to obtain trial functions on $\Omega$ by transplanting eigenfunctions from $\B$ to $\Omega$ with the volume-preserving map $T$, and then to average with respect to rotations and reflections of $\B$.

Recall that the Rayleigh quotient associated with the Dirichlet spectrum is
\[
\Ray[u] = \frac{\int_\Omega |\nabla u|^2 \, dx}{\int_\Omega u^2 \, dx} \qquad \text{for\ } u \in H^1_0(\Omega) . 
\]
The Rayleigh--Poincar\'{e} Variational Principle \cite[p.~98]{B80} characterizes the sum of the first $n$ Dirichlet eigenvalues as:
\begin{align*}
\lambda_1 + \dots + \lambda_n & = \min \big\{ \Ray[v_1] + \dots + \Ray[v_n] : \\
& \qquad v_1, \dots,v_n \in H^1_0(\Omega)\text{\ are pairwise orthogonal in $L^2(\Omega)$} \big\} .
\end{align*}

To use this principle, let $u_1,u_2,u_3,\ldots$ be orthonormal eigenfunctions on $\B$ corresponding to the  eigenvalues $\lambda_1(\B),\lambda_2(\B),\lambda_3(\B),\ldots$. Take an orthogonal matrix $U$. Then define trial functions
\[
v_j = u_j \circ U^{-1} \circ T
\]
on the domain $\Omega$, where the transformation $T : \Omega \to \B$ was defined in \autoref{sec:setup} . Clearly $v_j \in L^2(\Omega)$ since $T$ has constant Jacobian. One can further show that $v_j$ has weak derivatives in $L^2(\Omega)$, since $u_j$ is smooth with derivatives in $L^2(\B)$ and the Lipschitz continuous mapping $T$ has bounded weak derivatives. Thus $v_j \in H^1(\Omega)$. Further, $v_j=0$ on $\partial \Omega$ because $u_j=0$ on $\partial \B$; more precisely, $v_j \in H^1_0(\Omega)$ because $u_j \in H^1_0(\B)$.

The functions $v_j$ are pairwise orthogonal, since
\begin{align}
\int_\Omega v_j v_k \, dx 
& = \text{Jac} (T^{-1}) \int_\B u_j u_k \, dx \label{eq:pairorthog} \\
& = 0 \notag
\end{align}
whenever $j \neq k$, using here that $u_j$ and $u_k$ are orthogonal and $T^{-1}$ has constant Jacobian. Thus by the Rayleigh--Poincar\'{e} principle, we have
\begin{equation} \label{eq:rayest}
\sum_{j=1}^n \lambda_j(\Omega) \leq \sum_{j=1}^n \frac{\int_\Omega |\nabla v_j|^2 \, dx}{\int_\Omega v_j^2 \, dx} .
\end{equation}

The denominator of this Rayleigh quotient is $\int_\Omega v_j^2 \, dx = \int_\Omega u_j(U^{-1}T(x))^2 \, dx = \text{Jac}(T^{-1})$ by \eqref{eq:pairorthog} with $j=k$, since the eigenfunctions are normalized with $\int_\B u_j^2 \, dx=1$.

For the numerator of the Rayleigh quotient, we write $v=v_j$ and $u=u_j$ (to simplify notation in what follows) and express $v=v(r,\xi)$ and $u=u(s,\zeta)$ in spherical coordinates. Then the relation $v=u \circ U^{-1} \circ T$ says
\[
v(r,\xi) = u\big( r\Gamma(\xi),U^{-1}H(\xi) \big) ,
\]
by recalling that $\Gamma=1/R$ and using the definition \eqref{eq:Tdef} of $T$. Differentiating, we find
\begin{align*}
v_r(r,\xi) & = \Gamma(\xi) u_s \big( r\Gamma(\xi), U^{-1}H(\xi) \big) , \\
(\nabla_{\! \Sp} v)(r,\xi) & = r u_s \big( r\Gamma(\xi), U^{-1}H(\xi) \big) \nabla \Gamma(\xi) \\
& \qquad + (\nabla_{\! \Sp} u) \big( r\Gamma(\xi) , U^{-1} H(\xi) \big) U^{-1} DH(\xi) ,
\end{align*}
where the gradients are regarded as row vectors and by $\nabla_{\! \Sp}$ we mean the gradient with respect to the angular variables. Writing the numerator of the Rayleigh quotient in terms of spherical coordinates gives that
\[
\int_\Omega |\nabla v|^2 \, dx = \int_\Sp \int_0^{R(\xi)} \big( v_r^2 + r^{-2} |\nabla_{\! \Sp} v|^2 \big) \, r^{d-1} dr dS(\xi) .
\]
After changing variable with $s=r\Gamma(\xi) \in (0,1)$ and using the above formulas for $v_r$ and $\nabla_{\! \Sp} v$, we find that
\[
\int_\Omega |\nabla v|^2 \, dx = Q_1 + Q_2 + Q_3
\]
where
\begin{align*}
Q_1 & = \int_\Sp \int_0^1 \big[ \Gamma(\xi)^2 + |\nabla \Gamma(\xi)|^2 \big] \, u_s \big( s,U^{-1}H(\xi) \big)^{\! 2} \, s^{d-1} ds \, R(\xi)^d dS(\xi) , \\
Q_2 & = 2 \int_\Sp \int_0^1 \Gamma(\xi) \nabla \Gamma (\xi)  (DH)^\dagger(\xi) U \\
& \hspace*{3cm} (\nabla_{\! \Sp} u)^\dagger \big( s,U^{-1}H(\xi) \big) u_s \big( s,U^{-1}H(\xi) \big) \, s^{d-2} ds \, R(\xi)^d dS(\xi) , \\
Q_3 & = \int_\Sp \int_0^1 \Gamma(\xi)^2 \big| (\nabla_{\! \Sp} u)(s,U^{-1}H(\xi)) U^{-1} DH(\xi) \big|^2 \, s^{d-3} ds \, R(\xi)^d dS(\xi) .
\end{align*}

The left side of \eqref{eq:rayest} is independent of $U$. Hence by averaging \eqref{eq:rayest} with respect to $U \in O(d)$ we find
\begin{equation} \label{eq:step0}
\sum_{j=1}^n \lambda_j(\Omega) \leq \sum_{j=1}^n \frac{\int_{O(d)} (Q_1+Q_2+Q_3) \, d\gamma(U)}{\text{Jac}(T^{-1})} ,
\end{equation}
where we must remember that ``$u$'' means $u_j$, in the quantities $Q_1,Q_2,Q_3$.

The quantity $Q_1$ is easiest to average because it contains only one ``$U$''. We have
\[
\int_{O(d)} u_s \big( s,U^{-1}H(\xi) \big)^{\! 2} \, d\gamma(U) = \frac{1}{|\Sp|} \int_\Sp u_s(s,\zeta)^2 \, dS(\zeta)
\]
by the equivalence of orbital and spatial means (\autoref{app:averages}). Hence
\begin{align}
& \frac{\int_{O(d)} Q_1 \, d\gamma(U)}{\text{Jac}(T^{-1})} \notag \\
& = \frac{\frac{1}{|\Sp|} \int_\Sp \big[ \Gamma(\xi)^2 + |\nabla \Gamma(\xi)|^2 \big]  \, R(\xi)^d dS(\xi)}{V(\Omega)/V(\B)} \int_0^1 \int_\Sp u_s(s,\zeta)^2 \, dS(\zeta) \, s^{d-1} ds \notag \\
& = G_0(\Omega)  \Big( \frac{V(\B)}{V(\Omega)} \Big)^{\! \! 2/d} \int_\B \Big( \frac{\partial u_j}{\partial s} \Big)^{\! \! 2} \, dx \label{eq:step1}
\end{align}
by the definition \eqref{eq:G0def} of $G_0$.

For $Q_2$ we begin by changing variable with $\xi=H^{-1}(U\zeta)$, which gives that
\begin{align*}
Q_2 & = \int_\Sp \int_0^1 \Gamma \big( H^{-1}(U\zeta) \big) \nabla \Gamma \big( H^{-1}(U\zeta) \big)  (DH)^\dagger \big( H^{-1}(U\zeta) \big) U \\
& \hspace*{4cm} (\nabla_{\! \Sp} u)^\dagger(s,\zeta) u_s(s,\zeta) \, s^{d-2} ds \, \frac{R \big( H^{-1}(U\zeta) \big)^{\! d}}{\text{Jac}_H \big( H^{-1}(U\zeta) \big)} \, dS(\zeta) .
\end{align*}
The integrand contains $U$ in multiple locations, but averaging remains feasible; indeed \autoref{pr:Q23} applied with $F = (\Gamma \nabla \Gamma) R^d/\text{Jac}_H$ shows that
\begin{equation} \label{eq:step2}
\int_{O(d)} Q_2 \, d\gamma(U) = 0 .
\end{equation}

For $Q_3$ we again change variable with $\xi=H^{-1}(U\zeta)$, and find that
\[
Q_3 = \int_\Sp \int_0^1 \Gamma \big( H^{-1}(U\zeta) \big)^2 \big| \nabla_{\! \Sp} u(s,\zeta) U^{-1} DH \big( H^{-1}(U\zeta) \big) \big|^2 \, s^{d-3} ds  \, \frac{R \big( H^{-1}(U\zeta) \big)^{\! d}}{\text{Jac}_H \big( H^{-1}(U\zeta) \big)} \, dS(\zeta) .
\]
In this integrand $U$ appears five times. Nonetheless, we can average $Q_3$ with respect to $U$ by expanding $|\cdots|^2$ and using \autoref{pr:Q23} with $f=\Gamma^2 R^d/\text{Jac}_H$. We find that
\begin{equation} \label{eq:Q3average}
\int_{O(d)} Q_3 \, d\gamma(U) = c \int_\Sp \int_0^1 (\nabla_{\! \Sp} u) (\Id - \zeta \zeta^\dagger)  (\nabla_{\! \Sp} u)^\dagger \, s^{d-3} ds dS(\zeta) 
\end{equation}
where 
\[
c = \frac{1}{|\Sp|} \int_\Sp \Gamma(\xi)^2 \frac{\lVert DH(\xi) \rVert_{HS}^2}{d-1} R(\xi)^d \, dS(\xi) .
\]
[\emph{Aside.} The averaging results \eqref{eq:step2} and \eqref{eq:Q3average} for $Q_2$ and $Q_3$ hold independently of the specific form of the Jacobian of $H$, provided the Jacobian is bounded away from zero (since we want to avoid trouble when we divide by it).]

Note that $(\nabla_{\! \Sp} u) \zeta = 0$, because the spherical gradient $\nabla_{\! \Sp} u$ lies perpendicular to the unit vector $\zeta$. Hence we deduce that
\begin{align}
\frac{\int_{O(d)} Q_3 \, d\gamma(U)}{\text{Jac}(T^{-1})}
& = \frac{V(\B)}{V(\Omega)} \, c \int_\Sp \int_0^1 (\nabla_{\! \Sp} u)(\nabla_{\! \Sp} u)^\dagger \, s^{d-3} ds dS(\zeta) \notag \\
& = G_1(\Omega)  \Big( \frac{V(\B)}{V(\Omega)} \Big)^{\! \! 2/d} \int_\B s^{-2} |\nabla_{\! \Sp} u_j|^2 \, dx \label{eq:step3}
\end{align}
by definition of $G_1$ in \eqref{eq:G1def}. 

Combining \eqref{eq:step0}--\eqref{eq:step3} now shows that
\begin{align}
\sum_{j=1}^n \lambda_j(\Omega)
& \leq \Big( \frac{V(\B)}{V(\Omega)} \Big)^{\! \! 2/d} \sum_{j=1}^n \Big[ G_0(\Omega) \int_\B \big( \frac{\partial u_j}{\partial s} \big)^2 \, dx + G_1(\Omega) \int_\B s^{-2} |\nabla_{\! \Sp} u_j|^2 \, dx \Big] \notag \\
& = \Big( \frac{V(\B)}{V(\Omega)} \Big)^{\! \! 2/d} \sum_{j=1}^n \Big[ (1-\alpha_j) G_0(\Omega) + \alpha_j G_1(\Omega) \Big] \int_\B |\nabla u_j|^2 \, dx \label{eq:partway}
\end{align}
where 
\[
\alpha_j = \frac{\int_\B s^{-2} |\nabla_{\! \Sp} u_j|^2 \, dx}{\int_\B |\nabla u_j|^2 \, dx} , \qquad j=1,2,\ldots,n .
\]
(The coefficient $\alpha_j \in [0,1]$ measures the ``angular component'' of the $j$th energy.) Next we estimate $G_0$ and $G_1$ from above with their maximum $G$, and so conclude that
\[
\sum_{j=1}^n \lambda_j(\Omega) V(\Omega)^{2/d}/G(\Omega) \leq V(\B)^{2/d} \sum_{j=1}^n \int_\B |\nabla u_j|^2 \, dx = \sum_{j=1}^n \lambda_j(\B) V(\B)^{2/d} .
\]
Since $G(\B)=1$, we have proved the theorem in the case that $\Phi(a) \equiv a$ is the identity function.

The theorem now follows for any concave increasing $\Phi$, by Hardy--Littlewood--P\'{o}lya majorization as in Appendix~\ref{sec:major}.

Notice that for the first eigenvalue our proof gives a stronger conclusion, namely using $G_0$ instead of $G$, because the fundamental mode $u_1$ of the ball is a radial function and so $\alpha_1=0$ in the argument above. Thus $\lambda_1 V^{2/d}/G_0$ is maximal for the ball.

For the second Dirichlet eigenvalue, we may divide and multiply by the first eigenvalue to obtain that
\[
\frac{\lambda_2 V^{2/d}}{G_0} = \frac{\lambda_1 V^{2/d}}{G_0} \, \frac{\lambda_2}{\lambda_1}  .
\]
We previously showed that the first factor is maximal for the ball, and the second factor is maximal too, by the sharp Payne--P\'{o}lya--Weinberger result of Ashbaugh and Benguria \cite{AB92}.

\subsection*{Particular cases.} Applying the theorem with $\Phi(a)=a^s$, which is concave and increasing when $0 < s \leq 1$, gives maximality of $(\lambda_1^s + \cdots + \lambda_n^s)^{1/s} \, V^{2/d}/G$ for the ball. The limiting case $s \downarrow 0$ suggests we try choosing $\Phi(a)=\log a$, which yields maximality of the ball for the functional
\[
\sum_{j=1}^n \log (\lambda_j V^{2/d} /G) = n \log \Big( \sqrt[n]{\lambda_1 \cdots \lambda_n} \, V^{2/d}/G \Big) .
\]
When $s<0$ we can choose the concave increasing function $\Phi(a)= - a^s$, which leads to minimality of the ball for
$\sum_{j=1}^n (\lambda_j V^{2/d} /G)^s$. And for $t>0$ we can consider $\Phi(a)=-e^{-at}$, thus obtaining minimality at the ball of $\sum_{j=1}^n \exp(-\lambda_j V^{2/d} t/G)$.

\subsection*{Dirichlet equality statement.} Assume equality holds for the first eigenvalue, that is,
\[
\lambda_1 V^{2/d} /G_0 \big|_\Omega = \lambda_1 V^{2/d} /G_0 \big|_\B .
\]
By enforcing equality in our proof above, we see that the trial function $v_1$ on $\Omega$ must attain equality in the Rayleigh characterization of $\lambda_1(\Omega)$, and hence must be a first eigenfunction for $\Omega$. In particular this holds when the orthogonal matrix $U$ is the identity, so that the function $v_1(x)=u_1(T(x))$ satisfies
\begin{equation} \label{eq:equaleigen}
\Delta v_1 = -\lambda_1(\Omega) v_1 .
\end{equation}

The fundamental Dirichlet mode $u_1$ of the ball is radial, with $u_1(x)=J(|x|)$ for some positive function $J$, and so we have $v_1(x)=J(r/R(\xi))$. That is,
\[
v_1(x) = J(r\Gamma(\xi))
\]
where $\Gamma=1/R$. Note $R$ is $C^2$-smooth by assumption, in this part of the theorem.

The Laplacian of $v_1$ is given in spherical coordinates by
\[
\Delta v_1(x) = J^{\prime \prime}\big(r\Gamma(\xi)\big) \Gamma(\xi)^2 + \frac{d-1}{r} J^\prime\big(r\Gamma(\xi)\big) \Gamma(\xi) + \frac{1}{r^2} \Delta_\Sp [J\big(r\Gamma(\xi)\big)] .
\]
This spherical Laplacian can be computed by the chain rule. It equals
\[
\Delta_\Sp [J\big(r\Gamma(\xi)\big)] = J^{\prime \prime}\big(r\Gamma(\xi)\big) r^2 |\nabla \Gamma(\xi)|^2 + J^\prime\big(r\Gamma(\xi)\big) r \Delta_\Sp \Gamma(\xi) .
\]
We substitute this formula into the preceding one, and make the substitution $s=r\Gamma(\xi)$. Then the eigenfunction equation \eqref{eq:equaleigen} reads:
\[
\big[ \Gamma(\xi)^2 + |\nabla \Gamma(\xi)|^2 \big] J^{\prime \prime}(s) + [(d-1)\Gamma(\xi)^2 + \Gamma(\xi) \Delta_\Sp \Gamma(\xi)] \frac{1}{s} J^\prime(s) = - \lambda_1(\Omega) J(s) .
\]
Integrating over $\xi \in \Sp$ yields that
\[
\lVert \Gamma \rVert_{H^1}^2 J^{\prime \prime}(s) + [(d-1) \lVert \Gamma \rVert_2^2 - \lVert \nabla \Gamma \rVert_2^2] \, \frac{1}{s} J^\prime(s) = - \lambda_1(\Omega) |\Sp| J(s) .
\]
The eigenfunction equation for the unit ball (the case $\Gamma \equiv 1$) says that
\begin{equation} \label{eq:eigenradial}
J^{\prime \prime}(s) + \frac{d-1}{s} J^\prime(s) = - \lambda_1(\B) J(s) .
\end{equation}
We subtract $\lVert \Gamma \rVert_{H^1}^2$ times this equation from the previous equation,
thereby obtaining a first order equation for $J$:
\[
\lVert \nabla \Gamma \rVert_2^2 J^\prime(s) = \frac{1}{d} \big( \lambda_1(\Omega) |\Sp| - \lambda_1(\B) \lVert \Gamma \rVert_{H^1}^2 \big) \, s J(s) , \qquad 0<s<1 .
\]

Suppose $\Gamma \not \equiv \text{const.}$, which ensures that $\lVert \nabla \Gamma \rVert_2 > 0$. Then the last equation for $J$ has the form
\[
J^\prime(s) = asJ(s)
\]
for some $a \in \R$, and so $J^{\prime \prime}(s) = (a^2s^2+a)J(s)$. Substituting these relations into the eigenfunction equation \eqref{eq:eigenradial} implies $a^2s^2+ad=-\lambda_1(\B)$ for all $s \in (0,1)$, and so $a=0$ and then $\lambda_1(\B)=0$. This contradiction tells us that $\Gamma \equiv \text{const.}$, and so $R \equiv \text{const.}$, which means that $\Omega$ is a centered ball.

\section{\bf Neumann eigenvalues --- proof of \autoref{Neigen}}
\label{Neigen_proof}

For Neumann boundary conditions, the Rayleigh quotient and Rayleigh--Poincar\'{e} Principle are just as for the Dirichlet case, except using trial functions in $H^1(\Omega)$ rather than $H^1_0(\Omega)$. Thus we may follow the proof of \autoref{Deigen}, except using Neumann eigenfunctions of the ball instead of Dirichlet eigenfunctions, to prove that
\[
\sum_{j=1}^n \mu_j(\Omega) V(\Omega)^{2/d}/G(\Omega) \leq \sum_{j=1}^n \mu_j(\B) V(\B)^{2/d} .
\]
On each side, the term with $j=1$ may now be discarded because $\mu_1=0$. Then the proof can be completed by majorization.

For the equality statement, rather than adapting the Dirichlet case we present a simpler approach. Suppose equality holds for the first nonzero eigenvalue, that is,
\begin{equation} \label{eq:neumanequal}
\mu_2 V^{2/d} /G \big|_\Omega = \mu_2 V^{2/d} /G \big|_\B .
\end{equation}
Since $\mu_2 V^{2/d} \big|_\Omega \leq \mu_2 V^{2/d} \big|_\B$ by the Szeg\H{o}--Wein\-berger result \cite{W56} (or see \cite[Theorem 7.1.1]{He06}), and since $G(\Omega) \geq 1 = G(\B)$, we conclude from equality holding in \eqref{eq:neumanequal} that $G(\Omega)=1$. Hence $G_0(\Omega)=1$, and so $\Omega$ is a centered ball by the equality statement in \autoref{le:leq1}.

\section{\bf Robin eigenvalues --- proof of \autoref{Reigen}}
\label{Reigen_proof}

The Rayleigh quotient for the Robin problem is
\begin{equation} \label{eq:robinraydef}
\Ray[u] = \frac{\hbar^2 \int_\Omega |\nabla u|^2 \, dx + \int_{\partial \Omega} \sigma u^2 \, dS}{\int_\Omega u^2 \, dx} \qquad \text{for\ } u \in H^1(\Omega) . 
\end{equation}

Let $\{ u_j \}$ be orthonormal eigenfunctions on the unit ball $\B$ that correspond to the Robin eigenvalues $\rho_j \big(\B,\hbar V(\B)^{1/d},\overline{\sigma} V(\B)^{1/d} \big)$, for $j=1,2,3,\ldots$. By constructing trial functions and using the Rayleigh--Poincar\'{e} principle as in the Dirichlet case (\autoref{Deigen_proof}), we find the following analogue of \eqref{eq:rayest}:
\begin{align}
& \sum_{j=1}^n \rho_j \big(\Omega,\hbar V^{1/d} /G^{1/2},\sigma V^{1/d} /G_\Robin^{1/2} \big) \notag \\
& \leq \sum_{j=1}^n \frac{\hbar^2 V^{2/d}}{G} \, \frac{\int_\Omega |\nabla v_j|^2 \, dx}{\int_\Omega v_j^2 \, dx} + \sum_{j=1}^n \frac{V^{1/d}}{G_\Robin^{1/2}} \, \frac{\int_{\partial \Omega} \sigma v_j^2 \, dS}{\int_\Omega v_j^2 \, dx} , \label{eq:RobinRayleigh}
\end{align}
where $v_j=u_j \circ U^{-1} \circ T$. Averaging over $U \in O(d)$ (as explained in \autoref{Deigen_proof} leading up to \eqref{eq:partway}) shows that the first sum in \eqref{eq:RobinRayleigh} is bounded from above by
\begin{equation} \label{eq:firstterms}
\sum_{j=1}^n \hbar^2 V(\B)^{2/d} \int_\B |\nabla u_j|^2 \, dx .
\end{equation}
For the principal eigenvalue ($n=1$) this part of the argument also works with $G_0$ in place of $G$, since $u_1$ is  radial. (When finding $u_1$ by separation of variables, the spherical harmonics with angular dependence cannot arise, because $u_1$ is positive. Hence $u_1$ is radial.)

We will show below that averaging the second sum in \eqref{eq:RobinRayleigh} gives
\begin{equation} \label{eq:secondterms}
\sum_{j=1}^n \overline{\sigma} V(\B)^{1/d} \int_\Sp u_j^2 \, dS .
\end{equation}
The theorem then follows, because adding \eqref{eq:firstterms} and \eqref{eq:secondterms} gives  
\[
\sum_{j=1}^n \rho_j \big(\B,\hbar V(\B)^{1/d},\overline{\sigma} V(\B)^{1/d} \big) .
\]

For \eqref{eq:secondterms} it suffices to consider one value of $j$ at a time, and so we consider an arbitrary function $u \in H^1(\Omega)$ with $L^2$-norm equal to $1$, and write $v=u \circ U^{-1} \circ T$. The second sum in \eqref{eq:RobinRayleigh} has terms of the form
\begin{equation} \label{eq:Robin1}
\frac{V^{1/d}}{G_\Robin^{1/2}} \, \frac{\int_{\partial \Omega} \sigma v^2 \, dS}{\int_\Omega v^2 \, dx}
=  \frac{V^{1/d}}{G_\Robin^{1/2}} \, \frac{\int_{\partial \Omega} \sigma u \big( U^{-1}T(x) \big)^2 \, dS(x)}{V(\Omega)/V(\B)} ,
\end{equation}
where in the denominator we changed variable and used that $T$ has constant Jacobian and $u$ has $L^2$-norm equal to $1$. Averaging the right side of \eqref{eq:Robin1} over matrices $U \in O(d)$ gives (by the equivalence of orbital and spatial means, as in \autoref{app:averages}) the expression
\[
\frac{V(\Omega)^{1/d}}{G_\Robin(\Omega)^{1/2}} \frac{V(\B)}{V(\Omega)}\Big(  \int_{\partial \Omega} \sigma \, dS(x) \Big) \Big( \frac{1}{|\Sp|} \int_\Sp u^2 \, dS \Big) .
\]
This last expression equals $V(\B)^{1/d} \, \overline{\sigma} \int_\Sp u^2 \, dS$ by definition of $G_\Robin$ and $\overline{\sigma}$, proving \eqref{eq:secondterms}.

To prove inequality \eqref{eq:Robinfirst} for the first eigenvalue, apply the theorem with $n=1$ and $G_0$ instead of $G$ (as remarked above), and replace $\hbar$ by $\hbar/V(\B)^{1/d}$ and replace $\sigma$ by $\sigma/V(\B)^{1/d}$.

For the equality statement on the first eigenvalue, one simply adapts the proof of the Dirichlet equality statement in \autoref{Deigen_proof}.

\section{\bf Improvement to the main results}
\label{sec:improvements}

Our main theorems attach the geometric factor $G=\max\{ G_0,G_1 \}$ to each eigenvalue. An inspection of the proofs yields a stronger result, in which each eigenvalue is paired with a smaller geometric factor arising from a convex combination of $G_0$ and $G_1$. We state this improved result below, restricting for simplicity to the case of eigenvalue sums. (The reader can then deduce inequalities on spectral zeta functions and so on, by applying the majorization result from Appendix~\ref{sec:major}.) To simplify the exposition we do not treat the Robin case.

Define a convex combination of the geometric factors by
\[
G(\alpha;\Omega) = (1-\alpha) G_0(\Omega) + \alpha G_1(\Omega) , \qquad \alpha \in [0,1] .
\]

To choose the relevant values of $\alpha$, we fix an orthonormal basis of eigenfunctions $u_1,u_2,u_3,\ldots$ of the unit ball $\B$ corresponding to the Dirichlet eigenvalues $\lambda_1(\B), \lambda_2(\B), \lambda_3(\B), \cdots $. The \emph{radial energy fraction} of the $j$th Dirichlet eigenfunction is defined to be
\[
\e^D_j = \frac{\int_\B (\partial u_j/\partial s)^2 \, dx}{\int_\B |\nabla u_j|^2 \, dx} ,
\]
where $s \in [0,1]$ denotes the radial variable. This energy fraction can be computed explicitly by writing $u_j$ in terms of Bessel functions. Obviously $0 < \e^D_j \leq 1$, with $\e^D_j=1$ if and only if $u_j$ is purely radial.

The \emph{angular energy fraction} is then
\[
\alpha^D_j = 1 - \e^D_j .
\]
For example, the principal Dirichlet mode of the ball is radial, and so $\alpha^D_1=0$.

Similarly, we may define the angular energy fraction $\alpha^N_j$ for the Neumann eigenfunctions of the unit ball.
\begin{theorem}[Improved inequalities] \label{improved}
Assume $\Omega$ is a Lipschitz-starlike domain in $\Rd$. Then the Dirichlet and Neumann eigenvalues satisfy
\begin{align*}
\sum_{j=1}^n \lambda_j(\Omega) V(\Omega)^{2/d} & \leq \sum_{j=1}^n \lambda_j(\B) V(\B)^{2/d} G(\alpha^D_j;\Omega) , \qquad n \geq 1 ,\\
\sum_{j=2}^n \mu_j(\Omega) V(\Omega)^{2/d} & \leq \sum_{j=2}^n \mu_j(\B) V(\B)^{2/d} G(\alpha^N_j;\Omega) , \qquad n \geq 2 .
\end{align*}
\end{theorem}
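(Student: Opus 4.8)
The plan is to revisit the proof of \autoref{Deigen} in \autoref{Deigen_proof} and stop one step early, before the only lossy estimate in that argument. Recall that the averaging computation there produced inequality \eqref{eq:partway}; normalizing the ball eigenfunctions so that $\int_\B u_j^2\,dx=1$, and hence $\int_\B |\nabla u_j|^2\,dx=\lambda_j(\B)$, it reads
\[
\sum_{j=1}^n \lambda_j(\Omega) \leq \Big(\frac{V(\B)}{V(\Omega)}\Big)^{\!2/d}\sum_{j=1}^n\big[(1-\alpha_j)G_0(\Omega)+\alpha_j G_1(\Omega)\big]\lambda_j(\B),
\]
with $\alpha_j=\int_\B s^{-2}|\nabla_{\!\Sp}u_j|^2\,dx\big/\int_\B|\nabla u_j|^2\,dx$. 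The spherical-coordinate splitting $|\nabla u_j|^2=(\partial u_j/\partial s)^2+s^{-2}|\nabla_{\!\Sp}u_j|^2$ shows at once that $\alpha_j=1-\e^D_j=\alpha^D_j$, so the bracketed weight is exactly $G(\alpha^D_j;\Omega)$. Moving $V(\Omega)^{2/d}$ to the left-hand side then gives the first inequality of the theorem verbatim, valid for every orthonormal eigenbasis of $\B$ and every $n\geq 1$.

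For the Neumann statement I would run the identical argument, replacing the Dirichlet eigenfunctions of $\B$ by Neumann ones exactly as in \autoref{Neigen_proof}. None of the averaging identities of \autoref{pr:Q23}, nor the computations of the pieces $Q_1,Q_2,Q_3$, used the boundary condition, so the Neumann analogue of the displayed inequality holds with $\alpha^D_j$ replaced by the Neumann angular energy fraction $\alpha^N_j$. Since $\mu_1(\Omega)=\mu_1(\B)=0$, the $j=1$ terms may be dropped from both sides, which yields the stated Neumann inequality for $n\geq 2$.

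There is no real obstacle here: the content of the theorem is already present, though unadvertised, inside the proofs of \autoref{Deigen} and \autoref{Neigen}, and the only genuine checkpoint is identifying the weight $\alpha_j$ that emerges naturally in \eqref{eq:partway} with the angular energy fraction defined in this section. Two remarks are worth recording. First, $G(\alpha;\Omega)=(1-\alpha)G_0+\alpha G_1\leq\max\{G_0,G_1\}=G$ for every $\alpha\in[0,1]$, so the theorem genuinely refines \autoref{Deigen} and \autoref{Neigen}; and since $\alpha^D_1=0$ one has $G(\alpha^D_1;\Omega)=G_0(\Omega)$, which recovers the sharper $G_0$-normalization for the first eigenvalue noted after those theorems. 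Second, the reason the statement is phrased only for eigenvalue sums, rather than upgraded to spectral zeta functions and heat traces via Appendix~\ref{sec:major}, is that the weights $G(\alpha^D_j;\Omega)$ depend on $j$, so the Hardy--Littlewood--P\'{o}lya majorization argument does not apply directly to the right-hand side.
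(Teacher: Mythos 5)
Your proposal is correct and coincides with the paper's own proof, which simply points back to inequality \eqref{eq:partway} in \autoref{Deigen_proof} and notes that the Neumann case is analogous; the identification $\alpha_j=\alpha^D_j$ via the spherical splitting of $|\nabla u_j|^2$ and the normalization $\int_\B|\nabla u_j|^2\,dx=\lambda_j(\B)$ is exactly the intended argument. Your closing remarks on why the $j$-dependent weights block a direct majorization upgrade are accurate as well.
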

\begin{proof}[Proof of \autoref{improved}]
See \eqref{eq:partway} in the proof of \autoref{Deigen}. The Neumann case is analogous.
\end{proof}

\section{\bf Properties of the geometric factors}
\label{sec:understanding}

\subsection*{$G_1$ does not depend on $H$ in $2$ dimensions}
The quantity $G_1$ defined in \eqref{eq:G1def} depends only on $R$ and not on $H$, in $2$ dimensions, by the following result.
\begin{proposition} \label{le:G2dim}
In dimension $d=2$,
\begin{align}
G_0 & = 1 + \frac{1}{2\pi} \int_0^{2\pi} (\log R)^\prime(\theta)^2 \, d\theta , \label{eq:G2dim1} \\
G_1 & = \frac{\frac{1}{2\pi} \int_0^{2\pi} R(\theta)^4 \, d\theta}{\big( \frac{1}{2\pi} \int_0^{2\pi} R(\theta)^2 \, d\theta \big)^2 } = \frac{2\pi I_\text{origin}}{A^2} , \label{eq:G2dim2}
\end{align}
where $A$ is the area of $\Omega$ and $I_\text{origin}=\int_\Omega |x|^2 \, dA$ is its polar moment of inertia about the origin. Further,
\[
G_\Robin = \frac{L^2}{4\pi A}
\]
where $L$ is the perimeter of $\Omega$.
\end{proposition}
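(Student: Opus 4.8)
\textbf{Proof proposal for \autoref{le:G2dim}.} The plan is to specialize the definitions \eqref{eq:G0def}, \eqref{eq:G1def} and \eqref{eq:GRobindef} to $d=2$ and then recognize the resulting spherical integrals as classical planar quantities. The key simplification is that when $d=2$ the exponent $(d-2)/d$ equals $0$, so the normalizing denominators in $G_0$ and $G_1$ collapse to $1$; moreover $R^{d-2}\equiv 1$, $R^{d-4}=R^{-2}$, $d-1=1$, while $|\Sp|=2\pi$, $V(\B)=\pi$ and $|\partial\B|=2\pi$.

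For $G_0$ I would compute the gradient of the degree-$0$ homogeneous extension of $R$ in polar coordinates: since $R=R(\theta)$ is independent of $r$, one has $\nabla R = r^{-1}R'(\theta)\,\widehat{\theta}$, hence $|\nabla R(\xi)|=|R'(\theta)|$ for $\xi=e^{i\theta}\in\Sp$. Substituting into \eqref{eq:G0def},
\[
G_0 = \frac{1}{2\pi}\int_0^{2\pi}\Big(1+\frac{R'(\theta)^2}{R(\theta)^2}\Big)\,d\theta = 1 + \frac{1}{2\pi}\int_0^{2\pi}\big((\log R)'(\theta)\big)^2\,d\theta,
\]
which is \eqref{eq:G2dim1}. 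The formula for $G_\Robin$ is immediate: plugging $d=2$, $|\partial\Omega|=L$, $V(\Omega)=A$, $|\partial\B|=2\pi$, $V(\B)=\pi$ and $(d-1)/d=1/2$ into \eqref{eq:GRobindef} gives $G_\Robin=\big(\frac{L/A^{1/2}}{2\pi/\pi^{1/2}}\big)^2=\frac{L^2}{4\pi A}$.

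For $G_1$ the first step is to identify $H$ when $d=2$. A bi-Lipschitz homeomorphism of $\Sp$ can be written, after possibly composing with a reflection of the target (which leaves $\lVert DH\rVert_{HS}$ unchanged), as $H(e^{i\theta})=e^{i\phi(\theta)}$ for an increasing bi-Lipschitz lift $\phi$ with $\phi(\theta+2\pi)=\phi(\theta)+2\pi$; then the distortion identity \eqref{eq:distort} becomes $\phi'(\theta)=\text{Jac}_H(\theta)=(\pi/A)R(\theta)^2$, which is precisely the local area-preservation rule $R(\theta)^2\,d\theta\propto d\phi$ of \autoref{fig:transplant}. The second step is to compute the Hilbert--Schmidt norm of the full derivative matrix of the degree-$0$ homogeneous extension of $H$, in the orthonormal polar frame $\{\widehat{r},\widehat{\theta}\}$: homogeneity forces $DH\,\widehat{r}=0$, while $DH\,\widehat{\theta}=r^{-1}\phi'(\theta)(-\sin\phi,\cos\phi)$, so that $\lVert DH\rVert_{HS}^2=\phi'(\theta)^2/r^2$, which on $\Sp$ equals $\phi'(\theta)^2=(\pi/A)^2R(\theta)^4$; equivalently $\lVert DH\rVert_{HS}^2=\text{Jac}_H^2$, since $DH$ has a single nonzero singular value. (This also shows $G_1$ is well defined: the remaining freedom in $H$ is a rotation of the target, which does not affect $\lVert DH\rVert_{HS}$.) Substituting into \eqref{eq:G1def} gives
\[
G_1 = \frac{1}{2\pi}\int_0^{2\pi}\frac{\pi^2}{A^2}R(\theta)^4\,d\theta = \frac{\pi}{2A^2}\int_0^{2\pi}R(\theta)^4\,d\theta.
\]
I would then convert this into the two displayed forms using the elementary polar identities $A=\tfrac12\int_0^{2\pi}R^2\,d\theta$ (so $\tfrac{1}{2\pi}\int_0^{2\pi}R^2\,d\theta=A/\pi$, turning the right-hand side into $\big(\tfrac{1}{2\pi}\int_0^{2\pi}R^4\,d\theta\big)\big/\big(\tfrac{1}{2\pi}\int_0^{2\pi}R^2\,d\theta\big)^2$) and $I_\text{origin}=\int_\Omega|x|^2\,dA=\int_0^{2\pi}\!\int_0^{R(\theta)}r^3\,dr\,d\theta=\tfrac14\int_0^{2\pi}R^4\,d\theta$ (so $G_1=2\pi I_\text{origin}/A^2$), completing \eqref{eq:G2dim2}.

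The only step that requires care is the computation of $\lVert DH\rVert_{HS}^2$: one must differentiate the degree-$0$ homogeneous extension of $H$ rather than just $H|_{\Sp}$, work in a frame adapted to polar coordinates so that the vanishing of the radial component is transparent, and observe that the rotational ambiguity in the choice of $H$ does not affect the Hilbert--Schmidt norm (so $G_1$ indeed depends only on $R$ in two dimensions). Everything else is substitution of $d=2$ together with the standard polar-coordinate formulas for area and polar moment of inertia.
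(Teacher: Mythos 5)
Your proposal is correct and follows essentially the same route as the paper: substitute $d=2$ into the definitions (noting the denominators collapse since $(d-2)/d=0$), write $H(e^{i\theta})=e^{i\phi(\theta)}$, compute $\lVert DH\rVert_{HS}^2=\phi'(\theta)^2$ on the circle from the homogeneous extension, invoke the distortion identity $\phi'=(\pi/A)R^2$, and finish with the polar-coordinate formulas for $A$ and $I_{\text{origin}}$. Your extra remarks on the orthonormal polar frame and on the rotational/reflective ambiguity of $H$ are correct elaborations of steps the paper leaves implicit.
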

These formulas imply immediately that $G_0,G_1,G_\Robin$ are $\geq 1$ and are scale invariant with respect to dilations of the domain, in $2$ dimensions.
\begin{proof}[Proof of \autoref{le:G2dim}]
To prove the first and third formulas, simply substitute $d=2$ into the definitions \eqref{eq:G0def} and \eqref{eq:GRobindef} of $G_0$ and $G_\Robin$.

For the second formula, when $d=2$ the definition \eqref{eq:G1def} of $G_1$ implies
\begin{equation}
\label{eq:c3part}
G_1 = \int_0^{2\pi} \lVert DH( \begin{smallmatrix} \cos \theta \\ \sin \theta \end{smallmatrix} ) \rVert_{HS}^2 \, d\theta \big/ 2\pi .
\end{equation}
The homeomorphism $H$ of the unit circle can be written $H(\begin{smallmatrix} \cos \theta \\ \sin \theta \end{smallmatrix})=(\begin{smallmatrix} \cos \phi(\theta) \\ \sin \phi(\theta) \end{smallmatrix})$. Homogeneity of $H$ then gives $H(\begin{smallmatrix} x_1 \\ x_2 \end{smallmatrix}) = ( \begin{smallmatrix} \cos \phi(\theta) \\ \sin \phi(\theta) \end{smallmatrix} )$ where $\theta=\arg(x_1+ix_2)=\arctan(x_2/x_1)$. Calculating the derivative matrix $DH(\begin{smallmatrix} x_1 \\ x_2 \end{smallmatrix})$ results in
\[
\lVert DH( \begin{smallmatrix} \cos \theta \\ \sin \theta \end{smallmatrix} ) \rVert_{HS}^2 = \phi^\prime(\theta)^2 .
\]
Further, the distortion formula \eqref{eq:distort} for $H$ says in $2$ dimensions that
\[
\phi^\prime(\theta)^2 = \Big( \frac{A(\B)}{A(\Omega)} R(\theta)^2 \Big)^{\! 2}.
\]
Substituting the last two formulas into \eqref{eq:c3part} shows that
\[
G_1 = \frac{\int_0^{2\pi} R(\theta)^4 \, d\theta \big/ 2\pi}{\big(A(\Omega)/\pi \big)^2} .
\]
Now \eqref{eq:G2dim2} follows by evaluating area and moment of inertia in polar coordinates.
\end{proof}

\subsection*{Expressing $G_0$ as a support-type functional, in all dimensions}
The geometric meaning of $G_0$ is highlighted by:
\begin{lemma}[Equivalence of our $G_0$ with the definitions of P\'{o}lya--Szeg\H{o} and Freitas--Krej{\v{c}}i{\v{r}}{\'{\i}}k] \label{le:alt}
If $\Omega$ is a Lipschitz-starlike domain then
\[
G_0 = \frac{1}{ |\Sp|} \int_{\partial \Omega} \frac{1}{x \cdot N(x)} \, dS(x) \Big( \frac{V(\B)}{V(\Omega)} \Big)^{\! (d-2)/d}.
\]
Thus in $2$ dimensions,
\begin{equation} \label{eq:equiv2dim}
G_0 = \frac{1}{2\pi} \int_{\partial \Omega} \frac{1}{x \cdot N(x)} \, ds(x) .
\end{equation}

Hence if $\Omega$ is convex then $G_0 \leq (V/V_{in})^{2/d}$, where $V_{in}$ is the volume of the largest open ball centered at the origin and contained in $\Omega$. .
\end{lemma}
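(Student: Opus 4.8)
The plan is to rewrite the boundary integral $\int_{\partial\Omega}\frac{1}{x\cdot N(x)}\,dS(x)$ as an integral over the unit sphere $\Sp$ via the radial parametrization $x=R(\xi)\xi$, $\xi\in\Sp$, and then recognize the outcome as the numerator of \eqref{eq:G0def}. The two-dimensional statement \eqref{eq:equiv2dim} is then just the case $d=2$, and the convexity bound follows by estimating $x\cdot N(x)$ from below by the inradius about the origin.

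\textbf{The two local identities.} First I would compute the outward normal. Extend $R$ homogeneously of degree $0$ as in \autoref{sec:setup}, so that $\partial\Omega$ is the zero set of $g(y)=|y|-R(y)$ with $g>0$ outside $\Omega$; hence $N=\nabla g/|\nabla g|$. At $x=R(\xi)\xi$ one has $\nabla|y|\big|_{y=x}=\xi$, while $\nabla R(x)=R(\xi)^{-1}\nabla R(\xi)$ because $\nabla R$ is homogeneous of degree $-1$; moreover $\nabla R(\xi)$ is tangent to $\Sp$, hence orthogonal to $\xi$. Therefore $\nabla g(x)=\xi-R(\xi)^{-1}\nabla R(\xi)$, which has length $R(\xi)^{-1}\sqrt{R(\xi)^2+|\nabla R(\xi)|^2}$, and taking the dot product with $x=R(\xi)\xi$ gives
\[
x\cdot N(x)=\frac{R(\xi)^2}{\sqrt{R(\xi)^2+|\nabla R(\xi)|^2}} .
\]
Next, either a direct Jacobian computation for $\xi\mapsto R(\xi)\xi$ or the infinitesimal-cone (solid angle) identity $dS(\xi)=\big(x\cdot N(x)\big)R(\xi)^{-d}\,dS(x)$ shows that $dS(x)=R(\xi)^d\big(x\cdot N(x)\big)^{-1}\,dS(\xi)$ on $\partial\Omega$. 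Combining the last two facts,
\[
\frac{1}{x\cdot N(x)}\,dS(x)=\frac{R(\xi)^d}{\big(x\cdot N(x)\big)^2}\,dS(\xi)=\big[\,R(\xi)^{d-2}+R(\xi)^{d-4}|\nabla R(\xi)|^2\,\big]\,dS(\xi),
\]
which is exactly the density occurring in the numerator of \eqref{eq:G0def}. Since $\int_\Sp R^d\,dS=dV(\Omega)$ and $|\Sp|=dV(\B)$, the denominator of \eqref{eq:G0def} equals $\big(V(\Omega)/V(\B)\big)^{(d-2)/d}$, and the first displayed formula of the lemma follows. Setting $d=2$ makes the volume factor carry exponent $0$, yielding \eqref{eq:equiv2dim}.

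\textbf{The convexity bound.} Suppose $\Omega$ is convex (and, as always, starlike about the origin), and let $\rho_{in}$ be the radius of $\D_{in}$, so $V_{in}=V(\B)\rho_{in}^d$. For each $x\in\partial\Omega$ the supporting hyperplane at $x$ lies at distance $x\cdot N(x)$ from the origin and cannot meet the open ball $\D_{in}\subseteq\Omega$, so $x\cdot N(x)\ge\rho_{in}$; hence $\int_{\partial\Omega}\frac{1}{x\cdot N(x)}\,dS(x)\le|\partial\Omega|/\rho_{in}$. On the other hand the divergence theorem gives $dV(\Omega)=\int_{\partial\Omega}x\cdot N(x)\,dS(x)\ge\rho_{in}|\partial\Omega|$, so $|\partial\Omega|\le dV(\Omega)/\rho_{in}$ and therefore $\int_{\partial\Omega}\frac{1}{x\cdot N(x)}\,dS(x)\le dV(\Omega)/\rho_{in}^2$. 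Plugging this into the formula just proved, and using $d/|\Sp|=1/V(\B)$, gives $G_0\le V(\Omega)^{2/d}/\big(V(\B)^{2/d}\rho_{in}^2\big)=(V(\Omega)/V_{in})^{2/d}$, as claimed.

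\textbf{Main obstacle.} The computations are routine; the only point requiring care is the bookkeeping of the homogeneous extension of $R$ (degree $0$) and of its gradient (degree $-1$, tangent to $\Sp$), so that the radial and angular parts of $\nabla g$ are kept straight when deriving the two local identities. I expect that step, together with verifying the surface-measure (solid angle) identity, to be the main though not serious difficulty.
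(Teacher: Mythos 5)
Your proposal is correct and follows essentially the same route as the paper: radially parametrize $\partial\Omega$, compute $x\cdot N(x)=R^2/\sqrt{R^2+|\nabla R|^2}$ and $dS(x)=R^{d-2}\sqrt{R^2+|\nabla R|^2}\,dS(\xi)$ from the homogeneous extension of $R$, and for the convex case combine $x\cdot N(x)\ge R_{in}$ with the divergence theorem. The only differences (using $|y|-R(y)$ rather than $|y|^2-R(y)^2$ as the level-set function, obtaining the surface-measure identity via the solid-angle formula, and splitting the convexity estimate into two steps instead of the paper's single pointwise inequality $1/(x\cdot N)\le (x\cdot N)/R_{in}^2$) are cosmetic.
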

P\'{o}lya and Szeg\H{o}'s calculations already prove the lemma in dimension $2$ (see \cite[p.~92]{PS51}), but the lemma is new in higher dimensions because Freitas and Krej{\v{c}}i{\v{r}}{\'{\i}}k proceeded along somewhat different lines in their proof.
\begin{proof}[Proof of \autoref{le:alt}]
Our first task is to evaluate $x \cdot N(x)$ in terms of the radius function. The boundary of $\Omega$ is the level set $\{ x : |x|^2=R(x)^2 \}$ and so taking the gradient gives an outward normal vector $n(x) = x-R(x)\nabla R(x)$. We evaluate at $x=R(\xi)\xi \in \partial \Omega$ to obtain $n(x) = R(\xi)\xi-R(\xi) \nabla R \big(R(\xi)\xi \big)$. The homogeneity relation $R(r\xi)=R(\xi)$ implies that $r\nabla R(r\xi)=\nabla R(\xi)$ for each $r>0$, and so $n(x) = R(\xi)\xi-\nabla R(\xi)$. Thus for the unit normal $N(x)=n(x)/|n(x)|$ we compute
\[
x \cdot N(x) = \frac{R(\xi)^2}{\sqrt{R(\xi)^2+|\nabla R(\xi)|^2}} .
\]
where we used that $\xi \cdot \nabla R(\xi) = 0$ (by homogeneity of $R$).

Next we need a formula for surface area element on the boundary of $\Omega$:
\[
dS(x) = R(\xi)^{d-2} \sqrt{R(\xi)^2+|\nabla R(\xi)|^2} \, dS(\xi) ,
\]
as one proves straightforwardly by parameterizing $\partial \Omega$ as $\{ x=R(\xi) \xi : \xi \in \Sp \}$.

By substituting the preceding formulas into the formula for $G_0$ in the lemma, we see that it reduces to the definition of $G_0$ in \autoref{sec:setup}.

Now write $\B_{in}$ for the ball of volume $V_{in}$ centered at the origin, and write $R_{in}$ for its radius. If $\Omega$ is convex then $R_{in} \leq x \cdot N(x)$ for all $x \in \Omega$, as one sees by considering a support plane at $x$, and so
\[
\frac{1}{x \cdot N(x)} \leq \frac{x \cdot N(x)}{R_{in}^2} .
\] 
By integrating over $\partial \Omega$ and using the formula in the lemma for $G_0$, on the left side, and the divergence theorem on the right side, we find that $G_0 \leq (V/V_{in})^{2/d}$. 
\end{proof}

\subsection*{Evaluation of $G_0$ for polygons with an inscribed circle}
If $\Omega$ is a triangle with incenter at the origin, or more generally if $\Omega$ is any polygon with an inscribed circle centered at the origin, then the geometric factor $G_0$ can be evaluated in terms of area and perimeter. For such domains $G_0=L^2/4\pi A=G_\Robin$, as was proved by Aissen \cite[Theorem~1]{A58}. To prove this fact observe that $x \cdot N(x)$ equals the inradius, for each point $x$ on the boundary. Hence formula \eqref{eq:equiv2dim} gives that $G_0$ equals $L/2\pi$ divided by the inradius, which evaluates to $L^2/4\pi A$ because $A=\frac{1}{2} L R_{in}$ (by triangulating the domain with respect to the origin).

\subsection*{Proof that $G_0 \geq G_\Robin \geq 1$ and $G_1 \geq 1$ (for Lemmas~\ref{le:leq1} and \ref{le:leq1Robin})} \

We have from \autoref{le:alt} and Cauchy--Schwarz that
\begin{align}
G_0
& \geq \frac{1}{|\Sp|} \frac{|\partial \Omega|^2}{\int_{\partial \Omega} x \cdot N(x) \, dS(x)} \Big( \frac{V(\B)}{V(\Omega)} \Big)^{\! (d-2)/d} \label{eq:AissenCS} \\
& = \frac{1}{|\Sp|} \frac{|\partial \Omega|^2}{V(\Omega) d} \Big( \frac{V(\B)}{V(\Omega)} \Big)^{\! (d-2)/d}  = G_\Robin \notag
\end{align}
by the divergence theorem (noting $\nabla \cdot x \equiv d$), and by definition of $G_\Robin$ in \eqref{eq:GRobindef}. Note that if equality holds then $x \cdot N(x)$ is constant, by the equality conditions for Cauchy--Schwarz.

Further, $G_\Robin \geq 1$ by the isoperimetric inequality, as remarked after \eqref{eq:GRobindef}, with equality if and only if $\Omega$ is a ball.

Hence $G_0 \geq G_\Robin \geq 1$, and $G_0=1$ if and only if $\Omega$ is a centered ball.

(\emph{Aside.} This inequality was established in $2$ dimensions by Aissen \cite[Theorem~1]{A58}.)

Now we prove $G_1 \geq 1$. By applying the quadratic-geometric mean inequality to the nonzero singular values of $DH$ we deduce that
\[
\frac{\lVert DH(\xi) \rVert_{HS}^2}{d-1}
\geq \text{Jac}_H(\xi)^{\! 2/(d-1)}
= \Big( \frac{V(\B)}{V(\Omega)} \Big)^{2/(d-1)} R(\xi)^{2d/(d-1)} ,
\]
where the last step uses the distortion formula for $H$ in \eqref{eq:distort}. Substituting this estimate into the definition of $G_1$ in \eqref{eq:G1def} shows that
\begin{align*}
G_1
& \geq \Big( \frac{V(\B)}{V(\Omega)} \Big)^{2/(d-1)} \Big( \frac{1}{|\Sp|} \int_\Sp (R^d)^{1+2/d(d-1)} \, dS \Big) \Big/ \Big( \frac{1}{|\Sp|} \int_\Sp R^d \, dS \Big)^{(d-2)/d} \\
& \geq \Big( \frac{V(\B)}{V(\Omega)} \Big)^{2/(d-1)} \Big( \frac{1}{|\Sp|} \int_\Sp R^d \, dS \Big)^{\! 2/(d-1)}
\end{align*}
by Jensen's inequality. Since $\int_\Sp R^d \, dS = V(\Omega)d$ and $|\Sp| = V(\B)d$, we deduce that $G_1 \geq 1$.

If $G_1=1$ then $R$ is constant (by the equality conditions for Jensen), and so $\Omega$ is a centered ball. Then $H$ maps $\Sp$ to $\Sp$ with constant distortion $\text{Jac}_H \equiv 1$, by \eqref{eq:distort}. In $2$ dimensions that is enough to imply $H$ is an orthogonal transformation (cf.~\autoref{sec:Hexist}). So suppose $d \geq 3$. Note the nonzero singular values of $DH$ all have equal magnitude (and hence have magnitude $1$), by the equality conditions for the quadratic-geometric mean inequality. Therefore by the singular value decomposition, $DH$ acts as an orthogonal matrix on the tangent space, at almost every point of the sphere. Liouville's theorem \cite{IM01} implies that $H$ is a M\"{o}bius transformation that fixes the sphere. Since also the Jacobian of $H$ equals $1$ identically on the sphere, we conclude that $H$ is an orthogonal transformation.

\subsection*{Good choices of origin}
What is a good choice of origin within the domain, given that for \autoref{Deigen} we would like to make the geometric factors $G_0$ and $G_1$ as small as possible?

To minimize $G_1$ one should choose the origin at the center of mass, because \autoref{le:G2dim} expresses $G_1$ in terms of moment of inertia (at least in $2$ dimensions).

The center of mass is not generally the best choice of origin for $G_0$. For a polygon with an inscribed circle, $G_0$ is minimal when the origin coincides with the center of the circle; this observation is due to Aissen \cite[\S3]{A58}, with the key step being the use of Cauchy--Schwarz as in \eqref{eq:AissenCS}. For a triangle, for example, the inscribed circle is centered where the angle bisectors intersect, which can be quite far from the center of mass (as happens for a thin acute isosceles triangle). Thus in general one cannot hope to minimize both $G_0$ and $G_1$ with a single choice of origin. An exception is for domains having two axes of symmetry, in which case both factors are minimized when the origin is at the intersection (for $G_1$ because the intersection point is the centroid, and for $G_0$ by work of Aissen \cite[Cor.~1,2,3]{A58}).

The question of whether $G_0$ or $G_1$ is larger can be subtle to resolve. For example, consider the ellipse with semi-axes $3$ and $1$, shown in \autoref{fig1}. Choosing the origin at the center would minimize both factors, and in fact would make them equal (as one finds by direct computation). Nearby choices of origin, though, could lead to either $G_0$ or $G_1$ being larger. Thus for domains like perturbed ellipses it is unclear which factor will dominate, until computations have been performed. 

\begin{figure}[t]
  \begin{center}
\includegraphics[scale=0.235]{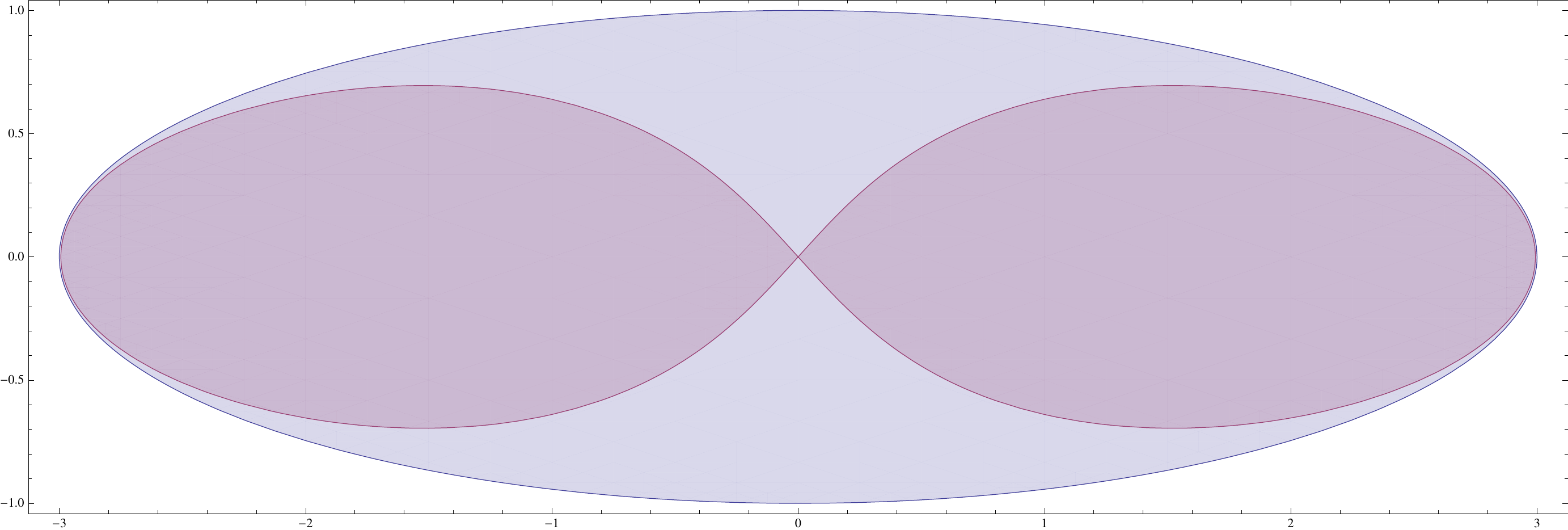}
  \end{center}
  \caption{Choices of origin in the dark shaded region give $G_0<G_1$.}
  \label{fig1}
\end{figure}

\section{\bf Existence of homeomorphism $H$ --- Proof of \autoref{homexist}}
\label{sec:Hexist}

Here we construct a bi-Lipschitz homeomorphism $H : \Sp \to \Sp$ with specified Jacobian determinant, as required in \autoref{sec:setup}. We apply the construction to ellipsoids in the next section, in order to better understand the geometric factor $G_1$.

Write $K(\xi) = R(\xi)^d V(\B)/V(\Omega)$ for the desired Jacobian determinant of $H$, which one regards as the mass density for the $H$-pullback of the uniform mass density on the sphere. The following construction of $H$ remains valid whenever $K$ is continuous and positive with $\int_\Sp K(\xi) \, d\xi = |\Sp|$; the specific form of $K$ is irrelevant. 

\subsection*{Two dimensions, $d=2$}
In two dimensions we may regard $K$ as a $2\pi$-periodic function of an angle $\theta$. The Jacobian condition ${\text Jac}_H=K$ says $H^\prime(\theta) = K(\theta)$, which we satisfy by defining
\[
H(\theta) = \int_0^\theta K(\omega) \, d\omega .
\]
Notice $H$ increases by $2\pi$ each time $\theta$ increases by $2\pi$, since $\int_0^{2\pi} K(\omega) \, d\omega = |{\mathbb S}^1|=2\pi$. Also, $H^\prime=K$ is continuous, and is bounded above and below away from $0$. Hence $H$ defines a $C^1$-diffeomorphism of the circle.

Incidentally, this construction shows that $H$ is uniquely determined on the circle, except for post-rotations (adding a constant to $H$). We need not consider reflections because $H$ has positive Jacobian and so it must be orientation preserving.

\subsection*{Three dimensions, $d=3$: the latitude--longitude construction}
Let $(\theta_1,\theta_2)$ be the standard spherical coordinates, with $0 \leq \theta_1 \leq \pi$ and $0 \leq \theta_2 \leq 2\pi$. We assume $H$ has the form 
\[
 H(\theta_1,\theta_2)  = \big (f(\theta_1),g(\theta_1,\theta_2) \big) ,
\]
which means that each line of latitude ($\theta_1=\text{const.}$) is mapped to another line of latitude, and the longitudinal position is transformed by $g$. We will first determine $f$ by studying how the spacing between lines of latitude must be distorted, and then will determine $g$ by applying the earlier $2$-dimensional method on each line of latitude.

Fix the north and south poles (meaning $f(0)=0,f(\pi)=\pi$) and require that $g$ increase by $2\pi$ for each trip around a line of latitude (meaning $g(\cdot,\cdot+2\pi)-g=2\pi$). Note that 
\begin{align}
-(\cos f(\theta_1))^\prime g_{\theta_2}(\theta_1,\theta_2) 
& = \big( \sin f(\theta_1) \big) f^\prime(\theta_1) g_{\theta_2}(\theta_1,\theta_2) \notag \\
& = K(\theta_1,\theta_2) \sin \theta_1 , \label{eq:theta2eq}
\end{align}
by the Jacobian condition ${\text Jac}_H=K$. Integrating over $\theta_2 \in [0,2\pi]$ gives an equation involving only $\theta_1$:
\begin{equation} \label{eq:theta1eq}
-(\cos f(\theta_1))^\prime = \Big( \frac{1}{2\pi} \int_0^{2\pi} K(\theta_1,\theta_2) \, d\theta_2 \Big) \sin \theta_1 .
\end{equation}
This equation can be solved for $\cos f(\theta_1)$ by direct integration, using the north pole condition $\cos f(0)=1$. (The south pole condition $\cos f(\pi)=-1$ then follows automatically, since we have $\int_0^\pi \int_0^{2\pi} K \sin \theta_1 \, d\theta_2 d\theta_1 = \int_{\Sp^2} K \, dS = |{\mathbb S}^2|=4\pi$.) Next, we substitute \eqref{eq:theta1eq} into the left side of \eqref{eq:theta2eq} to get an equation for $g_{\theta_2}$ that can be integrated directly to obtain $g$; we fix the constant of integration by requiring $g(\theta_1,0)=0$ (which means geometrically that $H$ fixes the prime meridian). One checks easily from \eqref{eq:theta2eq} and \eqref{eq:theta1eq} that the construction gives $g(\theta_1,2\pi)=2\pi$ as required. 

The above construction guarantees $f^\prime>0$ and $g_{\theta_2}>0$ away from the poles, and one can check that the resulting $H$ gives a bi-Lipschitz homeomorphism of the sphere.

\smallskip
\noindent \emph{Remark.} The point of this section is to provide a construction of $H$ that can be implemented in practical examples. Many other homeomorphisms also satisfy the Jacobian condition \eqref{eq:distort}. See Dacorogna and Moser \cite{DM90} for an account of the amazingly varied possibilities. 

\subsection*{Higher dimensions}
In dimensions $4$ and higher, one extends the $3$-dimensional construction by means of generalized spherical coordinates. Induction on the dimension provides the analogue of $g$ on lower dimensional ``latitudinal spheres''. We leave the details to the reader.

\section{\bf Ellipsoidal examples and the geometric factor $G_1$}
\label{sec:ellipsoidex}

The homeomorphism $H : \Sp \to \Sp$ constructed in the preceding section induces a volume-preserving map $T : \Omega \to \B$, as defined in \autoref{sec:setup}. For ellipsoids one could alternatively use the linear map provided by a matrix $M$ with $M(E)=\B$. Which of these two maps will give a better estimate on the eigenvalues in \autoref{Deigen}? That is, which will give a smaller value for the geometric factor $G_1$?

For the linear map one has
\begin{align*}
  T(r\xi)&=M(r\xi) = r M\xi , \\
  R(\xi)&=\frac1{|M\xi|}, \qquad 
  H(\xi)=\frac{M\xi}{|M\xi|}.
\end{align*}
Extending $R$ and $H$ to be homogeneous functions gives 
\[
  R(x)=\frac{|x|}{|Mx|} , \qquad  H(x)=\frac{Mx}{|Mx|} .
\]
Somewhat tedious calculations then show that $G_0=G_1=[V(E)/V(\B)]^{2/d} \lVert M \rVert_{HS}^2 /d$. Thus for ellipsoids we recover our earlier results about linear transformations \cite{LS11b,LS11c}. (Those papers treat more general domains than just ellipsoids, of course.)

Let us now compare this ``linear'' map $T$ with the map $T$ constructed from ``spherical coordinates'' as in the previous section. \autoref{tab1} shows the values of $G_1$ in the linear case, and also shows numerical values from the spherical coordinates construction, for various choices of north pole. Notice in the table that if the ellipsoid has two equal semi-axes (so that it is a body of revolution), and if we take the remaining axis as the north pole, then we obtain the same value for $G_1$ as in the linear case. That equality no longer holds for a generic ellipsoid with unequal semi-axes, and in general the linear construction gives better results than the spherical coordinates one does. 

These observations provide some guidance as to how to choose the north pole when constructing $H$ by the spherical coordinates method, for an arbitrary starlike domain. 

\begin{table}[t]
\begin{center}
  \begin{tabular}{ccccc}
  \toprule
semiaxes (a,b,c) & linear: $G_1=G_0$ & North a & North b & North c\\
\midrule
(1,1,1)& 1 & 1 & 1 & 1\\
(1,1,2)& 1.19055 & 1.24002 & 1.24002 & 1.19055\\
(1,2,2)& 1.25992 & 1.25992 & 1.32057 & 1.32057\\
(1,2,3)& 1.49810 & 1.51620 & 1.73826 & 1.53697\\
\bottomrule
  \end{tabular}
\end{center}
 \vspace*{10pt}
\caption{Values of $G_1$ for the ``linear'' construction of $T$, and for various choices of north pole in the ``spherical coordinates'' construction of $T$.}
  \label{tab1}
\end{table}

\section{\bf Sloshing problem}
\label{sec:sloshing}

We finish the paper by transferring our results to ``sloshing eigenvalues''. On a cylinder $C = \Omega\times[-L,0]$, we consider the following two eigenvalue problems:
\begin{align*}
  \Delta u = 0 & \quad \text{in $C$,}\\
  u=0 & \quad \text{on $\partial C\setminus \Omega$,} \\
  \frac{\partial u}{\partial n}=\widetilde{\lambda} u & \quad \text{on $\Omega$,}
\end{align*}
and
\begin{align*}
  \Delta u &= 0 \quad \text{in $C$,}\\
  \frac{\partial u}{\partial n}&=0 \quad \text{on $\partial C\setminus \Omega$,} \\
  \frac{\partial u}{\partial n}&=\widetilde{\mu} u \quad \text{on $\Omega$.}
\end{align*}
See \autoref{fig:sloshing}. The second eigenvalue problem describes frequencies of sloshing of a fluid in the special case of a cylindrical ``glass'' with uniform cross-sections. (See \cite{FK83} for a historical review and \cite{BKPS,Ib05,KKw,KKu09,KKu11} for recent developments.) 

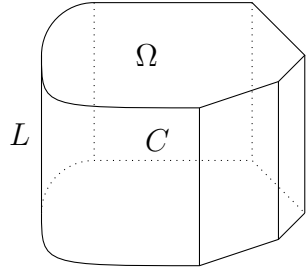
\begin{figure}
\begin{center}
\begin{tikzpicture}[scale=0.7]
  \draw (-1,0) arc (180:90:1) -- (3,1) -- (4,0) -- (3.5,-0.5) -- (2,-1) .. controls (-1,-1) .. (-1,0);
  \draw[yshift=-3cm, dotted] (-1,0) arc (180:90:1) -- (3,1) -- (4,0);
  \draw[yshift=-3cm] (4,0) -- (3.5,-0.5) -- (2,-1) .. controls (-1,-1) .. (-1,0);
  \draw (-1,0) -- (-1,-3) node [left,pos=0.5] {$L$};
  \draw[dotted] (0,1) -- (0,-2);
  \draw[dotted] (3,1) -- (3,-2);
  \draw (4,0) -- (4,-3);
  \draw (3.5,-0.5) -- (3.5,-3.5);
  \draw (2,-1) -- (2,-4);
  \draw (1.2,-1.2) node [below] {$C$};
  \draw (1,0) node {$\Omega$};
%
\end{tikzpicture}
\end{center}
\caption{A sloshing cylinder $C$ with height $L$ and starlike cross-section $\Omega$.
}
\label{fig:sloshing}
\end{figure}

When we want to emphasize the dependence on the cylinder depth, we write the eigenvalues as $\widetilde{\lambda}_j(L)$ and $\widetilde{\mu}_j(L)$. The eigenvalues are determined by separating the vertical and horizontal variables. One finds 
\[
  \widetilde{\lambda}_j(L)=\sqrt{\lambda_j}\coth(\sqrt{\lambda_j}L), \qquad 
  \widetilde{\mu}_j(L)=\sqrt{\mu_j}\tanh(\sqrt{\mu_j}L) ,
\]
where $\lambda_j$ and $\mu_j$ are the Dirichlet  and Neumann eigenvalues of $\Omega$. 
The functions $\Phi_D(a) = \sqrt{a}\coth(\sqrt{a} L)$ and $\Phi_N(a) = \sqrt{a}\tanh(\sqrt{a} L)$ are concave increasing, and so majorization (\autoref{sec:major}) extends our theorems on Dirichlet and Neumann eigenvalues of the Laplacian to sloshing eigenvalues. The Neumann sloshing conclusion, in $2$ dimensions, is that the normalized eigenvalue sum
\[
\sum_{j=2}^n \widetilde{\mu}_j \Big(L\sqrt{\frac{A}{G}} \Big) \sqrt{\frac{A}{G}} = \sum_{j=2}^n \Phi_N \big( \mu_j A/G \big)
\]
is maximal for the disk, for each $L>0$. (Notice here that the depth $L\sqrt{A/G}$ of the cylinder depends on the area and geometric factor of the cross-section.) One can then extend to more general functionals of the $\widetilde{\mu}_j$, by performing a second majorization. 

These methods handle only sloshing in \emph{cylindrical} glasses, although one can use domain monotonicity (see \cite{BKPS}) to obtain bounds for some other shapes of glass. It would be interesting to prove sharp bounds on eigenvalues of \emph{non}-cylindrical sloshing regions, by comparing somehow with a domain having rotational symmetry about the vertical axis, as in \cite{KKw}. 

\section*{Acknowledgments}
This work was partially supported by a grant from the Simons Foundation (\#204296 to Richard Laugesen), and travel funding from the University of Oregon.

We thank Michiel van den Berg for asking about perturbations of the ball and alerting us to his recent work \cite{vdB12}. Our \autoref{perturb2} and \autoref{perturbhigher} resulted from his question. Thanks go also to Julie Clutterbuck, for suggesting we consider nonconstant Robin parameters in \autoref{Reigen}. We are grateful to CIRM--Luminy and MFO-Oberwolfach for funding the stimulating workshops on ``Shape Optimization Problems and Spectral Theory" (May 2012) and ``Geometric Aspects of Spectral Theory'' (July 2012), respectively, during which these conversations took place.

Lorenzo Brasco pointed out how to deduce the estimate on the second eigenvalue (in \autoref{Deigen}) from  Ashbaugh and Benguria's sharp PPW inequality. The de Giorgi Center at the Scuola Normale in Pisa generously supported our participation in the meeting on ``New Trends in Shape Optimization'' (July 2012), at which this paper was completed. Mark Ashbaugh has our thanks for correcting several misstatements about the Robin eigenvalues.

\appendix

\section{\bf Majorization}
\label{sec:major}

To extend from eigenvalue sums to sums of concave functions of eigenvalues we use:
\begin{proposition} \label{pr:major}
Assume $\{ a_j \}$ and $\{ b_j \}$ are increasing sequences of positive real numbers. Then the following statements are equivalent:

(i) $\sum_{j=1}^n a_j \leq \sum_{j=1}^n b_j$ for each $n \geq 1$.

(ii) $\sum_{j=1}^n \Phi(a_j) \leq \sum_{j=1}^n \Phi(b_j)$ for each $n \geq 1$ and all concave increasing functions $\Phi : \R_+ \to \R$.
\end{proposition}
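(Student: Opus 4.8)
The plan is to prove the two implications separately. The direction (ii)$\Rightarrow$(i) is immediate: apply (ii) with the concave (indeed affine) increasing function $\Phi(x)=x$, which reproduces (i) verbatim.

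For (i)$\Rightarrow$(ii), fix $n$ and observe that the key special case is the family of ``ramp'' functions $\Phi_c(x)=-(c-x)_+$ for a parameter $c>0$, each of which is concave and increasing. For these I would use the elementary identity that, for any increasing finite sequence $0<x_1\le\dots\le x_n$,
\[
\sum_{j=1}^n (c-x_j)_+ \;=\; \max_{0\le k\le n}\ \sum_{j=1}^k (c-x_j),
\]
which holds because the partial sums $k\mapsto\sum_{j=1}^k(c-x_j)$ (empty sum $=0$) have nonincreasing increments $c-x_{k+1}$, hence are concave in $k$ and maximized at the last index with $x_k\le c$, where the value is exactly $\sum_{j:\,x_j\le c}(c-x_j)$. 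Applying this to the prefixes of $\{a_j\}$ and of $\{b_j\}$ and comparing term by term,
\[
\sum_{j=1}^n (c-b_j)_+ \;=\; \max_{0\le k\le n}\Bigl[kc-\sum_{j=1}^k b_j\Bigr]\;\le\;\max_{0\le k\le n}\Bigl[kc-\sum_{j=1}^k a_j\Bigr]\;=\;\sum_{j=1}^n (c-a_j)_+,
\]
where the inequality is just hypothesis (i) applied separately in each of the finitely many terms. Equivalently, $\sum_{j=1}^n\min(a_j,c)\le\sum_{j=1}^n\min(b_j,c)$ for every $c>0$.

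Next I would reduce a general concave increasing $\Phi:\R_+\to\R$ to these building blocks via an integral representation. Since statement (ii) only involves the values of $\Phi$ on $[\delta,\infty)$, where $\delta=\min\{a_1,b_1\}>0$, I may first replace $\Phi$ on $(0,\delta)$ by the affine function through $(\delta,\Phi(\delta))$ of slope $\Phi'(\delta^+)$: the result is still concave and increasing, agrees with $\Phi$ on $[\delta,\infty)$, and is finite at the origin. Subtracting the (now finite) value at $0$ is harmless, since it shifts both sides of (ii) by the same amount $n\,\Phi(0^+)$. The nonincreasing right-derivative then gives
\[
\Phi(x) \;=\; \beta x + \int_{(0,\infty)}\min(x,s)\,d\mu(s),\qquad x\ge 0,
\]
with $\beta=\lim_{x\to\infty}\Phi'(x)\ge 0$ and $\mu=-d\Phi'\ge 0$ a locally finite Borel measure on $(0,\infty)$; this is the fundamental theorem of calculus followed by Tonelli, and the integral is finite because it equals $\Phi(x)-\beta x$. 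Summing over $j$, using $\beta\ge 0$ together with (i) for the linear term and pulling the sum inside the nonnegative integrand, the inequality $\sum_{j=1}^n\Phi(a_j)\le\sum_{j=1}^n\Phi(b_j)$ reduces to $\sum_{j=1}^n\min(a_j,s)\le\sum_{j=1}^n\min(b_j,s)$ for $\mu$-almost every $s$, which is exactly what the previous step supplies.

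The only mildly delicate points are the two harmless normalizations (making $\Phi$ finite at $0$ by the affine extension, and subtracting its value at $0$) and the measure-theoretic bookkeeping in the representation; the genuine content is the ramp identity in the first display, after which everything is routine. I do not anticipate a real obstacle.
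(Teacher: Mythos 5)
Your proof is correct, but it is genuinely different from what the paper does: the paper offers no proof at all, merely citing Hardy--Littlewood--P\'olya \S 3.17 and noting that their convex-increasing/decreasing-sequence formulation converts to the concave-increasing/increasing-sequence one via $\Phi(a) \mapsto -\Phi(-a)$. You instead supply a complete self-contained argument by decomposing a concave increasing $\Phi$ over the extreme rays of that cone, namely the linear function and the ramps $x \mapsto \min(x,c)$, and verifying the inequality on each ramp through the identity $\sum_j (c-x_j)_+ = \max_k \bigl[kc - \sum_{j\le k} x_j\bigr]$ for increasing sequences. All the steps check out: the ramp identity is right (the partial sums are concave in $k$, so the maximum sits at the last index with $x_k \le c$); the comparison of the two maxima uses exactly hypothesis (i); and the integral representation $\Phi(x) = \beta x + \int \min(x,s)\,d\mu(s)$ with $\beta \ge 0$ and $\mu = -d\Phi' \ge 0$ is legitimate after your affine regularization near $0$, which makes $\mu$ a finite measure and costs nothing since all $a_j, b_j \ge \min\{a_1,b_1\}$. (The classical HLP-style proof is usually run by Abel summation against the nonincreasing increments $\Phi(b_j)-\Phi(a_j)$ divided by $b_j-a_j$, which is shorter; your extreme-ray decomposition is longer but more conceptual and isolates exactly which test functions matter.) The one thing your write-up omits that the paper's remark supplies is the dictionary between the two standard formulations, but since you prove the stated version directly, that is not needed.
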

The result is due to Hardy, Littlewood and P\'{o}lya \cite[{\S}3.17]{HLP88}. They treated decreasing sequences $\{ a_j \}$ and $\{ b_j \}$ and a convex increasing function $\Phi$, which is equivalent to \autoref{pr:major} after replacing $\Phi(a)$ with $-\Phi(-a)$. A comprehensive account of majorization methods can be found in the monograph of Marshall, Olkin and Ingram \cite{MO11}. For equality statements in (i) and (ii), including the infinite series case $n=\infty$, see a paper by Laugesen and Morpurgo \cite[Proposition 10]{LM98}.

\section{\bf Orbital and spatial averages}
\label{app:averages}

Equality of orbital and spatial averages on the sphere was needed several times in the paper.
\begin{lemma} \label{le:haar}
\[
\int_{O(d)} f(U\zeta) \, d\gamma(U) = \frac{1}{|\Sp|} \int_\Sp f(\zeta^\prime) \, dS(\zeta^\prime)
\]
for any $f \in L^1(\Sp)$ and each $\zeta \in \Sp$.
\end{lemma}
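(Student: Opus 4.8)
The plan is to exploit the transitivity of the orthogonal group action on the sphere together with the left- and right-invariance of Haar measure. First I would observe that the orbital average on the left does not depend on the base point $\zeta$. Given another point $\zeta^\prime \in \Sp$, choose $V \in O(d)$ with $V\zeta = \zeta^\prime$, and substitute $W = UV$ in
\[
\int_{O(d)} f(U\zeta^\prime) \, d\gamma(U) = \int_{O(d)} f(UV\zeta) \, d\gamma(U) = \int_{O(d)} f(W\zeta) \, d\gamma(W) ,
\]
where the last equality uses that $O(d)$ is compact and hence unimodular, so that $\gamma$ is right-invariant. Thus $\zeta \mapsto \int_{O(d)} f(U\zeta) \, d\gamma(U)$ is constant on $\Sp$; call its value $c$.

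Next I would average this constant against normalized surface measure and interchange the order of integration. Since $c$ is constant in $\zeta$,
\[
c = \frac{1}{|\Sp|} \int_\Sp \Big( \int_{O(d)} f(U\zeta) \, d\gamma(U) \Big) dS(\zeta) = \int_{O(d)} \Big( \frac{1}{|\Sp|} \int_\Sp f(U\zeta) \, dS(\zeta) \Big) d\gamma(U) ,
\]
the swap being justified by Fubini's theorem once one checks joint measurability and absolute integrability of $(U,\zeta) \mapsto f(U\zeta)$. For each fixed $U \in O(d)$ the map $\zeta \mapsto U\zeta$ is an isometry of $\Sp$ and hence preserves $dS$, so the inner integral equals $\frac{1}{|\Sp|} \int_\Sp f(\zeta^\prime) \, dS(\zeta^\prime)$, independently of $U$. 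Therefore $c = \frac{1}{|\Sp|} \int_\Sp f \, dS$, which is the asserted identity.

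I expect the only genuinely technical point to be the Fubini/measurability step when $f$ is merely in $L^1(\Sp)$. This can be handled by first proving the identity for continuous $f$, where every step above is transparent, and then passing to general $f \in L^1(\Sp)$ by density: applying the continuous case to $|f|$ shows $\int_{O(d)} |f(U\zeta)| \, d\gamma(U) = \frac{1}{|\Sp|} \int_\Sp |f| \, dS$, so both sides of the claimed identity are bounded linear functionals of $f$ with respect to the $L^1$-norm and agree on a dense subspace. Everything else is routine.
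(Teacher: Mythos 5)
Your proof is correct and is essentially the paper's argument: both rest on right-invariance of Haar measure (to show the orbital average is independent of the base point), Fubini, and rotation-invariance of surface measure, merely applied in a slightly different order. Your extra care with the measurability/density step for general $f \in L^1(\Sp)$ is a reasonable refinement that the paper passes over silently.
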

\begin{proof}[Proof of \autoref{le:haar}]
The right side of the formula equals $\frac{1}{|\Sp|} \int_\Sp f(U\zeta^\prime) \, dS(\zeta^\prime)$ by a change of variable, for each $U$. Integrating with respect to $U$ gives (by Fubini) that
\[
 \frac{1}{|\Sp|} \int_\Sp f(\zeta^\prime) \, dS(\zeta^\prime) = \frac{1}{|\Sp|} \int_\Sp \int_{O(d)} f(U\zeta^\prime) \, d\gamma(U) dS(\zeta^\prime) .
\]
For each $\zeta^\prime$ we change variable with $U \mapsto UV$, where $V$ is chosen so that $V\zeta^\prime=\zeta$. The lemma follows.
\end{proof}

\newcommand{\doi}[1]{%
 \href{http://dx.doi.org/#1}{doi:#1}}
\newcommand{\arxiv}[1]{%
 \href{http://front.math.ucdavis.edu/#1}{ArXiv:#1}}
\newcommand{\mref}[1]{%
\href{http://www.ams.org/mathscinet-getitem?mr=#1}{#1}}

\end{document}